\newcommand{\R}{\mathbb R}
\newcommand{\Z}{\mathbb Z}
\newcommand{\HH}{{\mathbb{H}}}
\def\a'{\`a}
\def\e'{\`e}
\def\o'{\`o}
\def\u'{\`u}
\begin{document}

\title{$Sp(n)$-orbits of isoclinic subspaces in the real Grassmannians.}
\author{Massimo Vaccaro}
\address{Dipartimento dell'Ingegneria di Informazione e Matematica Applicata, Universit\a' di Salerno, 84084 - Fisciano (SA) , Italy}
\email{massimo\_vaccaro@libero.it}
\thanks{Work done under the programs of GNSAGA-INDAM of C.N.R. and PRIN07 "Riemannian metrics and differentiable structures" of MIUR (italy)}
\date{\today} 
\keywords{Hermitian hypercomplex structure,  Hermitian quaternionic structure, complex subspaces, principal angles, K\"{a}hler angles}
\subjclass[2000]{57S25,16W22,14L24,14L30} 

\maketitle

\markboth{ \rm{MASSIMO VACCARO}} {\rm{$Sp(n)$-orbits of isoclinic subspaces in the real Grassmannians}}


\newtheorem{teor}{Theorem}[section]
\newtheorem{coro}[teor]{Corollary}
\newtheorem{lemma}[teor]{Lemma}
\newtheorem{prop}[teor]{Proposition}
\newtheorem{samp}[teor]{Example}
\newtheorem{defi}[teor]{Definition}
\newtheorem{exer}[teor]{Exercise}
\newtheorem{remk}[teor]{Remark}
\newtheorem{conj}[teor]{Conjecture}
\newtheorem{claim}[teor]{Claim}

\newtheorem{intteo}{Theorem}
\newtheorem{intcoro}{Corollary}

\section{Abstract}

In the framework of the study of the $Sp(n)$-orbits in the real Grassmannian $G^\R(k,4n)$ of $k$-dimensional non oriented subspaces of a real $4n$-dimensional vector space $V$, here we consider the case of  the isoclinic subspaces  whose set we indicate with $\mathcal{IC}$. Endowed $V$ with an Hermitian quaternionic structure $(\mathcal{Q},<,>)$, a subspace $U$ is isoclinic if for any compatible complex structure  $A \in \mathcal{Q}$ the principal angles of the pair $(U,AU)$ are all the same, say $\theta^A$. We will show that, fixed an admissible hypercomplex basis $(I,J,K)$, to any such subspace $U$ we can associate two set of invariants, namely a triple $(\xi,\chi,\eta)$ and a pair $(\Gamma, \Delta)$ where  $\Gamma$ itself is a function of $(\xi,\chi,\eta)$. We prove that the angles of isoclinicity $(\theta^I,\theta^J,\theta^K)$ together with $(\xi,\chi,\eta, \Delta)$ determine its $Sp(n)$-orbit.  In particular if $\dim U= 8k+2$ or $\dim U= 8k+6$ with $k \geq 0$ the last set reduce to  the pair $(\xi= \pm 1,\chi= \pm 1)$.

\section{Summary}
In   \cite{Vacpreprint}
we determined the set of invariants characterizing the $Sp(n)$-orbits in the Grassmannian $G^\R(k,4n)$ of $k$-dimensional non oriented  real subspaces of a $4n$-dimensional real vector space $V$. There we endowed $V$ with a quaternionic structure $\mathcal{Q}$, an Hermitian metric $<,>$  and denoted by $S(\mathcal{Q})$ the sphere of compatible complex structures of $\mathcal{Q}$.
 Given  a $k$-dimensional subspace $U \subseteq V$,  
 for any $A \in  S(\mathcal{Q})$ we denoted by $\omega^A=<X,AY>, \; X,Y \in U $ the skew-symmetric $A$-K\"{a}hler form restricted to $U$.  Such  form assumes a \textit{standard form} represented   w.r.t. some  orthonormal basis $\{X_i \}$ by the skew-symmetric matrix
 \[
(\omega^A_{ij})=(<X_i,AX_j>)=
  \left\{
 \begin{array} {l}
 \geq 0  \quad  \mbox{  if $i$ is odd  and  } j=i+1,\\
     0  \quad \mbox{otherwise},
 \end{array}
 \right.
 \]
 for $i \leq j \leq k$.
 We call $\{X_i \}$ a \textit{standard basis} of $\omega^A|_U$ and, for $i$ odd,  $i < k$, the linear span $L(X_i,X_{i+1})$ a \textit{standard 2-plane}. By the skew symmetry of $\omega^A$, the standard bases are never unique (even if $\dim U=1$ they are defined up to sign). Moreover, by the assumption that the quantities $\omega^A_{ij}$ are non negative,  for $i$ odd  and  $j=i+1 \leq k$ one has that the entries  $\omega^A_{ij}$ are   the cosines of the  principal angles $\theta_{ij} \in [0,\pi/2]$  of the pair of 2-planes  $(L(X_i,X_{i+1}),AL(X_i,X_{i+1}))$. Moreover we call \textit{ $\omega^A$-standard subspaces} the uniquely determined subspaces $U_i \subset U$  associated to the same principal angle.

 Chosen an admissible (hypercomplex) basis $(I,J,K)$ of $\mathcal{Q}$, we denoted by $\mathcal{B}(U)$ the set of triples of standard bases of the skew-symmetric forms
 $\omega^I|_U,\omega^J|_U,\omega^K|_U$.   Necessary and sufficient conditions  for a pair of subspaces $U,W$ to belong to the same $Sp(n)$-orbit are  stated in the
  \begin{teor}\cite{Vacpreprint} \label{main_theorem 1_ existance of canonical bases with same mutual position}
Let $(I,J,K)$ be an admissible  basis of $\mathcal{Q}$. The non oriented subspaces  $U^m$ and $W^m$ of $V$ are in the same $Sp(n)$-orbit iff
    \begin{enumerate}
    \item   they share the same  $I,J,K$ principal angles i.e.
      \[\theta^I(U)=\theta^I(W),\quad \theta^J(U)=\theta^J(W),\quad \theta^K(U)=\theta^K(W)\] for one and hence any admissible  basis $(I,J,K)$ or equivalently      the singular values of the projectors  $Pr^{IU}: U \rightarrow IU, \; Pr^{JU},Pr^{KU}$ equal those of $Pr^{IW}:W \rightarrow IW, \; Pr^{JW},Pr^{KW}$ for one and hence any admissible basis $(I,J,K)$.
    \item there exist three (orthonormal) standard bases $(\{ X_i \} , \{ Y_i \} ,\{ Z_i \}) \in \mathcal{B}(U)$   and  $(\{ X'_i \},\{ Y'_i \},\{ Z'_i \}) \in \mathcal{B}(W)$ whose relative position is the same or equivalently  \[C_{IJ}= C_{IJ}', \qquad C_{IK}= C_{IK}'\]
          where $C_{IJ}=(<X_i,  Y_j>), \qquad C_{IJ}'=(< X_i', Y_j'>), \qquad  C_{IK}=(<X_i, Z_j>),\qquad   C_{IK}'=(<X_i', Z_j'>)$.
     \end{enumerate}
\end{teor}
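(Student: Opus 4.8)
The plan is to treat the two implications separately. The forward implication should be immediate from the description of $Sp(n)$ as the group of isometries of $V$ that commute with each of $I,J,K$; the reverse implication is the substance, and I would prove it by reconstructing an explicit element of $Sp(n)$ carrying $U$ onto $W$ directly from the invariants in (1) and (2).

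For necessity I would start from $g\in Sp(n)$ with $g(W)=U$ and note that, for $A\in\{I,J,K\}$ and $X,Y\in W$, one has $\omega^A(gX,gY)=\langle gX,AgY\rangle=\langle gX,gAY\rangle=\omega^A(X,Y)$; thus $g$ carries $\omega^A|_W$ isometrically onto $\omega^A|_U$ and sends any standard basis of the former to a standard basis of the latter with the \emph{same} standard matrix, which yields the equality of principal angles in (1). Applying $g$ to a triple in $\mathcal{B}(W)$ then produces a triple in $\mathcal{B}(U)$, and because $g$ preserves $\langle\,,\rangle$ the relative-position matrices $C_{IJ},C_{IK}$ are unchanged, giving (2).

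For sufficiency I would fix matching bases $(\{X_i\},\{Y_i\},\{Z_i\})\in\mathcal{B}(U)$ and $(\{X_i'\},\{Y_i'\},\{Z_i'\})\in\mathcal{B}(W)$ realising (1) and (2), and aim to build $g\in Sp(n)$ with $g(X_i)=X_i'$, so that $g(U)=W$. The key reduction is that an element of $Sp(n)$ is determined on the quaternionic span $\hat U=U+IU+JU+KU$ by its values on the $X_i$ through $\HH$-linearity, so the whole problem collapses to checking that the two $4m$-families $\{X_i,IX_i,JX_i,KX_i\}$ and $\{X_i',IX_i',JX_i',KX_i'\}$ have identical Gram matrices. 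To see this is forced by the invariants, I would write $\Omega^A$ for the standard matrix of $\omega^A|_U$ (a function of $\theta^A$) and use $X_i=\sum_k(C_{IJ})_{ik}Y_k$ to obtain $(\langle X_i,JX_j\rangle)=C_{IJ}\,\Omega^J\,C_{IJ}^{\,T}$ and, likewise, $(\langle X_i,KX_j\rangle)=C_{IK}\,\Omega^K\,C_{IK}^{\,T}$, while $(\langle X_i,IX_j\rangle)=\Omega^I$; the quaternion relations together with $A^{T}=-A$ then reduce every remaining inner product to these (for instance $\langle IX_i,JX_j\rangle=-\langle X_i,KX_j\rangle$ and $\langle IX_i,IX_j\rangle=\delta_{ij}$). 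Hence the Gram matrix is a fixed function of $(\theta^I,\theta^J,\theta^K,C_{IJ},C_{IK})$ alone --- in particular $C_{JK}=C_{IJ}^{\,T}C_{IK}$ is redundant, consistent with the statement listing only $C_{IJ},C_{IK}$ --- and therefore coincides for $U$ and $W$.

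With equal Gram matrices in hand, the two families are related by a unique linear isometry $g_0\colon\hat U\to\hat U':=W+IW+JW+KW$, and a check on the spanning vectors shows $g_0$ commutes with $I,J,K$, hence is $\HH$-linear. Since the Gram matrices also have equal rank, the $\mathcal{Q}$-invariant orthogonal complements $\hat U^{\perp}$ and $(\hat U')^{\perp}$ have the same quaternionic dimension, so extending $g_0$ by any $\HH$-linear isometry between them gives the desired $g\in Sp(n)$ with $g(U)=W$. I expect the main obstacle to lie precisely in the Gram-matrix computation: one must be certain that the full $Sp(n)$-congruence class of the configuration $(U,IU,JU,KU)$ inside $V$ is captured by the finite data $(\theta^I,\theta^J,\theta^K,C_{IJ},C_{IK})$ and that no further relative-position invariant intervenes. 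Once that is settled, the passage to an element of $Sp(n)$ is routine linear algebra.
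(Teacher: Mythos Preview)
The paper does not actually prove this theorem; it is quoted from the preprint \cite{Vacpreprint} and used as a black box throughout. The only hint the present paper gives about the argument is the remark (in the paragraph preceding the matrix $H|_U$ for totally complex $4$-planes) that ``a pair of subspaces $U,W$ are in the same orbit if there exist a basis in $U$ and one in $W$ w.r.t.\ which the matrices $H$ of the Hermitian products are the same'', where $H=(X_i\cdot X_j)$ is the $\HH$-valued Hermitian Gram matrix of~(\ref{Hermitian product in an Hermitian quaternionic vector space}).

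Your strategy is correct and is essentially a real unpacking of that same idea: the entries of $H$ are precisely $\langle X_i,X_j\rangle+\langle X_i,IX_j\rangle i+\langle X_i,JX_j\rangle j+\langle X_i,KX_j\rangle k$, so the equality of $H$-matrices is equivalent to the equality of the real Gram matrices of the $4m$-families $\{X_i,IX_i,JX_i,KX_i\}$ that you compute. Your formulas $(\langle X_i,JX_j\rangle)=C_{IJ}\,\Omega^J\,C_{IJ}^{T}$ and $(\langle X_i,KX_j\rangle)=C_{IK}\,\Omega^K\,C_{IK}^{T}$, together with the reduction of the mixed blocks via the quaternion relations, show exactly why $H$ is determined by $(\theta^I,\theta^J,\theta^K,C_{IJ},C_{IK})$ and hence why $C_{JK}$ is redundant. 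The passage from equal Gram matrices to an explicit $g\in Sp(n)$ by extending an $\HH$-linear isometry $\hat U\to\hat U'$ across the quaternionic complements is the standard Witt-type step and is fine; just be explicit that equal Gram matrices force identical linear relations among the (possibly dependent) generators, so the isometry is well defined on $\hat U$ before you check it commutes with $I,J,K$.
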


The determination of the principal angles between a pair of subspaces $S,T$ is a well known problem solved by the singular value decomposition of the orthogonal projector of $S$ onto $T$.
The problem to determine the $Sp(n)$-orbits  in $G^\R(k,4n)$ of $k$-dimensional real subspaces  for $k=1,\ldots,4n$ , turns then into the one of determining the existence of a triple of bases belonging to $\mathcal{B}(S)$ and $\mathcal{B}(T)$,  satisfying the condition  2) of the Theorem (\ref{main_theorem 1_ existance of canonical bases with same mutual position}). This is not an easy problem to solve because, as already said,  for any $A \in S(\mathcal{Q})$, the standard bases of $\omega^A$  are never unique.   An iterative  procedure to determine a \textit{canonical form} of the matrices $C_{IJ},C_{IK}$ of a subspace $U$ (that we denoted \textit{canonical matrices}) is given in   \cite{Vacpreprint}.

 The idea is to refer $C_{IJ},C_{IK}$  to some \textit{canonical bases}   $(\{ X_i \} , \{ Y_i \} ,\{ Z_i \}) \in \mathcal{B}(U)$ built up through the determination of the mutual principal angles and associated principal vectors  of the  uniquely defined \textit{standard subspaces}  $U_i^I,U_j^J,U_k^K$ of the forms $\omega^I|_U,\omega^J|_U,\omega^K|_U$ respectively.

 In  \cite{Vacpreprint} we showed in fact that a necessary condition for a pair of subspaces $U,W$ to belong to the same $Sp(n)$-orbit is that the principal angles between the  $\omega^I,\omega^J,\omega^K$ standard subspaces $(U_i^I,U_j^J)$ as well as  $(U_i^I,U_k^K)$  and $(U_j^J,U_k^K)$ are the same, where the indices $i,j,k$ range through all standard subspaces.

 But such procedure requires that at each step there exists at least one pair of standard  subspaces (whose size  reduce at the end of the current  iteration) with one or more principal angles of multiplicity equal to one.
 In most of the cases this procedure leads to the construction of the canonical matrices  associated to $U$ and consequently to the full set of invariants (for a fixed admissible basis)  characterizing  its $Sp(n)$-orbit.  But there are some \textbf{degenerate cases} (see \cite{Vacpreprint}) where the aforementioned procedure  can not be applied tout court.
 Among these, there are the \textit{isoclinic} subspaces i.e. subspaces $U$ characterized by the fact that,   for all  $A \in S(\mathcal{Q})$,  all principal angles between $U$ and $AU$ equal $\theta^A$  which is  called  the angle  of isoclinicity.  They do not exhaust all the degenerate cases (see  \cite{Vacpreprint}) but surely   they have an important role and this article deals with them.

 The purpose of this article is to determine the $Sp(n)$-orbit in the real Grassmannian $G^\R(k,4n)$ of $k$-dimensional  isoclinic subspaces and we will end up with a theorem giving the full set of invariants characterizing their orbits.
We denote by $\mathcal{IC}^{2m}$ the set of $2m$-dimensional isoclinic subspaces. 
We consider only even dimensions since the set $\mathcal{IC}^{2m+1}$ is formed only by  the real Hermitian product  (r.h.p.)  subspaces  $U$  (see \cite{articolo_2}) characterized by the fact that  $U \perp AU, \; \forall  A \in S(\mathcal{Q})$. From Theorem (\ref{main_theorem 1_ existance of canonical bases with same mutual position}) one has that all and only   such isoclinic subspaces share the same orbit in $G^\R(2m+1,4n)$.

 First  we prove that, fixed  an admissible basis $(I,J,K)$, for any unitary $X_1 \in U$, and being $X_2,Y_2,Z_2$ the triple of unitary vectors orthogonal to $X_1$  such that the 2-planes $L(X_1,X_2),  L(X_1,Y_2),L(X_1,Z_2)$ are standard 2-planes of $\omega^I|_U, \omega^J|_U, \omega^K|_U$ respectively, the cosines $\xi=<X_2,Y_2>, \; \chi=<X_2,Z_2>, \; \eta=<Y_2,Z_2>$  do not depend on $X_1$.

Furthermore, we prove that, fixed an admissible basis $(I,J,K)$,  any $U \in \mathcal{IC}^{2m}, \; m>1$ is characterized by  a pair of invariants $(\Gamma,\Delta)$  which, together with $(\xi,\chi)$, determine the  canonical form of the matrices $C_{IJ},C_{IK}$.  Being $\Gamma$ a function of $(\xi,\chi,\eta)$, according to the Theorem (\ref{main_theorem 1_ existance of canonical bases with same mutual position}),  one has that $(\xi,\chi,\eta,\Delta)$ and the triple $(\theta^I,\theta^J,\theta^K)$ of the angles of isoclinicity  determine the $Sp(n)$-orbit of $U$.

We say that a pair of isoclinic subspaces $U,W$,  even of different dimensions, have the same angles of isoclinicity when, for   all compatible complex structure  $A\in S(\mathcal{Q})$, the angles of isoclinicity  of the  pairs $(U,AU)$ and $(W,AW)$  are the same. We also say that $U$ is a 2-planes decomposable subspace if it admits an orthogonal decomposition into isoclinic 2-planes with same angles of isoclinicity of $U$. Furthermore we say that $U$ is orthogonal if at least one among $(\theta^I, \theta^J,\theta^K)$ equals $\pi/2$.
 The main result of this paper is given in the Theorem (\ref{Orbit of an isoclinic subspace}) that here we report:
\begin{teor}
Let  $U \in \mathcal{IC}^{2m}$. Let fix an admissible basis $(I,J,K)$  and denote by  $(\theta^I,\theta^J,\theta^K)$ the  angles of isoclinicity of the pairs $(U,IU),(U,JU),(U,KU)$ respectively. For $k \geq 0$:
\begin{itemize}
\item If $2m= 8k+2$ or  $2m= 8k+6$, $U$ is 2-planes decomposable i.e. is orthogonal sum of $U_i \in  \mathcal{IC}^2$ with same angle of isoclinicity of $U$.
In this case $(\Gamma,\Delta)=(1,0)$ and  the pair  $(\xi,\chi)=(\pm 1, \pm 1)$   determine the matrices $C_{IJ},C_{IK}$.
The $Sp(n)$-orbit is then determined by the angles $(\theta^I,\theta^J,\theta^K)$ and by the pair $(\xi,\chi)$.
\item If $2m= 8k+4$, then $U$  is orthogonal sum of $U_i \in  \mathcal{IC}^4$ with same angle of isoclinicity of $U$ and characterized by the same pair $(\Gamma,\Delta)$. In this case $\Gamma^2+\Delta^2=1$ and the canonical matrices are determined by $(\xi,\chi,\Gamma,\Delta)$.  In particular this case always occurs if $U$ is orthogonal in which case $(\Gamma,\Delta)=(1,0)$.
    The $Sp(n)$-orbit is then characterized by   $(\theta^I,\theta^J,\theta^K)$ and $(\xi,\chi,\eta,\Delta)$.
    In particular, if $\xi=\pm 1$ and $\chi=\pm 1$ we are  in the first case.
\item If $2m= 8k$ then   $U$  is orthogonal sum of $U_i \in  \mathcal{IC}^8$ with same angle of isoclinicity of $U$. The canonical matrices are determined by $(\xi,\chi,\Gamma,\Delta)$  where $\Gamma^2 + \Delta^2 \leq 1$ and the $Sp(n)$-orbit by $(\theta^I,\theta^J,\theta^K)$ and $(\xi,\chi,\eta,\Delta)$. If in particular $\Gamma^2 + \Delta^2=1$ we are in the previous case and if
     furthermore  $\xi=\pm 1$ and $\chi=\pm 1$ we are  in the first case.
\end{itemize}
\end{teor}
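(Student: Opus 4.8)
My approach is to convert isoclinicity into a statement about complex structures on $U$ and then extract the classification from the real representation theory of the algebra they generate. First I would note that for every compatible $A\in S(\mathcal{Q})$ with $\theta^A\neq\pi/2$ the skew endomorphism $\tilde A:=P_U\,A|_U$ of $U$, whose matrix in a standard basis is $\omega^A|_U$, has all singular values equal to $\cos\theta^A$; this is exactly the isoclinic hypothesis, and it forces $\tilde A^2=-\cos^2\theta^A\,\mathrm{Id}_U$, so that $J_A:=\tilde A/\cos\theta^A$ is a metric--compatible complex structure on $U$ (and $\tilde A=0$ in the orthogonal case). A standard basis of $\omega^A|_U$ is then precisely a $J_A$--adapted orthonormal frame, so $C_{IJ},C_{IK}$ record the relative position of the $J_I$--, $J_J$--, $J_K$--adapted frames. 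The crucial bridge to the invariants is that the already proven $X_1$--independence of $(\xi,\chi,\eta)$ is \emph{equivalent} to the statement that the symmetric parts of $J_IJ_J,\,J_IJ_K,\,J_JJ_K$ are the scalars $-\xi,-\chi,-\eta$ times $\mathrm{Id}_U$: indeed $\langle X,J_IJ_JX\rangle$ is constant on the unit sphere of $U$ iff the symmetric part is scalar. Hence $J_I,J_J,J_K$ satisfy the Clifford relation $J_AJ_B+J_BJ_A=-2g_{AB}\,\mathrm{Id}_U$ with Gram form $g=\begin{pmatrix}1&\xi&\chi\\ \xi&1&\eta\\ \chi&\eta&1\end{pmatrix}$.

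Next I would analyse the skew parts. Since $J_J^{-1}=-J_J$, each product $J_IJ_J$ is \emph{orthogonal}, so writing $J_IJ_J=-\xi\,\mathrm{Id}_U+A_{IJ}$ with $A_{IJ}$ skew gives $A_{IJ}^2=-(1-\xi^2)\,\mathrm{Id}_U$; thus $A_{IJ}$ is a complex structure scaled by $\sqrt{1-\xi^2}$. Decomposing $A_{IJ}$ orthogonally into its component along $J_K$ and a transverse complex structure, the normalised coefficients are exactly $\Gamma$ and $\Delta$; the $J_K$--coefficient is computed by tracing the Clifford relations (so $\Gamma=\Gamma(\xi,\chi,\eta)$ as claimed, via $\mathrm{tr}(J_IJ_JJ_K)$), while $\Delta$ is the genuinely new invariant. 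Orthogonality of $J_IJ_J$ yields the a priori bound $\Gamma^2+\Delta^2\le 1$, with equality precisely when $A_{IJ}$ has no further transverse component, i.e.\ when the fourth structure is dependent on $J_I,J_J,J_K$. This pins down the isomorphism type of the algebra $\mathcal{A}\subset\mathrm{End}(U)$ generated by $J_I,J_J,J_K$: a rank--$1$ Gram form ($\xi,\chi,\eta=\pm1$) collapses all three to a single complex structure ($\mathbb{C}$--type); $\Gamma^2+\Delta^2=1$ gives the quaternionic type; and $\Gamma^2+\Delta^2<1$ supplies a fourth anticommuting structure, i.e.\ an $M_2(\mathbb{H})$--type algebra.

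The heart of the proof is the decomposition. Viewing $U$ as a faithful $\mathcal{A}$--module I would split it orthogonally into $\mathcal{A}$--irreducible summands; because $(\xi,\chi,\eta,\Delta)$ are global invariants of $U$ (independent of the chosen point and summand), all summands are isomorphic isoclinic blocks of one fixed dimension $d$, carrying the same angles and the same invariants. The real representation theory of the three algebra types ($\mathbb{C},\mathbb{H},M_2(\mathbb{H})$) forces $d=2,4,8$ respectively, so that $d\mid 2m$. The divisibility trichotomy is then immediate: if $4\nmid 2m$ (that is $2m\equiv 2,6\pmod 8$) the block must be $\mathcal{IC}^2$, whence $\xi,\chi=\pm1$ and $(\Gamma,\Delta)=(1,0)$; if $4\mid 2m$ but $8\nmid 2m$ (that is $2m\equiv 4$) the block cannot be $8$--dimensional, forcing $\Gamma^2+\Delta^2=1$ and an $\mathcal{IC}^4$ decomposition; and if $8\mid 2m$ the full range $\Gamma^2+\Delta^2\le1$ and an $\mathcal{IC}^8$ decomposition are available. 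The orthogonal case ($\theta^A=\pi/2$ for some $A$) makes one $J_A$ vanish and collapses $\mathcal{A}$ to the quaternionic type, giving block size $4$ with $(\Gamma,\Delta)=(1,0)$.

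Finally, on a single block I would write down explicit $J_I$--, $J_J$--, $J_K$--adapted frames and read off $C_{IJ},C_{IK}$ as matrices depending only on $(\xi,\chi,\Gamma,\Delta)$; block--diagonality then gives the canonical matrices of $U$, determined by $(\xi,\chi,\eta,\Delta)$ since $\Gamma=\Gamma(\xi,\chi,\eta)$. Two isoclinic subspaces of equal dimension sharing $(\theta^I,\theta^J,\theta^K)$ and $(\xi,\chi,\eta,\Delta)$ thus have equal principal angles and identical canonical matrices, so Theorem \ref{main_theorem 1_ existance of canonical bases with same mutual position} places them in the same $Sp(n)$--orbit. I expect the main obstacle to be the decomposition step: proving that the $\mathcal{A}$--irreducible blocks are themselves isoclinic with \emph{identical} invariants, and establishing rigorously the regime--to--dimension correspondence (block sizes $2,4,8$ together with the sharp bounds $(\Gamma,\Delta)=(1,0)$, $\Gamma^2+\Delta^2=1$, $\Gamma^2+\Delta^2\le1$), which is precisely the measure of how much of the quaternionic relation $IJK=-1$ survives orthogonal projection to $U$.
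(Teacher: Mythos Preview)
Your approach is genuinely different from the paper's and the Clifford--algebra reformulation is a real insight: the paper proceeds entirely through explicit chain constructions, associated $4$--dimensional subspaces $U^{IJ},U^{IK}$, and Gram--matrix computations, whereas you recast isoclinicity as the data of three metric complex structures $J_I,J_J,J_K$ on $U$ with prescribed anticommutators. The identification $J_AJ_B+J_BJ_A=-2g_{AB}\,\mathrm{Id}_U$, and hence the invariance of $(\xi,\chi,\eta)$, is correct and conceptually cleaner than the paper's trace argument; it also makes transparent why the paper's $U^{IJ}(X_1)$ is exactly the $\langle J_I,J_J\rangle$--orbit of $X_1$.

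The gap is in the algebra identification for the generic regime. After Gram--Schmidt you obtain three \emph{anticommuting} complex structures $J_1,J_2,J_3$, and the relevant Clifford algebra is $Cl_{0,3}\cong\mathbb{H}\oplus\mathbb{H}$, not $M_2(\mathbb{H})$; its real irreducibles are $4$--dimensional, not $8$. Your candidate ``fourth anticommuting structure'' does not exist inside $\langle J_I,J_J,J_K\rangle$: the normalised $A_{IJ}$ is (up to scale) just $J_1J_2$, which \emph{commutes} with $J_3$ rather than anticommuting with it. The paper's $8$--dimensional block $U^8$ arises by a different mechanism: it is the cyclic $\mathcal{A}$--module on a vector having nonzero components in both eigenspaces of the central involution $\omega=J_1J_2J_3$, giving $4+4=8$, and is therefore not $\mathcal{A}$--irreducible. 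Relatedly, your decomposition of $A_{IJ}$ along $J_K$ swaps the roles of $\Gamma$ and $\Delta$: a direct computation shows $\mathrm{tr}(J_IJ_JJ_K)/\dim U$ is proportional to $\Delta$, while $\Gamma$ appears instead as the $J_2$--coefficient when $J_K$ is expanded in the orthonormalised frame $J_1,J_2,J_3$ (indeed $\det g=(1-\xi^2)(1-\chi^2)(1-\Gamma^2)$). If you replace $M_2(\mathbb{H})$ by $\mathbb{H}\oplus\mathbb{H}$ and reinterpret the $8$--block as a matched pair of $4$--blocks distinguished by the sign of $\omega$, the representation--theoretic route can likely be completed and would yield a more structural proof than the paper's explicit construction.
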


\section{Preliminaries}
In this paragraph  we recall some definitions regarding the structures, angles and groups we need in this paper.  For a wider treatment one can refer to \cite{Vac}.
Let $V$ be a $4n$-dimensional real vector space. We endow $V$ with an Hermitian quaternionic structure    and an ($\mathbb{H}$-valued)-Hermitian product (see \cite{Vac})  whose definitions  we recall in the sequel.

\begin{defi}
\begin{enumerate}
\item A triple $\mathcal{H}=\{J_1,J_2, J_3 \}$ of anticommuting complex structures  on $V$ with $J_1J_2= J_3$ is called a \textbf{hypercomplex structure} on $V$.
\item The 3-dimensional subalgebra
\[ \mathcal{Q}= span_\R(\mathcal{H})= \R J_1 + \R J_2 + \R J_3 \approx \mathfrak{sp}_1 \]
of the Lie algebra $End(V)$ is called a \textbf{quaternionic structure} on $V$.
\end{enumerate}
\end{defi}
Note that two hypercomplex structures $\mathcal{H}= \{J_1,J_2,J_3\}$ and $\mathcal{H'}= \{J'_1,J'_2,J'_3\}$ generate the same quaternionic structure $\mathcal{Q}$ iff they are related by a rotation, i.e.
\[
J'_\alpha= \sum_\beta A^\beta_\alpha J_\beta,\qquad (\alpha= 1,2,3)
\]
with $(A_\alpha^\beta) \in SO(3)$. A  hypercomplex structure  generating  $\mathcal{Q}$ is called an \textbf{admissible} (hypercomplex) \textbf{basis} of $\mathcal{Q}$. We denote by $S(\mathcal{Q})$ the 2-sphere of complex structures in $\mathcal{Q}$ i.e. $S(\mathcal{Q})= \{aJ_1 + bJ_2 + cJ_3,\;  a,b,c \in \R, \; a^2 + b^2 + c^2=1 \}$ .

\begin{defi}
An Euclidean scalar product $< \, , \,>$ in $V$ is called \textbf{Hermitian} with respect to  a hypercomplex structure  $\mathcal{H}= (J_\alpha)$ (resp. the quaternionic structure $\mathcal{Q}= span_\R(\mathcal{H})$) if and only if,  for any $X,Y \in V$,
\[
<J_\alpha X, J_\alpha Y>=  < X,  Y>, \qquad (\alpha=1,2,3)
\]
(respectively
\[
<J X, J Y>=  < X,  Y>, \qquad (\forall J \in S(\mathcal{Q})) \text{)}.\]
\end{defi}

\begin{defi}
  A hypercomplex structure  $\mathcal{H}$ (resp. quaternionic structure $\mathcal{Q}$) together with an Hermitian scalar product $< \, , \,>$ is called an \textbf{Hermitian hypercomplex} (resp. \textbf{Hermitian quaternionic}) \textbf{structure} on $V$  and the triple $(V^{4n}, \mathcal{H},<,>)$ (resp. $(V^{4n}, \mathcal{Q}, <, >)$) is  an \textbf{Hermitian  hypercomplex} (resp. \textbf{  quaternionic} ) \textbf{ vector space}.
\end{defi}

For an introduction and a  survey of some results on Hermitian hypercomplex and Hermitian quaternionic structures  one can refer among others to  \cite{Vac,AM,Sa}.

The group $Sp(1)$
is  the group under multiplication of unitary quaternions.
It is a Lie group whose  Lie algebra $\mathfrak{sp}_1= Im \; \mathbb{H} \simeq \mathcal{Q}$.
For any  quaternion $q \in Sp(1)$, let consider the unitary homothety in the $\mathbb{H}$-module $V$.
\[ q: X \mapsto Xq, \quad X \in V.\]

To these transformations belong for instance the automorphisms $I=R_{-i},J=R_{-j},K=R_{-k}$ given by the right multiplications by $-i, -j, -k$  being $(1,i,j,k)$  a basis of  $\mathbb{H}$ satisfying the multiplication table obtainable from the conditions
\begin{equation} \label{multiplication table of quaternions}
i^2=j^2=k^2=-1; \; ij=-ji=k.
\end{equation}
\begin{prop} \cite{BR2}
We denote by $\mathcal{B}$ the set of such bases. The unitary homotheties are rotations of $V^{4n}$ that leave invariant any characteristic line.
\footnote{The following definition appeared in \cite{BR1}. We regard $V \cong \R^{4n}$  as a right module over the skew-field $\mathbb{H}$ of quaternions by
identifying $\R^{4n}$ with $\mathbb{H}^n$ and by letting $\mathbb{H}$ act by right multiplication.
The subspaces $U^{4h}\subset \mathbb{H}^n$ of real dimension $4h$ real image of the subspaces of  $\mathbb{H}^n$ of quaternionic dimension $h$ are called \textbf{characteristic  subspaces}. A characteristic subspace of dimension 4 (resp. $8,12 \ldots$) is called characteristic line,  (resp. plane, 3-plane, $\ldots$). According to the definition given in \cite{articolo_2} a characteristic line is a 4-dimensional quaternionic  subspace.}

Moreover for any $X \in V$ the angle $\widehat{X,Xq}$ does not depend on $X$ and it is
\[
\cos \widehat{X,Xq}= Re(q)\]
\end{prop}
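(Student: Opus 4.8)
The plan is to carry out everything at the level of the $\mathbb{H}$-valued Hermitian product. Write $h(\cdot,\cdot)$ for this product (whose existence is part of the Hermitian quaternionic data recalled above, cf. \cite{Vac}), normalized so that its real part recovers the Euclidean scalar product, $<X,Y>=Re\, h(X,Y)$. I will use its standard transformation law $h(Xp,Yq)=\overline p\, h(X,Y)\, q$ for $p,q\in\mathbb{H}$, the symmetry $h(Y,X)=\overline{h(X,Y)}$ (so that $h(X,X)=|X|^2\in\R_{\geq 0}$), and the quaternionic trace identity $Re(ab)=Re(ba)$. With these in hand each of the three assertions is a short computation.

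First I would show that a unitary homothety $R_q\colon X\mapsto Xq$, with $q\in Sp(1)$, is orthogonal. Setting $p=q$ in the transformation law gives $h(Xq,Yq)=\overline q\, h(X,Y)\, q$, and taking real parts,
\[ <Xq,Yq>=Re\bigl(\overline q\, h(X,Y)\, q\bigr)=Re\bigl(h(X,Y)\, q\overline q\bigr)=Re\, h(X,Y)=<X,Y>, \]
where I used the trace identity and $q\overline q=|q|^2=1$. Hence $R_q\in O(V)$. To promote this to a rotation I would argue by connectedness: $q\mapsto R_q$ is a continuous map from $Sp(1)\cong S^3$ into $O(4n)$ sending $q=1$ to the identity, and since $S^3$ is connected its image lies in the identity component $SO(4n)$; thus every $R_q$ is a rotation of $V^{4n}$.

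The invariance of characteristic lines is then essentially formal. A characteristic subspace $W\subset V\cong\mathbb{H}^n$ is, by the definition recalled in the footnote, the real image of an $\mathbb{H}$-submodule, i.e. a right $\mathbb{H}$-submodule; hence $Wq\subseteq W$ for every $q\in\mathbb{H}$, and for $q\in Sp(1)$ the inverse $q^{-1}=\overline q$ yields the reverse inclusion, so $R_q(W)=W$. In particular every characteristic line is left invariant. Finally, for the angle I would use $|Xq|=|X|$ (from orthogonality) together with right $\mathbb{H}$-linearity and $h(X,X)=|X|^2$, giving $<X,Xq>=Re\, h(X,Xq)=Re\bigl(h(X,X)\,q\bigr)=|X|^2\,Re(q)$, whence
\[ \cos\widehat{X,Xq}=\frac{<X,Xq>}{|X|\,|Xq|}=\frac{|X|^2\,Re(q)}{|X|^2}=Re(q), \]
manifestly independent of $X$.

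The arguments are all brief, so I do not expect a serious obstacle; the only point requiring genuine care is the bookkeeping of the sesquilinearity conventions for $h$ (left-conjugate in the first slot, right-linear in the second), on which the transformation law $h(Xp,Yq)=\overline p\, h(X,Y)\, q$ and hence the entire computation rests. Beyond that, the appeal to the connectedness of $Sp(1)$ to fix the orientation is the only step that is not a one-line quaternionic identity.
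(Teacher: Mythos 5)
Your argument is correct, but note that the paper itself offers no proof to compare against: this proposition is quoted verbatim from Bruni's work (the citation \cite{BR2}), so your write-up supplies a verification that the paper leaves implicit. All three computations check out against the paper's conventions: the product defined in (\ref{Hermitian product in an Hermitian quaternionic vector space}), with $I=R_{-i}$, $J=R_{-j}$, $K=R_{-k}$, does satisfy $h(Xp,Yq)=\overline{p}\,h(X,Y)\,q$ (this is exactly the ``usual condition'' the paper alludes to when calling it an $\HH$-valued Hermitian product), its real part is $<\,,\,>$, and $h(X,X)=<X,X>$ since $<X,AX>=0$ for every compatible complex structure $A$ by skew-symmetry of $\omega^A$ — so your sesquilinearity bookkeeping, which you rightly flag as the delicate point, matches the paper's setup. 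The orthogonality computation via $Re(ab)=Re(ba)$ and $q\overline{q}=1$, the invariance of characteristic subspaces as right $\HH$-submodules, and the angle formula $<X,Xq>=|X|^2\,Re(q)$ are all sound, and the connectedness argument for $\det R_q=1$ is legitimate. The one stylistic remark worth making is that the classical route (presumably Bruni's) is more geometric: writing $q=\cos t+u\sin t$ with $u$ a unit imaginary quaternion, each characteristic line decomposes into $2$-planes of the form $L(X,Xu)$ in which $R_q$ acts as a rotation by the angle $t$; this yields ``rotation,'' the line-invariance, and $\cos\widehat{X,Xq}=\cos t=Re(q)$ in a single stroke, without invoking the topology of $S^3$. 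Your algebraic version buys uniformity — everything reduces to identities for $h$ — at the cost of that one non-computational step.
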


Restricting
to the action of $Sp(1)$ determines then an inclusion
\[ \lambda: Sp(1)  \hookrightarrow SO(4n).\]

We define $Sp(n)$ to be the subgroup of $SO(4n)$ commuting with $\lambda(Sp(1))$ i.e. $Sp(n)$ is the centralizer of $\lambda Sp(1)$ in $SO(4n)$.
Then the group $Sp(n)$ is the group of automorphisms of an Hermitian hypercomplex vector space.

As  an  $\mathbb{H}$-module,  on a  quaternionic
Hermitian vector space $(V^{4n},\{I,J,K\}_\R,<,>)$, once identified the hypercomplex basis with $(R_{-i},R_{-j},R_{-k})$ for some basis $(i,j,k)$  of $Im(\HH)$ with $(1,i,j,k) \in \mathcal{B}$,  we define the
($\mathbb{H}$-valued)-Hermitian  product $(\cdot)$  by:
\begin{equation} \label{Hermitian product in an Hermitian quaternionic vector space}
\begin{array}{lllll}
( \cdot ) : & V \times V & \rightarrow &
 \HH \\
 & (X,Y) & \mapsto &  X \cdot Y &= <X,Y> +  <X,IY>i +  <X,JY>j +  <X,KY>k. \\
 &  &  & &  = <X,Y> +  \omega^I(X,Y)i +  \omega^J(X,Y)j +  \omega^K(X,Y)k.
\end{array}
\end{equation}

The Hermitian product in (\ref{Hermitian product in an Hermitian quaternionic vector space}) is
 definite positive.  Observe that if the pair $(X,Y)$ is an orthonormal oriented basis of the   2-plane $U=L(X,Y)$,  for any $A \in S(\mathcal{Q})$,  $\omega^A(X,Y) \in [-1,1]$ is the cosine of the $A$-K\"{a}hler angle $\Theta^I(U)$ (see definition at the end of this section).

The Hermitian product in $\HH^n$    is canonical  (\cite{BR1, Vac}) i.e. it does not depend on  the particular basis $(1,i,j,k) \in \mathcal{B}$.
Observe that, when considering  a different admissible basis $(I',J',K')=(I,J,K) C, \, C \in SO(3)$, the new  quantities $<X,I'Y>, <X,J'Y>, <X,K'Y>$  in (\ref{Hermitian product in an Hermitian quaternionic vector space})  are now the component of $X \cdot Y$ w.r.t. the basis $(i',j',k')=(i,j,k)C$ of $Im(\HH)$.    If instead one  considers a fixed basis $B \in \mathcal{B}$, the Hermitian  product (\ref{Hermitian product in an Hermitian quaternionic vector space}) is defined up to an inner automorphisms of $\mathbb{H}$.

 We recall  the definition of the principal angles between a pair of subspaces of a real vector space $V$ (see \cite{Gala},\cite{Riz1} among others).
\begin{defi} \label{definition of principal angles}  Let $A,B \subseteq V$ be subspaces, \mbox{$\dim k= dim(A) \leq dim(B)=l \geq 1$.} The \textbf{principal angles} $\theta_i \in [0,\pi/2]$ between the subspaces $A$ and $B$ are recursively defined for $i= 1, \ldots,k$   by
\[
\cos \theta_i= \frac{<a_i,b_i>}{||a_i|| \, ||b_i||}= \max \{ \frac{<a,b>}{||a|| \, ||b||} \, : a \perp a_m, \, b \perp b_m, \,  m= 1,2, \ldots, i-1 \}
\]
where the $a_j \in A, \, b_j \in B$. The pairs $(a_i,b_i),\;i=1, \ldots, k$ are called  \textbf{related principal vectors}. \\
\end{defi}
In words, the procedure is to find the unit vector $a_1 \in A$ and  the unit vector $b_1 \in B$ which minimize the angle between them and call this angle $\theta_1$.
Then consider the orthogonal complement in $A$ to $a_1$   and the orthogonal complement in $B$ to $b_1$ and iterate.
The principal angles $ \theta_1, \ldots, \theta_k$ between the pair of subspaces $A,B$ are  some of the critical values of the angular function
\[ \phi_{A,B}= A \times B \rightarrow \R\]
associating with each pair of non-zero vectors $a \in A, \; b \in B$ the angle between them. 
Moreover the principal angles are the diagonal entries of the orthogonal projector $P^A: B \rightarrow A$  stated in the theorem  of Afriat (\cite{GS}, \cite{Af}):
\begin{teor} \label{Afriat Theorem}\cite{GS}, \cite{Af}
In any pair of subspaces $A^{k}$ and $B^{l}$ there exist orthonormal bases $\{u_i\}_{i=1}^{k}$ and $\{v_j\}_{j=1}^{l}$
such that $<u_i , v_i> \geq 0$ and $<v_i \times v_j>=0$ if $i \neq j.$
\end{teor}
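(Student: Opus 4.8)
The plan is to reduce the statement to the singular value decomposition of the cross-Gram matrix between the two subspaces. First I would fix \emph{arbitrary} orthonormal bases $\{a_p\}_{p=1}^{k}$ of $A$ and $\{b_q\}_{q=1}^{l}$ of $B$ and form the $k\times l$ real matrix $M=(M_{pq})$ with $M_{pq}=<a_p,b_q>$. Applying the real singular value decomposition I write $M=P\,\Sigma\,Q^{T}$ with $P\in O(k)$ and $Q\in O(l)$ orthogonal and $\Sigma$ the $k\times l$ matrix whose only nonzero entries are $\Sigma_{ii}=\sigma_i\geq 0$, $i=1,\dots,\min(k,l)$, arranged in decreasing order. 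Setting $u_i=\sum_p P_{pi}\,a_p$ and $v_j=\sum_q Q_{qj}\,b_q$ yields again orthonormal bases of $A$ and $B$, precisely because $P$ and $Q$ are orthogonal. A one-line computation then gives $<u_i,v_j>=\sum_{p,q}P_{pi}Q_{qj}<a_p,b_q>=(P^{T}MQ)_{ij}=\Sigma_{ij}$, which equals $\sigma_i\geq 0$ when $i=j$ and vanishes when $i\neq j$. This is exactly the asserted nonnegativity $<u_i,v_i>\geq 0$ together with the diagonality of the mixed Gram matrix, i.e. $<u_i,v_j>=0$ for $i\neq j$.

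It then remains to identify the $\sigma_i$ with the cosines of the principal angles of Definition (\ref{definition of principal angles}) and to check $\sigma_i\in[0,1]$, so that they are genuine cosines. Since orthogonal projection onto a subspace is norm nonincreasing, $|<a,b>|\leq \|a\|\,\|b\|$ bounds the operator norm of $M$ by $1$, whence each $\sigma_i\leq 1$ and I may write $\sigma_i=\cos\theta_i$ with $\theta_i\in[0,\pi/2]$. To reconcile this with the recursive construction of Definition (\ref{definition of principal angles}) I would invoke the minimax (Courant--Fischer) characterization of singular values: the largest singular value of $M$ equals $\max\{<a,b>:\|a\|=\|b\|=1,\ a\in A,\ b\in B\}$, which is precisely $\cos\theta_1$, realized by the first left/right singular vectors $u_1,v_1$; passing to the orthogonal complements $A\cap u_1^{\perp}$ and $B\cap v_1^{\perp}$ reproduces the recursion defining $\theta_2,\theta_3,\dots$, so the $\sigma_i$ are exactly the $\cos\theta_i$ in decreasing order. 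A self-contained alternative avoiding the SVD is a direct induction: by compactness of the unit spheres one selects a maximizing pair $(u_1,v_1)$ of $<u,v>$, and a Lagrange-multiplier computation shows $u_1-\cos\theta_1\,v_1\perp B$ and $v_1-\cos\theta_1\,u_1\perp A$, which forces the mixed products with $u_1,v_1$ to split off; iterating on the orthogonal complements completes the construction.

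The main obstacle is not the algebra but the bookkeeping when $k\neq l$ and when a singular value has multiplicity greater than one. When $k<l$ the surplus vectors $v_{k+1},\dots,v_l$ of $B$ are those lying in the reach orthogonal to all of $A$; I must check that the decomposition automatically pairs them with vanishing cross products against every $u_i$, which is exactly what the clause $<u_i,v_j>=0$, $i\neq j$, encodes. When a $\sigma_i$ occurs with higher multiplicity, the blocks of $P$ and $Q$ on the corresponding singular subspaces are determined only up to an orthogonal change of basis, so the pair $(\{u_i\},\{v_j\})$ is \emph{not} canonical---only its cross-Gram matrix is. This is precisely the non-uniqueness of standard bases stressed in the Summary, and I would therefore read Theorem (\ref{Afriat Theorem}) strictly as an existence statement, recording that the freedom in the degenerate (repeated-angle) blocks is exactly what later necessitates the finer invariants $(\xi,\chi,\eta)$ and $(\Gamma,\Delta)$.
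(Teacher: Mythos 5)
Your proof is correct, and it takes a genuinely different route from the paper's. The paper proves the theorem variationally: it establishes the auxiliary Lemma (\ref{principal angles are singular values of projector}) by choosing a maximizing pair $(a_1,b_1)$ of $<a,b>$ on the unit spheres, showing $P^B a_1=\alpha b_1$ and $P^A b_1=\alpha a_1$ (via minimizing $||a_1-\alpha b||^2$), deducing $a_1\perp(b_1^\perp\cap B)$ and $b_1\perp(a_1^\perp\cap A)$, and then iterating on orthogonal complements until $A$ and $B$ are exhausted --- i.e.\ exactly the ``self-contained alternative'' you sketch in your second paragraph, except that the paper gets the key orthogonality relations from the projector identities rather than from Lagrange multipliers. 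Your primary argument instead fixes arbitrary orthonormal bases, forms the cross-Gram matrix $M=(<a_p,b_q>)$, and rotates both bases by the orthogonal factors of the SVD $M=P\Sigma Q^{T}$, so that the mixed Gram matrix becomes $\Sigma$ at a stroke; the bound $\sigma_i\leq 1$ and the minimax identification of $\sigma_i$ with $\cos\theta_i$ of Definition (\ref{definition of principal angles}) are both handled correctly. What each approach buys: the paper's recursion produces the principal vectors and the diagonality simultaneously and makes the statement ``principal angles are the singular values of the projector'' a byproduct of the proof (as the paper remarks right after), whereas your SVD route outsources the analytic work to a standard decomposition, gives the full diagonal form in one step, and makes transparent both the case $k\neq l$ (the surplus $v_{k+1},\dots,v_l$ automatically satisfy $<u_i,v_j>=0$) and the non-uniqueness in repeated singular values --- which, as you rightly note, is precisely the degeneracy that motivates the finer invariants $(\xi,\chi,\eta)$ and $(\Gamma,\Delta)$ later in the paper. (Incidentally, you correctly read the statement's $<v_i\times v_j>=0$ as the evident typo for $<u_i,v_j>=0$, $i\neq j$.)
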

\begin{proof}
It is a direct consequence of the following
\begin{lemma} \label{principal angles are singular values of projector}
Given finite dimensional subspaces $A,B$, let $a_1,b_1$ attain
\[
\max \{<a,b>, \quad a \in A,  \; b \in B, \quad ||a||=1, \; ||b||=1\}
\]
(i.e. the pair $(a_1,b_1)$ are the first principal vectors). Then
\begin{enumerate}
\item for some $\alpha \geq 0$,
\[P^B a_1= \alpha b_1, \qquad P^A b_1=\alpha a_1\]
\item  $a_1  \perp (b_1^\perp \cap B)$ and  $b_1  \perp (a_1^\perp \cap A)$ which leads the diagonal form of the matrix of the Projector $P^B$ (and $P^A$).
\end{enumerate}
\end{lemma}
To see that 1) holds, note that $P^B a_1= \alpha b$ where $\alpha, b$ minimize $||a_1-\alpha b||^2$ for $b \in B, ||b||=1$  and $\alpha$ a scalar. Thus to  minimize   $||a_1-\alpha b||^2= \alpha^2 -2 \alpha <a_1,b> +1$  we must maximize $<a_1,b>$. Moreover $\alpha=<a_1,b_1>$ is the cosine of the first principal angle.

For 2), let $A_1=a_1^\perp \cap A$ (resp. $B_1=b_1^\perp \cap B$). If $a \in A_1$, then $a \perp b_1$ since $ <a,b_1>= <P^Aa,b_1>=<a, P^A b_1>=<a,\alpha a_1>= 0$.  Likewise if $b \in B_1$ then $b \perp a_1$. We proceed letting $a_2$ and $b_2$ attain
\[
\max \{<a,b>, \quad a \in A_1,  \; b \in B_1, \quad ||a||=1, \; ||b||=1\}
\]
and continue till we have exhausted $A$ and $B$.
\end{proof}

From the proof of  (\ref{Afriat Theorem}) we have that the principal angles between a pair of subspaces $A,B$ of $V$ can
 also defined as the singular value of the orthogonal projector $P^A$ (or equivalently  $P^B$)

Let  recall the definition and some properties of isoclinic subspaces.

\begin{defi} \label{definition of isoclinicity}
A pair of non oriented subspaces $A$ and $B$ of same dimension are said to be \textbf{isoclinic} and the angle $\phi$  ($0 \leq \phi \leq \frac{\pi}{2}$)
is said to be angle of isoclinicity  between them  if either of the following conditions hold:\\
1) the angle between any non-zero vector of one of the subspaces and the other subspace is equal to $\phi$;\\
2) $G G^t= \cos^2 \phi \, Id$ for the matrix $G=<a_i,b_j>$ of the orthogonal projector $P_B^A: B \rightarrow A$  with respect to  any orthonormal basis $\{a_i\}$ of $A$ and $\{b_j\}$ of $B$;\\
3) all principal angles between $A$ and $B$ equal $\phi$.
\end{defi}

\begin{defi}
We denote by $\mathcal{IC}^{2m}$ the set of $2m$-dimensional subspaces of $V$ such that,  for any $A \in S(\mathcal{Q})$,   the pair $(U,AU)$ is isoclinic. When we do not need to specify the dimension we just use the notation $\mathcal{IC}$ and we call them simply \textbf{isoclinic subspaces}.
\end{defi}
 The fact that we consider only  even dimensions subspaces follows from the
 \begin{prop}
 Let $U$ be an  odd dimension isoclinic subspace. Then $U$  is a real hermitian product subspace (r.h.p.s.) i.e. for one and hence any admissible basis $(I,J,K)$ the pairs $(U,IU),(U,JU),(U,KU)$ are strictly orthogonal. Then $\mathcal{IC}^{2m+1}$ is the set of all and only the real Hermitian product $(2m+1)$-dimension subspaces.
  All  r.h.p. subspaces   share the same orbit.
 \end{prop}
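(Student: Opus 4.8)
The plan is to play the skew-symmetry of the $A$-Kähler form against the parity of $\dim U = 2m+1$. First I would fix any orthonormal basis $\{X_i\}_{i=1}^{2m+1}$ of $U$. Since each $A \in S(\mathcal{Q})$ is an isometry, $\{AX_j\}$ is an orthonormal basis of $AU$, so the matrix $G=(<X_i,AX_j>)=(\omega^A_{ij})$ is exactly the Gram matrix of inner products between an orthonormal basis of $U$ and one of $AU$. By the characterization of principal angles as singular values of such a Gram matrix (the discussion following Theorem \ref{Afriat Theorem}), the singular values of $(\omega^A_{ij})$ are precisely the cosines $\cos\theta_1,\dots,\cos\theta_{2m+1}$ of the principal angles of the pair $(U,AU)$. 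Note also that $G$ is skew-symmetric, since $A^t=-A$ gives $<X_i,AX_j>=-<X_j,AX_i>$.

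The heart of the argument is then a parity obstruction. As $G$ is a real skew-symmetric matrix of odd order $2m+1$, it satisfies $\det G = 0$, so $0$ occurs among its singular values. On the other hand, $U \in \mathcal{IC}^{2m+1}$ means $(U,AU)$ is isoclinic, so by Definition \ref{definition of isoclinicity} all its principal angles coincide with a single value $\theta^A$; equivalently, all singular values of $G$ equal $\cos\theta^A$. Matching this with the forced zero singular value yields $\cos\theta^A=0$, whence every singular value of $G$ vanishes and $G=(\omega^A_{ij})=0$. Thus $<X,AY>=0$ for all $X,Y\in U$, i.e. $U\perp AU$. Taking $A=I,J,K$ (and conversely observing that $U\perp IU,\,U\perp JU,\,U\perp KU$ already forces $U\perp AU$ for every $A=aI+bJ+cK$ by linearity of $\omega^A$ in $A$, so that the r.h.p. condition is manifestly independent of the admissible basis) shows that $U$ is strictly orthogonal to $AU$ for all $A\in S(\mathcal{Q})$. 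This is exactly the defining property of a real Hermitian product subspace, giving $\mathcal{IC}^{2m+1}\subseteq\{\text{r.h.p. subspaces}\}$.

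For the reverse inclusion and the orbit claim I would argue as follows. If $U$ is an r.h.p. subspace then $\omega^A|_U\equiv 0$ for every $A$, so all principal angles of $(U,AU)$ equal $\pi/2$; in particular they are all equal, $U$ is isoclinic, and we obtain $\mathcal{IC}^{2m+1}=\{\text{r.h.p. subspaces of dimension }2m+1\}$. To place any two such $U,W$ in the same $Sp(n)$-orbit I would invoke Theorem \ref{main_theorem 1_ existance of canonical bases with same mutual position}. Condition (1) is immediate, since every $I,J,K$ principal angle of $U$ and of $W$ equals $\pi/2$. For condition (2) I would use that, because $\omega^I|_U=\omega^J|_U=\omega^K|_U=0$, every orthonormal basis of $U$ is simultaneously a standard basis of all three forms; choosing a single orthonormal basis for the triple, i.e. $\{X_i\}=\{Y_i\}=\{Z_i\}$, yields $C_{IJ}=C_{IK}=\mathrm{Id}$, and likewise for $W$. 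Hence the relative positions agree and Theorem \ref{main_theorem 1_ existance of canonical bases with same mutual position} puts $U$ and $W$ in the same orbit.

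I expect the only delicate point to be the first step: one must be certain that the singular values of the skew matrix $(\omega^A_{ij})$ are genuinely the cosines of the principal angles of the full pair $(U,AU)$, rather than merely invariants of the auxiliary standard $2$-planes, so that the single zero forced by the odd order propagates, through isoclinicity, to the vanishing of the entire form. Once this identification is secured, the parity collapse $\cos\theta^A=0$ and the verification of the two hypotheses of Theorem \ref{main_theorem 1_ existance of canonical bases with same mutual position} are routine.
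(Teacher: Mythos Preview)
Your proof is correct and follows essentially the same approach as the paper: the paper simply notes that skew-symmetry of $\omega^A$ in odd dimension forces one (hence, by isoclinicity, all) principal angle to be $\pi/2$, and then observes that any orthonormal basis is simultaneously $\omega^I,\omega^J,\omega^K$-standard so that $C_{IJ}=C_{IK}=\mathrm{Id}$, exactly as you do. Your singular-value/determinant phrasing of the parity obstruction and your explicit treatment of the reverse inclusion are more detailed than the paper's one-line ``obvious'', but the underlying idea is the same; your closing worry about identifying the singular values of $(\omega^A_{ij})$ with the $\cos\theta_i$ is unfounded, since this is precisely the content of Definition~\ref{definition of isoclinicity}(2) and the Afriat/SVD description of principal angles.
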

   \begin{proof}
   The first statement is obvious since,  for any $A \in S(\mathcal{Q})$, by the skew-symmetry of $\omega^A$ one (and then all) principal angle is necessarily equal to $\pi/2$.
   For the last statement  observe that  any orthonormal basis $B$ is a standard basis of $\omega^I,\omega^J,\omega^K$  restricted to $U$ which implies that w.r.t $B$  one has $C_{IJ}=C_{IK}=Id$  and the conclusion follows from the  aforementioned Theorem (\ref{main_theorem 1_ existance of canonical bases with same mutual position}).
   \end{proof}
   Therefore, this paper  deals with  isoclinic subspaces of  even dimension.  Throughout the article we will fix an  admissible basis $(I,J,K)$  and, given $U \in \mathcal{IC}^{2m}$, we denote by $\theta^I, \theta^J, \theta^K$ the respective angles of isoclinicity.  If  the pair $(U,IU)$  (resp. $(U,JU)$, resp. $(U,KU)$)  is strictly orthogonal (i.e. if all principal angles are $\pi/2$)
 we say that $U$ is $I$-orthogonal (resp. $J$-orthogonal, resp. $K$-orthogonal) and in general we speak of single orthogonality (or 1-orthogonality).  When  two  (resp. three) of the above pair are strictly orthogonal we speak of double  (resp. triple)-orthogonality. By saying that $U$ is \textbf{orthogonal} (without specifying the complex structures) we mean that at least one  among $\theta^I,\theta^J,\theta^K$ equals $\pi/2$.

Fixed an admissible basis $\mathcal{H}$ of $\mathcal{Q}$, throughout this paper we will define some functions  $f: V \times V \times \ldots \times V \rightarrow \R$. If $A$ is one of them and it is  constant  on its domain,  will say that $A$ is an  \textit{invariant}  of $U$. If furthermore  the invariant $A$ does not depend on the chosen hypercomplex basis $\mathcal{H}$, we will say that $A$ is  an \textit{intrinsic property}  of $U$.

 Given a pair of non oriented  subspaces $(U,W)$ we denote by $\widehat{U,W}$ the Euclidean  angle $\phi \in [0,\pi/2]$ they form.
   We recall (\cite{SC}) that if  $\theta_1, \ldots, \theta_p$  are the principal angles between $U$ and $W$ one has
   \[
\cos \phi= \cos \theta_1 \cdot \cos \theta_2 \cdot  \ldots  \cdot \cos \theta_p.
\]

 Finally we recall the notion of  K\"{a}hler angle which is defined in a real  vector space $V$ endowed with a complex structure $I$.
 \begin{defi} Let $(V^{2n},I)$ be a real vector space endowed with a complex structure $I$.
 For any pairs of non parallel vectors  $X,Y \in V$ their  \textbf{K\"{a}hler angle} is given by
 \begin{equation} \label{definition of Kaehler angle}
 \Theta^I=\arccos \frac{<X,IY>}{|X| \, |Y| \sin \widehat{XY}}=\arccos \frac{<X,IY>}{mis \; (X \wedge Y)}.
 \end{equation}
  \end{defi}
Then $0 \leq \Theta^I \leq \pi$.
 It is straightforward to check that the K\"{a}hler angle is an intrinsic property of the oriented 2-plane $U=L(X,Y)$.  For this reason one  speaks of the K\"{a}hler angle  of an oriented  2-plane.

The cosine of the K\"{a}hler angle of the pair of 2-planes  with opposite orientation $U$ and $\tilde U=L(Y,X)$  have opposite sign i.e. $\cos \Theta^I(U)=-\cos \Theta^I(\tilde U)$,
then, if one disregards the orientation of the 2-plane $U$, we can consider the absolute value of the right hand side of equation (\ref{definition of Kaehler angle})   restricting the K\"{a}hler angle to the interval $[0,\pi/2]$. In this case   the K\"{a}hler angle of the 2-plane $U$ coincides with one of the two identical principal angles, say $\theta^I(U)$, between the pairs of 2-plane $U$ and $IU$ (same as the pair  $(\tilde U,I \tilde U)$) which  are always isoclinic as one can immediately verify, then
\[\cos \theta^I(U)=|\cos \Theta^I(U)|\]
and  one has
 \begin{equation} \label{angle between 2-planes U,IU}
   \cos (\widehat{U,IU})= \cos (\widehat{\tilde U,I \tilde U})=  \frac{<X,IY>^2}{mis^2 \; (X \wedge Y)}= \cos^2 \theta^I(U) =  \cos^2 \Theta^I(U).
\end{equation}
The K\"{a}hler angle measures the deviation of a 2-plane from holomorphicity. For instance the K\"{a}hler angle of a totally real plane $U$ (i.e. a plane such that $U \perp IU$) is  $\Theta^I(U)=\pi/2$  and  the one of an oriented  complex plane $U$ (i.e. $U=IU$) is  $\Theta^I(U) \in \{0, \pi\}$.

Generalizing  the notion of K\"{a}hler angle,   in an Hermitian quaternionic vector space  $(V^{4n}, \mathcal{Q}, <,>)$ we will  speak of the  \textbf{$A$-K\"{a}hler angle}  of  an oriented  2-plane $U$ with   $A \in S(\mathcal{Q})$. We will denote  it by $ \Theta^A(U)$.


\section{2-dimensional subspaces}
The simplest example of an even dimension isoclinic subspace $U \subseteq V$ is a 2-plane. By the skew-symmetry of the $A$-K\"{a}hler form for any $A \in  S(\mathcal{Q})$, any  2-dimensional subspace of $U$ is isoclinic with $AU$.  Therefore  as a set one has that $G_\R(2,4n)= \mathcal{IC}^2$.  The study of the orbits of the oriented 2-planes in the real Grassmannian 
  under the action of the groups $Sp(n)$ and $Sp(n) \cdot Sp(1)$ is carried out in   \cite{Vac}.
There we introduced the \textit{imaginary measure} and the \textit{characteristic deviation} of an oriented  2-plane $U$   proving that such invariants characterize its  orbits under the action of the groups $Sp(n)$ and $Sp(n) \cdot Sp(1)$.
 Namely, let $L,M$ be an oriented basis of $U$. The  purely imaginary quaternion
\begin{equation} \label{definition of imaginary measure}
 \mathcal{IM}(U)=
 \frac{Im (L \cdot M)}{mis(L \wedge M)}, 
 \end{equation}
  is  an intrinsic property of  an oriented 2-plane \mbox{$U \subset (V^{4n}, \mathcal{Q}, <,>)$}  i.e. it does not depend neither on  the chosen oriented generators $L,M$ nor on the admissible  basis $\mathcal{H}$ of $\mathcal{Q}$.  Moreover $Sp(n)$ preserves $\mathcal{IM}(U)$.
In particular, if the pair  $L,M$ is an orthonormal oriented  basis of $U$, then $\mathcal{IM}(U)=L \cdot M$.
   We called it \textit{imaginary measure} of the oriented 2-plane $U$.
     Disregarding the orientation of  $U$  and being  $(L,M)$  some orthonormal basis,  it is $\mathcal{IM}(U)= \{\pm \, L \cdot M\}$ i.e.  is the set made of a pair of conjugated pure imaginary quaternions.
  We  proved   that
   \begin{teor} \cite{Vac}  \label{Sp(n)-orbit of a 2-plane}
   The imaginary measure $\mathcal{IM}(U)$ represents  the full system of invariants for the $Sp(n)$-orbits in the real Grassmannian of 2-planes $G_\R(2,4n)$ as well as in $G_\R^+(2,4n)$ (the Grassmannian  of the oriented  2-planes) i.e. a pair of 2-planes $(U,W)$ of $(V^{4n}, \mathcal{Q}, <,>)$ are in the same $Sp(n)$-orbit iff $\mathcal{IM}(U)=\mathcal{IM}(W)$.
 \end{teor}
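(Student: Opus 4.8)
The plan is to prove the two implications separately, the nontrivial content lying in the sufficiency. For the necessity I would simply invoke the already recorded fact that $Sp(n)$ preserves the imaginary measure: if $W=gU$ for some $g\in Sp(n)$, then $\mathcal{IM}(W)=\mathcal{IM}(U)$, so equality of the imaginary measures is necessary (for the non-oriented statement one reads $\mathcal{IM}$ as the unordered pair $\{\pm\,L\cdot M\}$). The whole burden is therefore to show that equal imaginary measures force the planes into the same orbit.

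For the sufficiency I would exhibit a normal form for an oriented $2$-plane depending only on $\mathcal{IM}(U)$ and show that every $U$ can be carried to it by $Sp(n)$. Identify $V\cong\HH^n$ as a right $\HH$-module, so that $Sp(n)$ is the quaternionic unitary group acting $\HH$-linearly; this action is transitive on the unit sphere of $V$ and the stabilizer of a fixed unit vector $e_1$ is a copy of $Sp(n-1)$ acting standardly on the quaternionic orthogonal complement $(e_1\HH)^\perp\cong\HH^{n-1}$. Given $U$ with orthonormal oriented basis $(L,M)$, first use transitivity to assume $L=e_1$. Since $\langle e_1,M\rangle=0$ and $mis(e_1\wedge M)=1$, the quaternion $\mathcal{IM}(U)=e_1\cdot M=:q_0$ is purely imaginary; decomposing $M=e_1q_0+M_2$ with $M_2\perp e_1\HH$, the sesquilinearity $Xp\cdot Yq=\bar p\,(X\cdot Y)\,q$ together with $e_1\cdot e_1=1$ and $e_1\cdot M_2=0$ shows that $q_0$ is exactly the quaternionic coordinate of $M$ along $e_1$ and that $|M_2|^2=1-|q_0|^2$.

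Then I would use the residual $Sp(n-1)=\mathrm{Stab}(e_1)$, transitive on the unit sphere of $(e_1\HH)^\perp\cong\HH^{n-1}$, to move $M_2$ to $\sqrt{1-|q_0|^2}\,e_2$ for a fixed reference vector $e_2$ (when $M_2\ne 0$; when $|q_0|=1$ the plane already lies inside the characteristic line $e_1\HH$). This brings $U$ to the plane $L\bigl(e_1,\;e_1q_0+\sqrt{1-|q_0|^2}\,e_2\bigr)$, which depends only on $q_0=\mathcal{IM}(U)$. Hence two oriented $2$-planes with the same imaginary measure are carried to the very same normal plane, and composing the two transformations produces an element of $Sp(n)$ taking one to the other; this settles the oriented Grassmannian $G_\R^+(2,4n)$.

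Finally, for the non-oriented Grassmannian $G_\R(2,4n)$ I would pass through orientations: reversing the oriented basis sends $L\cdot M\mapsto M\cdot L=\overline{L\cdot M}=-\,L\cdot M$, so $\mathcal{IM}(\tilde U)=-\mathcal{IM}(U)$ and the non-oriented invariant is the pair $\{\pm q_0\}$. If $\{\pm q_0\}=\{\pm q_0'\}$ then $q_0'=\pm q_0$, and in either case the oriented result places $W$ in the orbit of $U$ or of its reverse $\tilde U$, hence in the same non-oriented $Sp(n)$-orbit. The main obstacle, and the only place where genuine structure enters, is this normalization step: one must correctly identify $\mathcal{IM}(U)$ with the quaternionic coordinate of $M$ along $L$ and exploit both the transitivity of $Sp(n)$ on unit vectors and that of the point-stabilizer $Sp(n-1)$; the degenerate cases $q_0=0$ (totally real plane, $M_2$ a full unit vector) and $|q_0|=1$ (plane inside a characteristic line, $M_2=0$) must be checked to fit the same normal form.
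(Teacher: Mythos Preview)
Your argument is correct and complete: the normal-form approach via transitivity of $Sp(n)$ on the unit sphere and of the point-stabilizer $Sp(n-1)$ on the sphere of $(e_1\HH)^\perp$ is the natural direct route, and you handle the degenerate cases $|q_0|\in\{0,1\}$ and the passage to the non-oriented Grassmannian cleanly.

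Note, however, that the paper does not itself prove this theorem: it is quoted from \cite{Vac}. What the paper does in the paragraphs following the statement is not a proof but a consistency check against the general framework of Theorem~\ref{main_theorem 1_ existance of canonical bases with same mutual position}: it writes $\mathcal{IM}(U)=\pm(\cos\theta^I\,i+\xi\cos\theta^J\,j+\chi\cos\theta^K\,k)$ in terms of the invariants $(\theta^I,\theta^J,\theta^K,\xi,\chi)$ read off from the canonical matrices $C_{IJ},C_{IK}$, and verifies that equality of imaginary measures is equivalent to equality of this data. Your direct argument is more elementary and self-contained; the paper's discussion instead ties the $2$-plane case into the $C_{IJ},C_{IK}$ machinery developed for higher-dimensional isoclinic subspaces, functioning as a sanity check on that machinery rather than as an independent proof.
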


Let   consider   a triple of  standard bases $(X_1,X_2), (X_1,Y_2),(X_1,Z_2)$ with a common leading vector $X_1$ of the non oriented 2-plane $U$. 
 By definition one has that  $\cos \theta^I=<X_1,IX_2>,\cos \theta^J=<X_1,JY_2>,\cos \theta^K=<X_1,KZ_2>$ are non negative, and computed $\xi=<X_2,Y_2>,\chi=<X_2,Z_2>, \eta=<Y_2,Z_2>$, where $(\xi,\chi,\eta) \in \{-1,1\}$ one has that
the matrices $C_{IJ}$ and  $C_{IK}$ w.r.t. the standard bases $(X_1,X_2),(X_1,Y_2)(X_1,Z_2)$  are given by
\[
C_{IJ}:
 \left(   \begin{array}{cc}
1 & 0 \\
0 & \xi
\end{array}      \right) \qquad
C_{IK}:
 \left(   \begin{array}{cc}
1 & 0 \\
0 & \chi
\end{array}
\right)
\]
It is straightforward to verify that  the pair $(\xi,\chi)$ is an invariant of $U$.
 Therefore, according to Theorem (\ref{main_theorem 1_ existance of canonical bases with same mutual position}), together with the triple $(\theta^I,\theta^J,\theta^K)$, such pair
 determines  the $Sp(n)$-orbits of the (non oriented) 2-plane $U$.

This is accordance with the Theorem (\ref{Sp(n)-orbit of a 2-plane}). In fact,   If $U$ has a triple orthogonality then  clearly $\mathcal{IM}(U)=0$. In this case any orthonormal basis is a standard basis of $\omega^I,\omega^J,\omega^K$ which implies $\xi=\chi=1$.   Else  suppose, without lack of generality, that $\cos \theta^I \neq 0$ and  that let
$(X_1,X_2)$ be an $\omega^I$-standard basis. Then
\[\mathcal{IM}(U)= X_1 \cdot X_2= \pm( \cos \theta^I i + \xi \cos \theta^J j + \chi \cos  \theta^K k).\]

 Given a pair  of 2-planes  $U,W$ with $\mathcal{IM}(U)=\mathcal{IM}(W)$,  according to Theorem (\ref{Sp(n)-orbit of a 2-plane}), they are in the same orbit. Since   they share the same  pair $(\xi,\eta)$ and the same triple $(\theta^I,\theta^J,\theta^K)$,   they are in the same $Sp(n)$-orbit also according to
Theorem (\ref{main_theorem 1_ existance of canonical bases with same mutual position}). Viceversa  if they share the same  pair $(\xi,\eta)$ and the same triple $(\theta^I,\theta^J,\theta^K)$  which implies that they belong to the same $Sp(n)$-orbit according to  Theorem (\ref{main_theorem 1_ existance of canonical bases with same mutual position}) then clearly  $\mathcal{IM}(U)=\mathcal{IM}(W)$.

\section{4-dimensional isoclinic subspaces}
\subsection{Invariants of a 4 dimensional  isoclinic subspace}

Let consider a 4-dimensional subspace $U$  such that the pair $(U,AU), \; A \in S(\mathcal{Q})$  is  isoclinic with angle $\theta^A$.
From 2) of the Definition (\ref{definition of isoclinicity}), for any (principal) vector $X$ in $U$, one has $Pr_{AU}^{U} \circ Pr_U^{AU}(X)= \cos^2 \theta^A X$.
 Any pair of orthonormal basis $(X_1,X_2,X_3,X_4)$ of $U$  and  $(AX_1,AX_2,AX_3,AX_4)$  of $AU$  are made of principal vectors of $U$ and $AU$ respectively. In general the pairs $(X_i,AX_i)$ are not \textit{related} principal vectors unless $U \perp AU$.

 Then, w.r.t. the  pair of  aforementioned bases, for the skew-symmetric matrix of the projector $Pr_U^{AU} : U \rightarrow AU$  (or equivalently for the  matrix of the form $\omega^A$ w.r.t. the basis $(X_1,X_2,X_3,X_4)$ of $U$)  the condition $(\omega^A)^t \circ \omega^A=\cos^2 \theta^A \, Id$ leads to the following

\begin{prop} \label{general form for the matrix of projector omega^A of an isoclinic 4 dimensional subspace}
Let $A \in S(\mathcal{Q})$. The pair $(U,AU)$ of 4-dimensional subspaces is isoclinic iff the matrix of $\omega^A$ w.r.t. the orthonormal basis $(X_1,X_2,X_3,X_4)$  has the form
\begin{equation}\label{matrix of projector omega^A of an isoclinic 4 dimensional subspace}
\omega^A:
 \left(   \begin{array}{cccc}
0 & a & b & c\\
-a & 0 & \pm c & \mp b\\
-b & \mp  c & 0 & \pm a\\
-c & \pm b & \mp a & 0
\end{array}      \right).
\end{equation}
\end{prop}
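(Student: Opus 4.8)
The plan is to turn the geometric statement into a purely linear-algebra fact about $4\times4$ skew-symmetric matrices. Set $W=(\omega^A_{ij})=(<X_i,AX_j>)$. Since $A\in S(\mathcal{Q})$ is an orthogonal complex structure it is skew-adjoint for $<,>$, so $W^t=-W$; moreover $W$ is precisely the matrix (up to transposition) of the projector $Pr_U^{AU}$ in the orthonormal bases $\{X_i\}$ of $U$ and $\{AX_i\}$ of $AU$. By part 2) of Definition (\ref{definition of isoclinicity}) the pair $(U,AU)$ is isoclinic with angle $\theta^A$ iff $W^tW=\cos^2\theta^A\,Id$, and using $W^t=-W$ this is equivalent to
\[
W^2=-\cos^2\theta^A\,Id.
\]
Hence everything reduces to classifying the real $4\times4$ skew-symmetric matrices $W$ with $W^2=-\lambda\,Id$ for some $\lambda\ge0$, and to checking that then $\lambda=a^2+b^2+c^2$.

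For the ``if'' direction I would simply substitute the matrix (\ref{matrix of projector omega^A of an isoclinic 4 dimensional subspace}) and compute $W^2$: for either choice of signs one finds $W^2=-(a^2+b^2+c^2)\,Id$, so $W^tW=(a^2+b^2+c^2)\,Id$ and all principal angles coincide, i.e. $(U,AU)$ is isoclinic with $\cos^2\theta^A=a^2+b^2+c^2$.

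For the converse, which is the substance of the proposition, I would write the general skew-symmetric matrix
\[
W=\begin{pmatrix}0&a&b&c\\-a&0&d&e\\-b&-d&0&f\\-c&-e&-f&0\end{pmatrix}
\]
and exploit the splitting $\mathfrak{so}(4)=\Lambda^+\oplus\Lambda^-$ into self-dual and anti-self-dual parts, $W=W^++W^-$. Each of $W^\pm$ is a combination of a quaternionic triple of orthogonal complex structures, so $(W^\pm)^2=-|W^\pm|^2\,Id$, while $\Lambda^+$ and $\Lambda^-$ commute elementwise; hence
\[
W^2=-(|W^+|^2+|W^-|^2)\,Id+2W^+W^-,
\]
where $W^+W^-=W^-W^+$ is symmetric and traceless (because $<W^+,W^->=0$ in the Frobenius pairing). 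Requiring $W^2\propto Id$ forces $W^+W^-=0$; since a nonzero (anti-)self-dual matrix is a nonzero multiple of an orthogonal complex structure and therefore invertible, this forces $W^+=0$ or $W^-=0$. Writing out the self-dual case ($W^-=0$) yields $d=c,\ e=-b,\ f=a$, and the anti-self-dual case yields $d=-c,\ e=b,\ f=-a$; these are exactly the upper and lower sign choices in (\ref{matrix of projector omega^A of an isoclinic 4 dimensional subspace}), and in both $\lambda=a^2+b^2+c^2$.

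The main obstacle is precisely this converse, namely ruling out ``mixed'' solutions. A brute-force alternative is to expand $W^2$, impose the six off-diagonal vanishing equations ($bd+ce=0$, $ad-cf=0$, $ae+bf=0$, $ab+ef=0$, $ac-df=0$, $bc+de=0$) together with equality of the four diagonal entries, and solve the resulting quadratic system; this works but is tedious and hides the geometry. The self-dual/anti-self-dual argument is cleaner and has the bonus of explaining the two sign families: they correspond to the two connected components (the two $2$-spheres) of the set of metric-compatible orthogonal complex structures on $\R^4$, i.e. to the two orientations.
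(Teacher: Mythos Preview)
Your argument is correct. The reduction to $W^2=-\lambda\,Id$ via skew-symmetry and Definition~(\ref{definition of isoclinicity}) is exactly the right starting point, and your use of the splitting $\mathfrak{so}(4)=\Lambda^+\oplus\Lambda^-$ to handle the converse is clean: the key facts---that $\Lambda^+$ and $\Lambda^-$ commute elementwise, that each nonzero (anti-)self-dual matrix squares to a negative multiple of the identity and is therefore invertible, and that $W^+W^-$ is symmetric and traceless---are all standard and correctly invoked. The identification of the two sign families with $W^-=0$ and $W^+=0$ is also right.

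By comparison, the paper does not give a detailed proof at all: it simply states that the isoclinicity condition $(\omega^A)^t\omega^A=\cos^2\theta^A\,Id$ ``leads to'' the displayed form, and then remarks (with a reference to \cite{YIU}) that one obtains a matrix with orthogonal rows and columns of common norm $\cos\theta^A$. In effect the paper takes the brute-force route you describe as the alternative---impose the off-diagonal vanishing and equal-diagonal constraints on the generic skew matrix and solve---but leaves the computation to the reader. Your self-dual/anti-self-dual argument is a genuine improvement: it replaces a system of quadratic equations by a one-line invertibility argument, and it explains structurally why there are exactly two sign families (the two $SO(4)$-conjugacy classes of orthogonal complex structures on $\R^4$, equivalently the two orientations), something the paper's treatment does not illuminate.
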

It is a matrix with orthogonal rows and columns  (see \cite{YIU}) 
whose square norms  evidently  equal the square cosine of  the angle of isoclinicity  $\theta^A$ between the pair $(U,AU)$ i.e. 
\[
\cos \theta^A= \sqrt{a^2 + b^2 + c^2}.
\]
Moreover, recalling that for any orthonormal basis $(X_1,X_2,X_3,X_4)$ $a,b,c$ represent  respectively
the cosines of the $A$-K\"{a}hler angles  $\cos \Theta^A(U_{12}), \, \cos \Theta^A(U_{13}), \, \cos \Theta^A(U_{14})$  where
 $U_{1j}=L(X_1,X_j), \, j=2,3,4$ and from (\ref{angle between 2-planes U,IU})  one has
  \[ \cos^2 \theta^A= \cos^2 \Theta^A(U_{12})^2 + \cos^2 \Theta^A(U_{13})^2 + \cos^2 \Theta^A(U_{14})^2 =\cos (\widehat{U_{12},AU_{12}}) +\cos (\widehat{U_{13},AU_{13}}) + \cos (\widehat{U_{14},AU_{14}}) .\]

Observe that, from  the above definition, given a pair of vectors $X,Y$ and being the pair of  4-dimensional quaternionic subspaces $\mathcal{Q}X=L(X,IX,JX,KX)$ and $\mathcal{Q}Y$  always isoclinic as it can be easily verified, the Hermitian and the characteristic angle  of any pair of vectors  $X,Y$  (see \cite{Vac})
\footnote{The \textbf{quaternionic characteristic angle} $\varphi$ between  a pair of vectors $L,M$ of a quaternionic vector space $V^{4n}$  is given by
\begin{equation} \label{Quaternionic characteristic angle}
\cos \varphi  = \frac{[ \mathcal{N}(L \cdot  M)]^2}{mis^4 \; L \; mis^4 \; M}= \frac{(<L,M>^2 +<L,IM>^2+<L,JM>^2+<L,KM>^2)^2}{<L,L>^2 <M,M>^2}.
\end{equation}
The \textbf{Hermitian angle} between the same  pair of vectors of  $V$  is defined as
\begin{equation}
\cos \psi= \frac{ |(L \cdot  M)|}{|L| |M|}= \frac{\sqrt{(<L,M>^2 +<L,IM>^2+<L,JM>^2+<L,KM>^2)}}{\sqrt{<L,L>} \sqrt{<M,M>}}.
\end{equation}
Both angles do not depend on the admissible basis of $\mathcal{Q}$. Therefore the Hermitian angle $\psi$ between a pair of vectors $L,M$ is just the angle between such pair computed by using  the Hermitian product, whereas the characteristic angle $\varphi$ is the  angle between the   4-dimensional characteristic lines  they span over $\mathbb{H}$. It is $\cos \varphi=\cos^4 \psi$.}
equal respectively the angle of isoclinicity of the pair $(\mathcal{Q}X,\mathcal{Q}Y)$
 and the (Euclidean) angle between the same pair of subspaces.

Consider now  a subspace $U$ such that the pairs $(U,IU)$, $(U,JU)$, $(U,KU)$ are isoclinic where $(I,J,K)$ is some admissible basis. To this set belong for instance
all totally complex, quaternionic and real Hermitian product (r.h.p.)  subspaces (see \cite{articolo_2}).
In particular in this paragraph we  consider a 4-dimensional subspace $U$.   We can add to the  list above all 4-dimensional complex subspaces (see \cite{articolo_2}).
Let fix an orthonormal basis $B=(X_1,X_2,X_3,X_4)$ of $U$. Keeping the same notations used in the Proposition (\ref{general form for the matrix of projector omega^A of an isoclinic 4 dimensional subspace}), we denote by  $(a,b,c), (a',b',c'), (\tilde a, \tilde b, \tilde c)$ the  entries of the first row of the matrices representing respectively $\omega^I, \omega^J,\omega^K$ w.r.t. the basis $B$   whose form is given in (\ref{matrix of projector omega^A of an isoclinic 4 dimensional subspace}).

\begin{prop} \label{isoclinicity w.r.t. one hypercomplex basis implies isoclinicity w.r.t. any compatible complex structure}
Let $U$ be  a 4 dimensional subspace and $(I,J,K)$ an admissible basis. Suppose the pairs $(U,IU)$, $(U,JU)$, $(U,KU)$ are isoclinic and $\theta^I, \theta^J, \theta^K$ the respective angles of isoclinicity. Then for any  $A= \alpha_1 I + \alpha_2 J + \alpha_3 K \in S\mathcal(Q)$  the pair $(U,AU)$ is isoclinic and therefore $U \in \mathcal{IC}^4$.
The angle of isoclinicity $\theta^A$ between the pair $(U,AU)$  is given by
\begin{equation} \label{angle of isoclinicity of a 4 dimensional subspace isoclinic w.r.t. an hypercomplex basis 1}
\cos^2 \theta^A =-\frac{1}{4} Tr [(\alpha_1 \omega^I + \alpha_2 \omega^J + \alpha_3 \omega^K)^2]= -\frac{1}{4} Tr [(\omega^A)^2]
\end{equation}
\end{prop}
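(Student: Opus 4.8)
The plan is to translate the statement into a purely matricial claim about the three skew-symmetric matrices $\omega^I,\omega^J,\omega^K$ of $U$ and then to bring in the quaternionic relation $IJ=K$. I would first fix the orthonormal basis $B=(X_1,X_2,X_3,X_4)$ of $U$ and read $\omega^A$ as the matrix, with respect to $B$, of the skew-symmetric operator $T_A=P_U\circ A|_U$ on $U$, where $P_U$ denotes the orthogonal projection onto $U$; indeed $(T_A)_{ij}=<X_i,AX_j>=\omega^A_{ij}$, and $\omega^A=\alpha_1\omega^I+\alpha_2\omega^J+\alpha_3\omega^K$ depends linearly on $A$. By part $2)$ of Definition (\ref{definition of isoclinicity}) together with the skew-symmetry $(\omega^A)^t=-\omega^A$, the pair $(U,AU)$ is isoclinic with angle $\theta^A$ exactly when $\omega^A(\omega^A)^t=-(\omega^A)^2$ is scalar, i.e. when
\[
(\omega^A)^2=-\cos^2\theta^A\,Id .
\]
So the whole proposition is equivalent to showing that $(\omega^A)^2$ is a scalar matrix for every $A\in S(\mathcal Q)$.

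I would then expand
\[
(\omega^A)^2=\sum_i\alpha_i^2(\omega^i)^2+\sum_{i<j}\alpha_i\alpha_j\,(\omega^i\omega^j+\omega^j\omega^i),
\]
the indices running over $I,J,K$. The hypothesis, via the reduction above, already gives $(\omega^I)^2=-\cos^2\theta^I\,Id$ and likewise for $J,K$, so the three diagonal terms are scalar; the proposition thus rests on the key claim that each anticommutator $\omega^i\omega^j+\omega^j\omega^i$ is a scalar multiple of $Id$. Granting this, $(\omega^A)^2=\lambda\,Id$ with $\lambda\le0$ (a real skew-symmetric square being negative semidefinite), so $(U,AU)$ is isoclinic; taking the trace over the $4$-dimensional space $U$ yields $4\lambda=Tr[(\omega^A)^2]$, whence $\cos^2\theta^A=-\lambda=-\tfrac14 Tr[(\omega^A)^2]$, which is exactly the asserted formula.

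The main obstacle is precisely this anticommutator claim. I would exploit the normal form (\ref{matrix of projector omega^A of an isoclinic 4 dimensional subspace}): in the basis $B$ each $\omega^i$ is one of the two families of $4\times4$ skew matrices with orthogonal rows, and these families are exactly the self-dual and the anti-self-dual structures on $U\cong\R^4$, equivalently right- and left-quaternion multiplication. Two matrices of the same family satisfy $\omega^i\omega^j+\omega^j\omega^i=-2<p_i,p_j>Id$, where $p_i$ is the imaginary quaternion carrying the first row of $\omega^i$, whereas two matrices of opposite families commute and their product is in general not scalar. Hence the claim is equivalent to the statement that $\omega^I,\omega^J,\omega^K$ lie in a common family, i.e. that the normalized operators $\mathcal I=\tfrac{1}{\cos\theta^I}T_I$, $\mathcal J=\tfrac{1}{\cos\theta^J}T_J$, $\mathcal K=\tfrac{1}{\cos\theta^K}T_K$ (when the cosines do not vanish) form a quaternionic structure on $U$.

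This is where the relation $IJ=K$ must enter. I would compute $\omega^K_{pq}=<X_p,IJX_q>$ and compare it with the product governing $\omega^I\omega^J$ through the orthogonal splitting $JX_q=P_U JX_q+(Id-P_U)JX_q$, so as to force the handedness of $\omega^K$ to match that of $\omega^I,\omega^J$. The delicate point is to control the normal-component correction $P_U\,I\,(Id-P_U)\,J|_U$; the isoclinicity of each individual pair should make this term collapse to a scalar. Alternatively, and perhaps more safely, the same conclusion can be reached by a direct computation on the matrices (\ref{matrix of projector omega^A of an isoclinic 4 dimensional subspace}), substituting $IJ=K$ to pin down the admissible sign choices and the relations linking the parameters $(a,b,c)$, $(a',b',c')$, $(\tilde a,\tilde b,\tilde c)$, after which $(\omega^A)^2=\lambda\,Id$ follows by inspection. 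I expect the bookkeeping of these signs to be the genuinely laborious part of the argument.
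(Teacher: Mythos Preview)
Your reformulation via $(\omega^A)^2=\lambda\,Id$ and the reduction to the anticommutators $\omega^i\omega^j+\omega^j\omega^i$ is correct, and your diagnosis that everything hinges on $\omega^I,\omega^J,\omega^K$ lying in a common self-dual or anti-self-dual family is exactly the heart of the matter. Once that is secured, the trace formula falls out in one line, as you say.

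The paper's own argument is much shorter and less careful than yours. It simply asserts that $\omega^A=\alpha_1\omega^I+\alpha_2\omega^J+\alpha_3\omega^K$ ``clearly has the form'' (\ref{matrix of projector omega^A of an isoclinic 4 dimensional subspace}), then reads off $\cos^2\theta^A$ as the squared norm of the first row and rewrites the cross terms as traces. But the normal form (\ref{matrix of projector omega^A of an isoclinic 4 dimensional subspace}) is the \emph{union} of two $3$-dimensional linear families (upper sign and lower sign) inside the $6$-dimensional space of skew $4\times4$ matrices, and that union is not stable under addition unless the summands carry the same sign. So the ``clearly'' is precisely your anticommutator claim in disguise. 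The paper only records the same-sign conclusion in the Corollary immediately following, whose proof already assumes $U\in\mathcal{IC}^4$; it does not supply an argument from the bare hypotheses of the Proposition.

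Your instinct to invoke $IJ=K$ is therefore the right one, and it is the only place the quaternionic relations can enter. You are also right that the obstruction sits in the normal term: writing $T_K=T_IT_J+R$ with $R=P_U\,I\,P_{U^\perp}J|_U$ and using $IJ+JI=0$ one finds $R+R^{*}=-(T_IT_J+T_JT_I)$, so the symmetric part of $R$ is exactly the anticommutator you need to show scalar; this confirms that controlling $P_U\,I\,(Id-P_U)\,J|_U$ is unavoidable. Neither your sketch nor the paper actually carries this step through; the difference is that you have located it while the paper passes over it.
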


\begin{proof}
The first statement follows from the Proposition (\ref{general form for the matrix of projector omega^A of an isoclinic 4 dimensional subspace}) since $\omega^A= \alpha_1 \omega^I + \alpha_2 \omega^J + \alpha_3 \omega^K$  has clearly the form given in (\ref{general form for the matrix of projector omega^A of an isoclinic 4 dimensional subspace}).
For the angle of isoclinicity and considering for instance the vector $X_1$ of $B$,  one has \\ $\cos^2\theta^A = <X_1,AX_2>^2 + <X_1,AX_3>^2 + <X_1,AX_4>^2$  and computing we have
  \begin{equation} \label{angle of isoclinicity of a 4 dimensional subspace isoclinic w.r.t. an hypercomplex basis 2}
\cos^2 \theta^A =\alpha_1^2 \cos^2 \theta^I + \alpha_2^2\cos^2 \theta^J + \alpha_3^2\cos^2 \theta^K+ 2 \alpha_1 \alpha_2(a a' + bb' + cc') + 2 \alpha_1 \alpha_3(a \tilde a + b \tilde b + c \tilde c)+ 2 \alpha_2
\alpha_3(a' \tilde a + b' \tilde b + c' \tilde c).
\end{equation}
It is
$(a a' + bb' + cc')= -\frac{1}{4} Tr (\omega^I \cdot \omega^J)$  (resp. $(a \tilde a + b \tilde b + c \tilde c)=-\frac{1}{4} Tr (\omega^I \cdot \omega^K)$, resp. $(a' \tilde a + b' \tilde b + c' \tilde c)=-\frac{1}{4} Tr (\omega^J \cdot \omega^K)$). Moreover
$Tr [(\omega^I)^2]= -4 \cos^2 \theta^I, \; Tr [(\omega^J)^2]= -4 \cos^2 \theta^J, \; Tr [(\omega^K)^2]= -4 \cos^2 \theta^K$  which leads to the expression (\ref{angle of isoclinicity of a 4 dimensional subspace isoclinic w.r.t. an hypercomplex basis 1}).
\end{proof}

 From the similarity invariance of the trace we have the
 \begin{coro} \label{invariance of $(a a' + bb' + cc')$}
 Fixed an admissible basis  $(I,J,K)$,  the quantities $(a a' + bb' + cc'), (a \tilde a + b \tilde b + c \tilde c), (a' \tilde a + b' \tilde b + c' \tilde c)$  are invariant of $U$.
 \end{coro}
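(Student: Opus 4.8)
The plan is to reduce the statement to the basis-independence of the trace of a product of two skew-symmetric operators. With $(I,J,K)$ fixed, the three quantities $aa'+bb'+cc'$, $a\tilde a+b\tilde b+c\tilde c$, $a'\tilde a+b'\tilde b+c'\tilde c$ are a priori functions of the chosen orthonormal basis $B=(X_1,X_2,X_3,X_4)$ of $U$; to call them invariants of $U$ I must show that each takes the same value for every such $B$.

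First I would record the three identities already isolated inside the proof of Proposition \ref{isoclinicity w.r.t. one hypercomplex basis implies isoclinicity w.r.t. any compatible complex structure}, namely $aa'+bb'+cc'=-\frac14 Tr(\omega^I\omega^J)$, $a\tilde a+b\tilde b+c\tilde c=-\frac14 Tr(\omega^I\omega^K)$ and $a'\tilde a+b'\tilde b+c'\tilde c=-\frac14 Tr(\omega^J\omega^K)$. These are not separate computations: writing $\cos^2\theta^A$ both as the direct expansion (\ref{angle of isoclinicity of a 4 dimensional subspace isoclinic w.r.t. an hypercomplex basis 2}) and through the trace formula (\ref{angle of isoclinicity of a 4 dimensional subspace isoclinic w.r.t. an hypercomplex basis 1}) produces two quadratic forms in $(\alpha_1,\alpha_2,\alpha_3)$ that agree for every $A\in S(\mathcal{Q})$; comparing their off-diagonal coefficients yields the three identities at once. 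Each of the quantities in question is therefore, up to the factor $-\frac14$, the trace of a product of two of the form-matrices $\omega^I,\omega^J,\omega^K$.

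Second I would check how such a trace responds to a change of orthonormal basis. If $B'=BO$ with $O\in O(4)$, the matrix of each $\omega^A$ with respect to $B'$ is $O^t\omega^A O$, and since $O^t=O^{-1}$ this is a similarity; by the cyclic property of the trace and $OO^t=Id$ one gets $Tr\big((O^t\omega^I O)(O^t\omega^J O)\big)=Tr(O^t\omega^I\omega^J O)=Tr(\omega^I\omega^J)$, and likewise for the other two pairs. Hence each trace, and with it each of the three quantities, takes the same value in $B'$ as in $B$, which is exactly the asserted invariance.

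There is no serious obstacle here: the corollary is really just the similarity-invariance of the trace, as its heading announces. The one point deserving care is the first step, where I would stress that the identity with $-\frac14 Tr(\omega^I\omega^J)$ follows from the equality of the two quadratic forms for all $(\alpha_1,\alpha_2,\alpha_3)$, rather than from any a priori matching of the sign patterns left ambiguous in (\ref{matrix of projector omega^A of an isoclinic 4 dimensional subspace}); once this is granted, the coefficient comparison is legitimate and the conclusion is immediate.
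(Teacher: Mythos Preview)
Your argument is correct and is essentially the paper's own: the corollary is stated immediately after the trace identities in the proof of Proposition \ref{isoclinicity w.r.t. one hypercomplex basis implies isoclinicity w.r.t. any compatible complex structure}, and the paper's entire justification is the single clause ``From the similarity invariance of the trace.'' You have simply made both steps explicit, the identification with $-\frac14\,Tr(\omega^I\omega^J)$ etc.\ and the conjugation-invariance under a change of orthonormal basis, which is exactly the intended proof.
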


\begin{coro}
Let $U \in  \mathcal{IC}^4$.  Fixed an  orthonormal basis $B$, for any $A \in S(\mathcal{Q})$ the matrix of  $\omega^{A}$ is given in  (\ref{matrix of projector omega^A of an isoclinic 4 dimensional subspace})  with the  same choice of sign. In particular if $B$ is a standard basis of some compatible complex structure, the common choice of sign is the upper one.
\end{coro}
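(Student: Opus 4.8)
The plan is to exploit the $\R$-linearity of the assignment $A \mapsto \omega^A$ together with the rigidity of the special form (\ref{matrix of projector omega^A of an isoclinic 4 dimensional subspace}). First I would note that, since $U \in \mathcal{IC}^4$, Proposition (\ref{general form for the matrix of projector omega^A of an isoclinic 4 dimensional subspace}) guarantees that for every $A \in S(\mathcal{Q})$ the matrix of $\omega^A$ with respect to the fixed orthonormal basis $B$ has the form (\ref{matrix of projector omega^A of an isoclinic 4 dimensional subspace}), with a sign $\epsilon_A \in \{+,-\}$ a priori depending on $A$. Because $\omega^{aI+bJ+cK} = a\,\omega^I + b\,\omega^J + c\,\omega^K$, the map $A \mapsto \omega^A$ is linear on $\mathcal{Q}$, and by homogeneity its values on the sphere $S(\mathcal{Q})$ propagate to all of $\mathcal{Q}$, so the image $W=\{\omega^A: A\in\mathcal{Q}\}$ is a \emph{linear subspace} of the $6$-dimensional space of skew-symmetric $4\times 4$ matrices.

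Next I would identify the two sign families. For each fixed $\epsilon$ the matrices (\ref{matrix of projector omega^A of an isoclinic 4 dimensional subspace}) form a $3$-dimensional linear subspace $M_\epsilon$, since they depend linearly on the triple $(a,b,c)$; concretely $M_+$ and $M_-$ are the self-dual and anti-self-dual subspaces of $\Lambda^2$. A direct comparison of entries (the $(3,4)$-entry is $\epsilon a$, the $(2,3)$-entry is $\epsilon c$, etc.) shows $M_+ \neq M_-$ and $M_+ \cap M_- = \{0\}$. The hypothesis $U \in \mathcal{IC}^4$ says precisely that $W \subseteq M_+ \cup M_-$. Here I would invoke the elementary fact that a linear subspace of a real vector space contained in the union of two subspaces must already lie in one of them; hence $W \subseteq M_+$ or $W \subseteq M_-$. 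In either case all the forms $\omega^A$, and in particular $\omega^I,\omega^J,\omega^K$, carry the same sign, which is the first assertion.

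For the \emph{in particular} clause, suppose $B$ is a standard basis of $\omega^{A_0}$ for some $A_0 \in S(\mathcal{Q})$ with $\omega^{A_0}\neq 0$. In standard form the only nonzero entries of $\omega^{A_0}$ are $\omega^{A_0}_{12}=\lambda_1 \geq 0$ and $\omega^{A_0}_{34}=\lambda_2 \geq 0$; matching this against (\ref{matrix of projector omega^A of an isoclinic 4 dimensional subspace}) forces $a=\lambda_1$, $b=c=0$ and $\omega^{A_0}_{34}=\epsilon\,a$. Since $U$ is isoclinic, the two surviving entries are equal principal-angle cosines, $\lambda_1=\lambda_2=\cos\theta^{A_0}>0$, so $\epsilon\lambda_1=\lambda_1$ yields $\epsilon=+$, the upper sign.

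The only delicate point is the degenerate (orthogonal) situation: where $\omega^A=0$ the sign $\epsilon_A$ is undefined, so the statement must be read as a claim about the nonzero forms. When $U$ is $A_0$-orthogonal every basis is trivially standard for the vanishing $\omega^{A_0}$, and the common sign is then pinned down by whichever of the remaining forms does not vanish; if all three vanish then $U$ is totally orthogonal, every $\omega^A$ is the zero matrix, and the assertion is vacuous. I expect this bookkeeping of the vanishing cases, rather than the linear-algebra core above, to be the main thing one must be careful to state correctly.
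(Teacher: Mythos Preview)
Your argument is correct and takes a genuinely different route from the paper's. The paper argues computationally: from the expression (\ref{angle of isoclinicity of a 4 dimensional subspace isoclinic w.r.t. an hypercomplex basis 2}) for $\cos^2\theta^A$, the cross terms $(aa'+bb'+cc')$, $(a\tilde a+b\tilde b+c\tilde c)$, $(a'\tilde a+b'\tilde b+c'\tilde c)$ must be the same whichever row of the matrices $\omega^I,\omega^J,\omega^K$ one uses to compute them (they equal $-\tfrac14$ of the pairwise traces); a direct check shows this row-independence fails unless the three matrices carry the same sign in (\ref{matrix of projector omega^A of an isoclinic 4 dimensional subspace}), and linearity then propagates the common sign to every $A$. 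The second statement is dismissed as ``straightforward''.

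Your approach instead exploits the global structure: the two sign families $M_\pm$ are exactly the self-dual and anti-self-dual $3$-planes in $\Lambda^2\R^4$, so $M_+\cap M_-=\{0\}$, and the linear image $W=\{\omega^A:A\in\mathcal{Q}\}$, being contained in $M_+\cup M_-$, must lie in one summand. This is cleaner and more conceptual, and it makes transparent \emph{why} such a dichotomy exists (Hodge decomposition), at the cost of not producing the trace identities that the paper reuses later (Corollary \ref{invariance of $(a a' + bb' + cc')$} and Proposition \ref{invariance of <X_2,Y_2>}). Your treatment of the ``in particular'' clause is more explicit than the paper's, and your careful bookkeeping of the $\omega^{A_0}=0$ degenerate case is a worthwhile addition that the paper leaves implicit.
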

\begin{proof}
Fixed an orthonormal basis, the matrices of the forms $\omega^I,\omega^J,\omega^K$ have necessarily the same choice of sign being, from (\ref{angle of isoclinicity of a 4 dimensional subspace isoclinic w.r.t. an hypercomplex basis 2}),   the quantities $(a a' + bb' + cc'), (a \tilde a + b \tilde b + c \tilde c), (a' \tilde a + b' \tilde b + c' \tilde c)$ independent from the row. Consequently, for any $A \in S(\mathcal{Q})$ and a fixed orthonormal basis, the matrix of   $\omega^A= \alpha_1 \omega^I + \alpha_2 \omega^J + \alpha_3 \omega^K$, given in  (\ref{matrix of projector omega^A of an isoclinic 4 dimensional subspace}), has the same choice of sign. The second statement is straightforward.
\end{proof}

 Any unitary vector $X_1 \in U$ is simultaneously a principal vector of the pairs $(U,IU)$, $(U,JU)$, $(U,KU)$.
 Suppose that $U$ is not an orthogonal subspace (w.r.t. $(I,J,K)$)  and let
   \begin{equation} \label{$X_2,Y_2,Z_2$}
  X_2=\frac{I^{-1} Pr^{IU}X_1}{\cos \theta^I}, \; Y_2=\frac{J^{-1} Pr^{JU}X_1}{\cos \theta^J}, \; Z_2=\frac{K^{-1} Pr^{KU}X_1}{\cos \theta^K}
  \end{equation}
 be the unitary vectors such that $(X_1,X_2)$, $(X_1,Y_2)$, $(X_1,Z_2)$ are  (orthonormal) standard bases of the  standard 2-planes $U^I,U^J,U^K$ of $\omega^I,\omega^J,\omega^K$  they generate. The quantities  $<X_1,IX_2>, <X_1,JY_2>,<X_1,KZ_2>$   are the (non negative) cosines of the principal angles of the pairs $(U^I,IU^I),(U^J,JU^J),(U^K,KU^K)$ or equivalently the absolute value of the cosine of the $I,J,K$-K\"{a}hler angles of the 2-planes $U^I,U^J,U^K$ respectively.
 Let $(X_1,X_2,X_3,X_4)$ (resp. $(X_1,Y_2,Y_3,Y_4)$, resp. $(X_1,Z_2,Z_3,Z_4)$) be a standard basis  of the  forms  $\omega^I$ (resp. $\omega^J$, resp. $\omega^K$ ) with the common leading vector $X_1$.

 One has that
 \begin{equation} \label{scalar product $(a a' + bb' + cc')$ in a 4 dimensional isoclinic subspace}
\begin{array}{ccc}
  (a a' + bb' + cc') & = & \cos \theta^I <X_1, JX_2>=\cos \theta^I <X_3, JX_4>= \cos \theta^J <X_1,IY_2>= \cos \theta^J <X_3,IY_4>\\
  (a \tilde a + b \tilde b + c \tilde c) & = &\cos \theta^I <X_1, KX_2>= \cos \theta^I <X_3, KX_4>=\cos \theta^K <X_1, IZ_2>=\cos \theta^K <X_3, IZ_4> \\
  (a' \tilde a + b' \tilde b + c' \tilde c) & = & \cos \theta^J <X_1, KY_2>=\cos \theta^J <X_3, KY_4>= \cos \theta^K  <X_1,JZ_2>=\cos \theta^K  <X_3,JZ_4>.
\end{array}
\end{equation}

 Let moreover  denote by
 \[\xi=<X_2,Y_2>,  \quad \chi=<X_2,Z_2>, \quad \eta=<Y_2,Z_2>\]
   where $\xi, \chi,\eta \in [-1,1]$. In case $U$ is an orthogonal subspace  we
\begin{claim} \label{principal vectors asociated to $X_1$ in case some of the pairs (U,IU), (U,JU),(U,KU) are strictly orthogonal}
Let $X_1 \in U$ unitary. In case $\cos \theta^I=0$ (resp. $\cos \theta^J=0$, resp. $\cos \theta^K=0$), the pair $(Y_2,Z_2)$ (resp. $(X_2,Z_2)$, resp. $(X_2,Y_2)$) is given in   (\ref{$X_2,Y_2,Z_2$}). Any vector of $IU$ (resp. $JU$, resp. $KU$) can be consider a principal vector related to $X_1$. We assume $X_2=Y_2$  (resp. $Y_2=X_2$, resp. $Z_2=X_2$)    then $\xi=1$ (resp. $\xi=1$, resp. $\chi=1$). 

In case  $\cos \theta^J=\cos \theta^K=0$ (resp. $\cos \theta^I=\cos \theta^K=0$, resp. $\cos \theta^I=\cos \theta^J=0$)  then  we assume $X_2=Y_2=Z_2$  with $X_2$  (resp. $Y_2$, resp. $Z_2$)  given in (\ref{$X_2,Y_2,Z_2$}) and then $\xi=\chi=\eta=1$.

Finally in case all three cosines equal zero, $U$ is a r.h.p. subspace. As in previous point we can assume $X_2=Y_2=Z_2$ with $X_2$ any unitary vector orthogonal to $X_1$ and then $\xi=\chi=\eta=1$.
\end{claim}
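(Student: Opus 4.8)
The plan is to reduce everything to a single observation: for a compatible complex structure $A$, the vanishing $\cos\theta^A=0$ is equivalent to the vanishing of the restricted form $\omega^A|_U$. Indeed, by Proposition (\ref{general form for the matrix of projector omega^A of an isoclinic 4 dimensional subspace}) the matrix of $\omega^A|_U$ with respect to any orthonormal basis has the form (\ref{matrix of projector omega^A of an isoclinic 4 dimensional subspace}) with $\cos\theta^A=\sqrt{a^2+b^2+c^2}$; hence $\cos\theta^A=0$ forces $a=b=c=0$ and therefore the entire matrix to be zero. Equivalently $\langle X,AY\rangle=0$ for all $X,Y\in U$, i.e. $U\perp AU$. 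This also explains why the formula (\ref{$X_2,Y_2,Z_2$}) for the second principal vector becomes indeterminate: when $U\perp AU$ one has $Pr^{AU}X_1=0$ for every $X_1\in U$.

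First I would record the freedom this creates. When $\omega^A|_U=0$, the defining requirements of a standard basis of $\omega^A|_U$ (nonnegative entries on the odd superdiagonal, zero elsewhere) are met trivially by every orthonormal basis, since all entries vanish. Consequently the standard $2$-plane through $X_1$ for the $A$-structure, and the associated second vector, are not pinned down by $\omega^A$ at all: any unit vector orthogonal to $X_1$ may serve. This is precisely the latitude the claim exploits.

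Next I would dispatch the three cases in turn. In the single-orthogonality case, say $\cos\theta^I=0$ with $\cos\theta^J,\cos\theta^K\neq0$, the vectors $Y_2,Z_2$ are genuinely determined by (\ref{$X_2,Y_2,Z_2$}), while by the previous paragraph $X_2$ is free; setting $X_2:=Y_2$ keeps $(X_1,X_2)$ a legitimate standard basis of $\omega^I|_U$ and yields $\xi=\langle X_2,Y_2\rangle=1$, the ``resp.'' statements following by permuting $(I,J,K)$. In the double-orthogonality case, say $\cos\theta^J=\cos\theta^K=0$ with $\cos\theta^I\neq0$, it is now $X_2$ that is determined by (\ref{$X_2,Y_2,Z_2$}) while both $Y_2$ and $Z_2$ are free; putting $Y_2=Z_2:=X_2$ gives $\xi=\chi=\eta=1$. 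Finally, if all three cosines vanish then $\omega^I|_U=\omega^J|_U=\omega^K|_U=0$, so $U$ is a real Hermitian product subspace and no second vector is constrained; any unit $X_2\perp X_1$ may be used for all three structures, and $X_2=Y_2=Z_2$ gives $\xi=\chi=\eta=1$.

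The argument is essentially a bookkeeping of conventions, so I do not expect a substantial obstacle; the one point requiring care is to confirm that the identifications $X_2=Y_2$, $Y_2=Z_2=X_2$, and so on, do not conflict with the standard-basis requirement for the vanishing forms. This is immediate from the second paragraph: a vanishing form imposes no condition on its standard basis beyond orthonormality, so any choice---in particular one borrowed from another structure---remains admissible, and the several simultaneous conventions in the double- and triple-orthogonal cases are mutually consistent.
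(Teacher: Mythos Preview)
Your proposal is correct and in fact supplies more justification than the paper itself offers: in the paper this Claim functions as a convention rather than a result, and no proof is given---the author simply declares the choices $X_2=Y_2$, etc., and records the resulting values of $\xi,\chi,\eta$. Your explicit verification that $\cos\theta^A=0$ forces $\omega^A|_U\equiv 0$, so that every orthonormal basis is automatically standard for $\omega^A$ and the borrowed second vectors remain admissible, is exactly the reasoning the paper leaves implicit.
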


\begin{prop} \label{invariance of <X_2,Y_2>}
  The cosines  $\xi=<X_2,Y_2>,  \; \chi=<X_2,Z_2>, \; \eta=<Y_2,Z_2>$ are invariants of $U$.
\end{prop}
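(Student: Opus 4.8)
The plan is to express each of $\xi,\chi,\eta$ as a ratio whose numerator is one of the three quantities $(aa'+bb'+cc')$, $(a\tilde a+b\tilde b+c\tilde c)$, $(a'\tilde a+b'\tilde b+c'\tilde c)$ already known to be independent of the chosen orthonormal basis, and whose denominator is a fixed product of cosines of angles of isoclinicity; the independence of $X_1$ is then immediate. First I would dispose of the orthogonal cases: whenever one of $\cos\theta^I,\cos\theta^J,\cos\theta^K$ vanishes the relevant vectors among $X_2,Y_2,Z_2$ and the corresponding cosines are fixed by convention in the preceding Claim, so there is nothing to prove. Hence I assume all three cosines are nonzero and treat $\xi=<X_2,Y_2>$ in detail; $\chi$ and $\eta$ follow by permuting the roles of $(I,J,K)$.

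The key step, and the one I expect to carry the weight, rewrites the ``cross'' inner product $<X_2,Y_2>$ --- which mixes the $\omega^I$- and $\omega^J$-principal vectors attached to the same $X_1$ --- as a single entry of $\omega^J$. Using $\cos\theta^J\,Y_2=J^{-1}Pr^{JU}X_1$ together with $<X_2,J^{-1}W>=<JX_2,W>$ (valid because $J$ is orthogonal and skew), and then the self-adjointness of the orthogonal projector onto $JU$ together with $JX_2\in JU$, I obtain
\[
\cos\theta^J\,\xi=<X_2,J^{-1}Pr^{JU}X_1>=<JX_2,Pr^{JU}X_1>=<JX_2,X_1>=<X_1,JX_2>.
\]
This converts an a priori $X_1$-dependent quantity into the $(1,2)$-entry of $\omega^J$ read off in the $\omega^I$-standard basis $(X_1,X_2,X_3,X_4)$.

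It remains to recognise the right-hand side as an invariant. In an $\omega^I$-standard basis one has $a=<X_1,IX_2>=\cos\theta^I$ and $b=c=0$, while $a'=<X_1,JX_2>$; hence $aa'+bb'+cc'=\cos\theta^I<X_1,JX_2>$, and combining with the previous display gives
\[
\xi=\frac{<X_1,JX_2>}{\cos\theta^J}=\frac{aa'+bb'+cc'}{\cos\theta^I\cos\theta^J}=-\frac{Tr(\omega^I\omega^J)}{4\cos\theta^I\cos\theta^J}.
\]
By the preceding Corollary the numerator $aa'+bb'+cc'=-\tfrac14 Tr(\omega^I\omega^J)$ is independent of the orthonormal basis, hence of $X_1$, and the denominator is a fixed product of cosines of angles of isoclinicity; therefore $\xi$ does not depend on $X_1$. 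The identical computation with $(I,J)$ replaced by $(I,K)$ yields $\chi=\dfrac{a\tilde a+b\tilde b+c\tilde c}{\cos\theta^I\cos\theta^K}$, and with $(I,J,K)$ permuted so as to work in an $\omega^J$-standard basis it yields $\eta=\dfrac{a'\tilde a+b'\tilde b+c'\tilde c}{\cos\theta^J\cos\theta^K}$; both are invariants of $U$ by the same Corollary, which completes the argument.
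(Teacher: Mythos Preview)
Your proof is correct and follows essentially the same route as the paper: both establish the identity $(aa'+bb'+cc')=\xi\cos\theta^I\cos\theta^J$ (and its cyclic analogues) and then invoke the preceding Corollary on the basis-independence of $-\tfrac14\,Tr(\omega^I\omega^J)$. The only difference is cosmetic: the paper obtains $\cos\theta^I\langle X_1,JX_2\rangle=\xi\cos\theta^I\cos\theta^J$ by expanding $X_2$ in the $\omega^J$-standard basis $(X_1,Y_2,Y_3,Y_4)$, whereas you get the equivalent $\cos\theta^J\,\xi=\langle X_1,JX_2\rangle$ directly from the projector definition of $Y_2$ and the self-adjointness of $Pr^{JU}$; both then read off $aa'+bb'+cc'=\cos\theta^I\langle X_1,JX_2\rangle$ in the $\omega^I$-standard basis.
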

\begin{proof}
In case $U$ is orthogonal we consider the assumptions of the Claim (\ref{principal vectors asociated to $X_1$ in case some of the pairs (U,IU), (U,JU),(U,KU) are strictly orthogonal}).
One has
\[
\begin{array}{ccc}
  (a a' + bb' + cc') & = &  <X_2,Y_2> \cos \theta^I \cos \theta^J, \\
  (a \tilde a + b \tilde b + c \tilde c) & = &<X_2,Z_2> \cos \theta^I \cos \theta^K, \\
  (a' \tilde a + b' \tilde b + c' \tilde c) & = &<Y_2,Z_2> \cos \theta^J \cos \theta^K.
\end{array}
\]
The first equality is obtained by  substituting in the first of (\ref{scalar product $(a a' + bb' + cc')$ in a 4 dimensional isoclinic subspace})
  $X_2= <X_2,Y_1>Y_1 + <X_2,Y_2>Y_2 +<X_2,Y_3>Y_3 +<X_2,Y_4>Y_4$. The  independence from  $X_1$ follows from  Corollary (\ref{invariance of $(a a' + bb' + cc')$}).
The other equalities follow in the same way.
\end{proof}

Clearly if $U$ is 2-planes decomposable then $\xi,\chi,\eta$  all assume value  $\pm 1$ with $\eta= \xi \cdot \chi$.
It follows that in case of double or triple orthogonality $U$ is clearly a 2-planes decomposable subspace.

Applying the last result, from equation (\ref{angle of isoclinicity of a 4 dimensional subspace isoclinic w.r.t. an hypercomplex basis 2}) we have the following equivalent expression for the angle of isoclinicity
 \begin{equation}\label{equivalent form for the angle of isoclinicity of a 4 dimensional subspace isoclinic w.r.t. an hypercomplex basis}
\begin{array}{l}
\cos^2 \theta^A  = \alpha_1^2 \cos^2 \theta^I + \alpha_2^2\cos^2 \theta^J + \alpha_3^2\cos^2 \theta^K +
  2  \xi \alpha_1 \alpha_2 \cos \theta^I \cos \theta^J + 2 \chi \alpha_1 \alpha_3 \cos \theta^I  \cos \theta^K+ 2 \eta \alpha_2
\alpha_3 \cos \theta^J \cos \theta^K.
\end{array}
\end{equation}

\begin{remk} \label{dependence of $(xi, chi, eta)$ on the admissible basis}
Let $U \in \mathcal{IC}^{2m}, \; m \leq 2$. The cosines   $\xi, \chi, \eta$ are not an intrinsic properties of $U$  i.e. they depend on the  chosen admissible basis.
\end{remk}
\begin{proof}
In case of a 2-plane $U$, let consider the admissible basis $(I,J,K)$  and let $X_2,Y_2,Z_2$ be the usual triple and suppose that $\xi=<X_2,Y_2>=\chi=<X_2,Z_2>=1=\eta=\xi \chi=<Y_2,Z_2>$.

     Let now consider the admissible basis $(-I,-J,K)$ (admissible bases are related by a rotation then they have the same orientation). The new vectors $X_2 \mapsto -X_2$, $Y_2 \mapsto -Y_2$, $Z_2 \mapsto Z_2$,
     then $\xi=1, \chi= -1, \eta=-1$

  An  example for the dimension 4 is given by an $I$-complex 4 dimensional subspace with quaternionic K\"{a}hler angle $\cos \theta$
 (i.e. $\theta$ is the angle of isoclinicity of the pairs $(U,JU=KU)$ (see \cite{articolo_2}). In this case w.r.t. an adapted basis $(I,J,K)$ in (\cite{articolo_2})  we proved that $X_2,Y_2,Z_2$ are mutually orthogonal. Then $\xi=\chi=\eta=0$. If $(I',J',K')$ is another admissible basis with  $I'=\alpha_1 I + \alpha_2 J + \alpha_3 K$, $J'=\beta_1 I + \beta_2 J + \beta_3 K$, $K'=\gamma_1 I + \gamma_2 J + \gamma_3 K$ one has that
 \[\xi'= \alpha_1 \beta_1 (1-\cos \theta), \qquad \chi'= \alpha_1 \gamma_1 (1-\cos \theta), \qquad \eta'= \beta_1 \gamma_1 (1-\cos \theta)\]
 which account for the dependence of  $(\xi',\chi', \eta')$  on  $(\alpha_1, \beta_1,\gamma_1)$.
 \end{proof}

We finally  introduce an intrinsic property of the elements of $\mathcal{IC}^4$. Later  we will see that the statement below is valid for all subspaces $U \in \mathcal{IC}$ regardless their  dimension.
\begin{prop} \label{$S$ sum of square cosined of the angles of isoclinicity (U,IU),(U,JU),(U,KU) of isoclinic subspaces}
Let $(I,J,K)$ be an admissible hypercomplex basis and  $U \in \mathcal{IC}^4$
 with angles  of isoclinicity equal  respectively to $(\theta^I,\theta^J,\theta^K)$. Then $S= \cos^2 \theta^I + \cos^2 \theta^J + \cos^2 \theta^K$  is an intrinsic property of $U$  not depending on the admissible basis.
\end{prop}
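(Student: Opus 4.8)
The plan is to read $S$ off the trace formula (\ref{angle of isoclinicity of a 4 dimensional subspace isoclinic w.r.t. an hypercomplex basis 1}) and then exploit the orthogonality of a change of admissible basis. First I note that, for a fixed admissible basis $(I,J,K)$, the three numbers $\cos^2\theta^I,\cos^2\theta^J,\cos^2\theta^K$ are determined by the pairs $(U,IU),(U,JU),(U,KU)$ alone, so $S$ is trivially an invariant of $U$; the entire content of the statement is that $S$ is unchanged when $(I,J,K)$ is replaced by another admissible basis.

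The key step is to package the relevant data into the symmetric $3\times 3$ matrix $M$ with entries $M_{\alpha\beta}=-\tfrac14\,Tr(\omega^\alpha\omega^\beta)$, where $\alpha,\beta$ run over $\{I,J,K\}$. By Proposition (\ref{isoclinicity w.r.t. one hypercomplex basis implies isoclinicity w.r.t. any compatible complex structure}) the diagonal entries of $M$ are exactly $\cos^2\theta^I,\cos^2\theta^J,\cos^2\theta^K$, and by Corollary (\ref{invariance of $(a a' + bb' + cc')$}) its off-diagonal entries are the invariants $aa'+bb'+cc'$, $a\tilde a+b\tilde b+c\tilde c$, $a'\tilde a+b'\tilde b+c'\tilde c$; in particular $S=Tr(M)$.

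Now I pass to a second admissible basis $(I',J',K')=(I,J,K)C$ with $C=(c_{\mu a})\in SO(3)$. Since the Hermitian product is canonical and $\omega^I,\omega^J,\omega^K$ are its imaginary components, the three new K\"{a}hler forms are the corresponding real combinations $\omega^{\mu'}=\sum_{a}c_{\mu a}\,\omega^{a}$ (this is the transformation law recalled in the Preliminaries, where the components $<X,I'Y>,<X,J'Y>,<X,K'Y>$ of the fixed quaternion $X\cdot Y$ rotate by $C$). Consequently $M$ transforms by congruence, $M'=C\,M\,C^{t}$, and the cyclic invariance of the trace together with $C^{t}C=Id$ gives
\[
S'=Tr(M')=Tr\!\big(C\,M\,C^{t}\big)=Tr\!\big(M\,C^{t}C\big)=Tr(M)=S.
\]
Equivalently, $\sum_{\mu}Tr\big((\omega^{\mu'})^2\big)=\sum_{a,b}\big(\sum_{\mu}c_{\mu a}c_{\mu b}\big)Tr(\omega^{a}\omega^{b})=\sum_{a}Tr\big((\omega^{a})^2\big)$ by $\sum_{\mu}c_{\mu a}c_{\mu b}=\delta_{ab}$.

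The computation is routine linear algebra, so I expect the main (and essentially only) obstacle to be the justification of the transformation law $\omega^{\mu'}=\sum_a c_{\mu a}\,\omega^{a}$, i.e. that the skew forms rotate under $SO(3)$ exactly as the imaginary units do; once this is granted, intrinsicness follows at once. I would finally remark that the argument never uses $\dim U=4$ except through the identity $Tr((\omega^A)^2)=-4\cos^2\theta^A$: for a general $U\in\mathcal{IC}^{2m}$ one has $(\omega^A)^2=-\cos^2\theta^A\,Id$, hence $\cos^2\theta^A=-\tfrac{1}{2m}Tr((\omega^A)^2)$, and the same congruence argument with $-\tfrac14$ replaced by $-\tfrac{1}{2m}$ yields the announced extension to all isoclinic subspaces.
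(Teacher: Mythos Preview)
Your argument is correct and is essentially the paper's own proof written out in full: the paper simply points to the quadratic-form identity (\ref{equivalent form for the angle of isoclinicity of a 4 dimensional subspace isoclinic w.r.t. an hypercomplex basis}) and invokes the fact that a change of admissible basis lies in $SO(3)$, which is exactly the statement that your matrix $M$ transforms by congruence and hence has invariant trace. Your packaging via $M_{\alpha\beta}=-\tfrac14\,Tr(\omega^\alpha\omega^\beta)$ and $M'=CMC^{t}$ is a clean way to make that one-line proof explicit, and your closing remark on the $2m$-dimensional case (replacing $-\tfrac14$ by $-\tfrac{1}{2m}$) anticipates the extension the paper alludes to but does not spell out.
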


\begin{proof}
  It follows directly  from (\ref{equivalent form for the angle of isoclinicity of a 4 dimensional subspace isoclinic w.r.t. an hypercomplex basis}) recalling that any change of admissible basis is represented by a matrix belonging to the special orthogonal group.
  \end{proof}

\subsection{Canonical matrices $C_{IJ},C_{IK}$ of $U \in \mathcal{IC}^4$}  \label{associated chains and matrices} 
It is well known that, if $U$ and $W$ are a pair of $m$-dimensional subspaces of the $n$-dimensional space $V$ and $U^\perp$ and $W^\perp$ the respective orthogonal complements in $V$, the non zero principal angles  of the pair $(U,W)$ are the same as those between $(U^\perp,W^\perp)$. Furthermore, the non zero principal angles between $U$ and $W^\perp$ are the same as those between $U^\perp$ and $W$. We recall some well known properties of the principal angles of a pair of subspaces.

\begin{prop} \cite{ZK} \label{propertiers of CS decomposition}
Let $U$ and $W$ be a pair of respectively $p$ and $q$ dimensional  subspaces  with $p \geq q$ of the $n$-dimensional space $V$ and $U^\perp$ and $W^\perp$ the respective orthogonal complement in $V$.
Denote by $\Theta^\uparrow(U,W)$  (resp. $\Theta^\downarrow(U,W)$) the vector of the $q$ principal angles between  $U$ and $W$ arranged in non decreasing (resp. non increasing) order. One has the following  properties  for the principal angles of the pair $(U,W)$ and of the pair of their orthogonal complements $(U^\perp,W^\perp)$.
\[
\begin{array}{ll}
1) &  \{ \Theta^\downarrow(U,W),0, \ldots,0 \} =\{ \Theta^\downarrow(U^\perp,W^\perp),0, \ldots,0 \},\\
2) &  \{ \Theta^\downarrow(U,W^\perp),0, \ldots,0 \} =\{ \Theta^\downarrow(U^\perp,W),0, \ldots,0 \},\\
3) &  \{ \underbrace{\pi/2, \ldots,\pi/2}_{l}, \{ \Theta^\downarrow(U,W)\} =\{ \pi/2- \Theta^\uparrow(U,W^\perp),0, \ldots,0 \},
\end{array}
\]
where $l=\max{(\dim U - \dim W,0)}$ and extra 0s at the end may need to added on either side to match the sizes.
\end{prop}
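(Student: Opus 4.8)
The plan is to derive all three identities from the cosine--sine (CS) decomposition of the orthogonal matrix relating a frame adapted to the pair $(U,U^\perp)$ to one adapted to $(W,W^\perp)$. First I would pick an orthonormal basis $\{u_i\}_{i=1}^n$ of $V$ whose first $p$ vectors span $U$ and whose last $n-p$ span $U^\perp$, and an orthonormal basis $\{w_j\}_{j=1}^n$ whose first $q$ vectors span $W$ and whose last $n-q$ span $W^\perp$. The transition matrix $Q=(\langle u_i,w_j\rangle)$ is orthogonal; partitioned according to $(p,n-p)$ rows and $(q,n-q)$ columns into blocks $Q_{11},Q_{12},Q_{21},Q_{22}$, it encodes all four projectors. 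By Lemma \ref{principal angles are singular values of projector} (equivalently Afriat's Theorem \ref{Afriat Theorem}) the cosines of the principal angles of $(U,W)$ are the singular values of $Q_{11}$, and likewise those of $(U,W^\perp)$, $(U^\perp,W)$ and $(U^\perp,W^\perp)$ are the singular values of $Q_{12}$, $Q_{21}$ and $Q_{22}$ respectively.

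Next I would invoke the CS decomposition: there exist orthogonal $R_1\in O(p)$, $R_2\in O(n-p)$, $S_1\in O(q)$, $S_2\in O(n-q)$ for which $\mathrm{diag}(R_1,R_2)^t\,Q\,\mathrm{diag}(S_1,S_2)$ is in canonical cosine--sine form, so that $Q_{11}$ becomes diagonal, carrying an identity part (angle $0$, the contribution of $U\cap W$), a genuine part $\mathrm{diag}(\cos\theta_i)$ with $\theta_i\in(0,\pi/2)$, a zero part (angle $\pi/2$), together with the rectangular padding forced by $p\neq q$. The same rotations simultaneously diagonalize $Q_{12},Q_{21},Q_{22}$, so the sines $\sin\theta_i$ of these same angles appear, in matched positions, as the entries of $Q_{12}$ and $Q_{21}$, while $Q_{22}$ again exhibits the $\cos\theta_i$. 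The mechanism forcing this pairing is the orthogonality of $Q$, i.e.\ the block relations $Q_{11}Q_{11}^t+Q_{12}Q_{12}^t=I_p$ and $Q_{11}^tQ_{11}+Q_{21}^tQ_{21}=I_q$: a left (resp.\ right) singular vector of $Q_{11}$ with singular value $\cos\theta$ is automatically a singular vector of $Q_{12}$ (resp.\ $Q_{21}$) with singular value $\sin\theta=\cos(\pi/2-\theta)$.

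Reading off singular values from this canonical form yields the three claims simultaneously: the angles in $(0,\pi/2)$ common to $Q_{11}$ and $Q_{22}$ give 1); those common to $Q_{12}$ and $Q_{21}$ give 2); and the relation $\cos^2\theta_i+\sin^2\theta_i=1$ linking the $Q_{11}$- and $Q_{12}$-entries gives the complementarity $\theta\leftrightarrow\pi/2-\theta$ of 3). The one genuinely delicate point, which I expect to be the main obstacle, is the bookkeeping of the extreme angles $0$ and $\pi/2$ and of the zero-padding. One must account separately for $\dim(U\cap W)$, $\dim(U\cap W^\perp)$, $\dim(U^\perp\cap W)$, $\dim(U^\perp\cap W^\perp)$ and, crucially, for the $l=\max(p-q,0)=p-q$ directions of $U$ automatically orthogonal to $W$ (the $p-q$ extra left singular vectors of the $p\times q$ block $Q_{11}$ lying in its kernel): each such direction supplies a singular value $1$ of $Q_{12}$, i.e.\ an angle $0$ of $(U,W^\perp)$, which on the right-hand side of 3) becomes $\pi/2-0=\pi/2$ and matches exactly the $l$ copies of $\pi/2$ prepended on the left. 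Checking that after this padding the two vectors of angles have equal length and coincide entrywise is the whole content of the argument; the CS decomposition itself is classical.
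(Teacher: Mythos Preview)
The paper does not supply its own proof of this proposition: it is quoted verbatim from \cite{ZK} (Zhu--Knyazev), so there is no in-paper argument to compare against. Your approach via the CS decomposition of the orthogonal transition matrix between $(U,U^\perp)$- and $(W,W^\perp)$-adapted frames is the standard one, and is in fact the argument used in the cited reference; the block relations $Q_{11}Q_{11}^t+Q_{12}Q_{12}^t=I_p$ etc.\ together with the bookkeeping of the rectangular padding (the $l=p-q$ extra directions) are exactly what is needed, and your outline handles them correctly.
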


In particular if we consider  a 4 dimensional subspace $U$,
from the Proposition (\ref{propertiers of CS decomposition}), it follows that, given a pair of orthogonal decompositions into 2-planes  of  $U$ i.e. $U=U_1  \stackrel{\perp}  \oplus U_2=W_1  \stackrel{\perp}  \oplus W_2$, there exists orthonormal bases $(X_1,X_2)$ of $U_1$, $(X_3,X_4)$ of  $U_2$, $(Y_1,Y_2)$ of $W_1$ and $(Y_3,Y_4)$ of  $W_2$     w.r.t. which   the orthogonal  Gram matrix $G=(<X_i,Y_j>)$  assumes the CS (Cosine-Sine) form:
\begin{equation} \label{CS decomposition}
G=\left(\begin{array} {cccc}
\cos \alpha_1 & 0 & \sin \alpha_1 & 0\\
0 & \cos \alpha_2 & 0 & \sin \alpha_2\\
-\sin\alpha_1 & 0 & \cos \alpha_1 & 0 \\
0 & -\sin \alpha_2 & 0 & \cos \alpha_2
\end{array}
\right)
\end{equation}
with non-negative entries in the upper triangular part.

Such bases are the bases of principal vectors of the pair $(U_1,W_1)$ and $(U_2,W_2)$.
If $\alpha_1 \neq \alpha_2$, the principal vectors are  defined up to sign. The related pairs are then defined up to contemporary change of sign.
Observe that the pairs $(X_1,Y_3)$ and $(X_2,Y_4)$ are pairs of related principal vectors of the subspaces  $(U_1,W_2)$
whereas   $(-X_3,Y_1)$  and $(-X_4,Y_2)$  are pairs of related principal vectors of  $(U_2,W_1)$. In case $\alpha_1 = \alpha_2$, the bases of related principal vectors are  defined up to a common orthogonal transformation in $U_1,U_2,W_1,W_2$.

In the following we determine the \textit{canonical bases} of any $U \in \mathcal{IC}^4$ and   associated \textit{canonical matrices} $C_{IJ}$ and $C_{IK}$  according to the definition given in \cite{Vacpreprint}. We recall that  the  canonical bases are  triples $(\{X_i\},\{Y_i\},\{Z_i\})$ of $\omega^I,\omega^J,\omega^K$-standard bases respectively built up using some determined  procedure. In general  they are  not unique (each triple  may depend for instance on  an arbitrary choice of some  vector to start the procedure); nevertheless they are canonical in the sense that the relative Gram matrices $C_{IJ}=(<X_i,Y_j>),  \;C_{IK}=(<X_i,Z_k>), C_{JK}=(<Y_j,Z_k>)$ are independent from the chosen triple.
According to  the Theorem (\ref{main_theorem 1_ existance of canonical bases with same mutual position}) the knowledge  of $C_{IJ}$ and $C_{IK}$ is necessary to determine the $Sp(n)$-orbit of $U$. In this degenerate case however we cannot apply tout court the procedure described in \cite{Vacpreprint}.
In fact such iterative procedure needs at each step the uniqueness (up to sign) of the first (smallest) principal angle of a pair of standard subspaces of $\omega^I$ and $\omega^J$ and of $\omega^I$ and $\omega^K$. In our case  we have only one standard subspace for any $A \in S(\mathcal{Q})$ which is $U$ itself and, because of the isoclinicity, the  choice of the first pair of  related principal vectors necessary to start the procedure   is not unique.  Here we will show that to overtake this problem  we need  to add one extra condition to such  procedure which will allow us to  determine the canonical bases and associated  pair of canonical matrices $C_{IJ},C_{IK}$.

In the following if $U$ is orthogonal sum of a pair of 2-planes both isoclinic with their $I,J,K$-images with angles $\theta^I,\theta^I,\theta^K$ respectively  we say that $U$ is a \textit{2-planes decomposable} subspace. In a 2-plane decomposable  subspace  the values of $\xi,\chi,\eta$ are all clearly equal to $\pm 1$.

 We consider first the case that none among $\xi,\chi,\eta$ equals $\pm 1$.
    Let $X_1 \in U$ unitary  and   $(X_2,Y_2,Z_2)$ as in  (\ref{$X_2,Y_2,Z_2$}).
The subspaces $L(X_1,X_2),L(X_1,Y_2),L(X_1,Z_2)$ are respectively standard subspaces of $\omega^I, \omega^J,\omega^K$ restricted to $U$.
Consider  $L(X_2,Y_2)$ and the vectors $X_4,Y_4$ of such 2-plane such that $(X_2,X_4$) and $(Y_2,Y_4)$ are   a pair of orthonormal basis consistently oriented   with $(X_2,Y_2)$ (then  $<X_2,Y_4> <0$).

Let then $X_3=-\frac{I^{-1} Pr^{IU} X_4}{\cos \theta^I}$ be the  unique vector such that $<X_3,IX_4>= \cos \theta^I$ i.e. $L(X_3,X_4)$ is an $\omega^I$-standard 2-plane and analogously  $Y_3=-\frac{J^{-1}Pr^{JU}Y_4}{\cos \theta^J}$   the unique vector such that $<Y_3,JY_4>= \cos \theta^J$. Clearly the vectors $X_3$ and $Y_3$ belong to $U$.

Analogously  we consider   $L(X_2,Z_2)$ and the vectors $\tilde X_4,Z_4$ of such 2-plane such that $(X_2,\tilde X_4$) and $(Z_2,Z_4)$ are   a pair of orthonormal basis consistently oriented  with the pair $(X_2,Z_2)$ (then  $<X_2,Z_4> <0$). Again, let $\tilde X_3=-\frac{I^{-1} Pr^{IU} \tilde X_4}{\cos \theta^I}$ be the unique vector such that $<\tilde X_3,I\tilde X_4>= \cos \theta^I$ i.e. $L(\tilde X_3,\tilde X_4)$ is an  $\omega^I$ standard 2-plane and $Z_3=-\frac{K^{-1}Pr^{KU}Z_4}{\cos \theta^K}$  the unique vector such that $<Z_3,K Z_4>= \cos \theta^K$ i.e.  $L (Z_3,Z_4)$ is an  $\omega^K$ standard 2-plane. The vectors $\tilde X_3$ and $Z_3$ belong to $U$. Proceeding in the same way considering the oriented 2-plane $L(Y_2,Z_2)$ we determine the pair $(\tilde Y_4, \tilde Z_4)$ and consequently $(\tilde Y_3,\tilde Z_3)$.
Namely one has
 \[
\begin{array}{llll} \label{oriented bases}
 \text{the pair $(X_4,Y_4) \in L(X_2,Y_2)$ where} \qquad X_4= \frac{Y_2- \xi X_2}{\sqrt{1-\xi^2}}, &  Y_4 =  \frac{-X_2+ \xi Y_2}{\sqrt{1-\xi^2}};\\
\text{the pair $(\tilde X_4,Z_4) \in L(X_2,Z_2)$ where} \qquad \tilde X_4=  \frac{Z_2- \chi X_2}{\sqrt{1-\chi^2}},&  Z_4  =    \frac{-X_2+ \chi Z_2}{\sqrt{1-\chi^2}};\\
\text{the pair $(\tilde Y_4,\tilde Z_4) \in L(Y_2,Z_2)$ where} \qquad  \tilde Y_4=  \frac{Z_2- \mu Y_2}{\sqrt{1-\mu^2}}, & \tilde Z_4  =   \frac{-Y_2+ \mu Z_2}{\sqrt{1-\mu^2}}.
\end{array}
\]

\begin{prop} \label{equality of thirs element $X_3=Y_3$ of the chains} $ $\\
If both $\cos \theta^I \neq 0$ and $\cos \theta^J \neq 0$ one has $X_3= -\frac{I^{-1} Pr^{IU} X_4}{\cos \theta^I}= -\frac{J^{-1}Pr^{JU}Y_4}{\cos \theta^J}=Y_3$,\\
if both $\cos \theta^I \neq 0$ and $\cos \theta^K \neq 0$ one has $\tilde X_3= -\frac{I^{-1} Pr^{IU} \tilde X_4}{\cos \theta^I}= -\frac{K^{-1}Pr^{KU}Z_4}{\cos \theta^K}=Z_3$,\\
if both $\cos \theta^J \neq 0$ and $\cos \theta^K \neq 0$ one has $\tilde Y_3= -\frac{J^{-1} Pr^{IU} \tilde Y_4}{\cos \theta^J}= -\frac{K^{-1}Pr^{KU}\tilde Z_4}{\cos \theta^K}=\tilde Z_3$.
\end{prop}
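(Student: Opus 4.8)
The plan is to reformulate the whole construction in terms of complex structures induced on $U$ itself, so that the third vectors $X_3,Y_3,Z_3$ become images of $X_4,Y_4,Z_4$ under explicit operators, and then to reduce each claimed equality to a single Clifford-type anticommutation relation among those operators. I will treat the first assertion $X_3=Y_3$ in detail; the other two follow verbatim after permuting $(I,J,\xi)$.

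First I would introduce, for each $A\in\{I,J,K\}$ with $\cos\theta^A\neq 0$, the endomorphism $\phi_A=\pi_U\circ A|_U$ of $U$, where $\pi_U$ is orthogonal projection onto $U$. Writing the projectors in an orthonormal basis $\{X_i\}$ of $U$ and using $I^t=-I$, a direct computation gives $I^{-1}Pr^{IU}X=-\phi_I X$ for every $X\in U$ (and likewise for $J,K$). Hence the defining formulas (\ref{$X_2,Y_2,Z_2$}) become $X_2=-\phi_I X_1/\cos\theta^I$, while the definition of $X_3$ (with its extra minus sign) becomes $X_3=\phi_I X_4/\cos\theta^I$, and similarly for the $J$- and $K$-chains. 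The isoclinicity identity $\pi_U\circ Pr^{IU}|_U=\cos^2\theta^I\,Id_U$ translates, via $Pr^{IU}X=-I\phi_I X$, into $\phi_I^2=-\cos^2\theta^I\,Id_U$; therefore $J_A:=\phi_A/\cos\theta^A$ is an orthogonal complex structure on $U$ whenever $\cos\theta^A\neq 0$. In this language $X_2=-J_I X_1$, $X_3=J_I X_4$, $Y_2=-J_J X_1$, $Y_3=J_J Y_4$.

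The key step is to prove the operator identity $J_I J_J+J_J J_I=-2\xi\,Id_U$ (with $\chi,\eta$ in the analogous cases). By Proposition (\ref{invariance of <X_2,Y_2>}) the cosine $\xi=<X_2,Y_2>=<J_I X_1,J_J X_1>$ does not depend on the chosen unit vector $X_1$; by homogeneity this yields $<J_I X,J_J X>=\xi\,|X|^2$ for all $X\in U$. Since $J_I,J_J$ are skew-symmetric, the left-hand side is the quadratic form of $-J_I J_J$, whose symmetric part is $-\tfrac12(J_I J_J+J_J J_I)$. A quadratic form identically equal to $\xi|X|^2$ forces that symmetric part to be $\xi\,Id_U$, which is exactly the desired anticommutation relation.

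Finally I would substitute the explicit oriented-basis expressions $X_4=(Y_2-\xi X_2)/\sqrt{1-\xi^2}$ and $Y_4=(-X_2+\xi Y_2)/\sqrt{1-\xi^2}$ and apply $J_I$, $J_J$. Using $J_I X_2=-J_I^2 X_1=X_1$, $J_I Y_2=-J_I J_J X_1$, and symmetrically $J_J Y_2=X_1$, $J_J X_2=-J_J J_I X_1$, one obtains
\[
X_3-Y_3=\frac{-(J_I J_J+J_J J_I)X_1-2\xi X_1}{\sqrt{1-\xi^2}},
\]
which vanishes by the anticommutation identity of the previous step; hence $X_3=Y_3$. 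Replacing $(I,J,\xi)$ by $(I,K,\chi)$ and by $(J,K,\eta)$ proves $\tilde X_3=Z_3$ and $\tilde Y_3=\tilde Z_3$. The main obstacle is the operator identity of the third paragraph: everything hinges on upgrading the single scalar $\xi$ to a quadratic-form identity valid for every $X\in U$, and this upgrade is precisely what the invariance of $\xi$ (its independence from the leading vector $X_1$) supplies. Recognizing the complex-structure reformulation that makes this upgrade possible is the principal conceptual point; once it is in place the computation is routine.
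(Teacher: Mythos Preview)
Your argument is correct. Both proofs rest on the same essential input---the invariance of $\xi$ from Proposition~\ref{invariance of <X_2,Y_2>}---but you deploy it differently. The paper applies the invariance at the single vector $X_4$: defining $\tilde Y_3=-\frac{J^{-1}Pr^{JU}X_4}{\cos\theta^J}$, one gets $\langle X_3,\tilde Y_3\rangle=\xi$, and after expanding $\tilde Y_3$ in the $\omega^J$-standard basis and using that $(X_2,X_4)$ and $(Y_2,Y_4)$ are consistently oriented (so $\langle X_4,Y_4\rangle=\xi$), one reads off $\langle X_3,Y_3\rangle=1$. Your route instead upgrades the scalar invariance to the global operator identity $J_IJ_J+J_JJ_I=-2\xi\,Id_U$ via polarisation of the quadratic form, and then the equality $X_3=Y_3$ drops out of a short symbolic computation. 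Your approach is more structural: it makes explicit the Clifford-type relations among the induced complex structures $J_I,J_J,J_K$ on $U$, which is illuminating and would streamline several later arguments in the paper (for instance the form of $\omega^K|_{U^{IJ}}$ in Proposition~\ref{The pair $(U^{IJ}(X_1),KU^{IJ})$ is isoclinic.}). The paper's approach, by contrast, stays closer to the geometric language of principal vectors and avoids introducing any new operators, at the cost of a computation that is slightly more ad~hoc.
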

\begin{proof}
 We prove that $X_3=Y_3$ and  $(X_1,X_2,X_3,X_4)$ and $(X_1,Y_2,X_3,Y_4)$ are a pair of  standard bases of $\omega^I$ and $\omega^J$ restricted to $U=L(X_1,X_2,X_4,X_3)=L(X_1,Y_2,Y_4,Y_3)$ since the other proofs are similar.  From the  Proposition (\ref{invariance of <X_2,Y_2>}), we know that  fixed any  unitary vector $\bar X \in U$, the cosine of the angle between  the unitary vectors $\frac{I^{-1}Pr^{IU} \bar X}{\cos \theta^I}$ and $\frac{J^{-1}Pr^{JU} \bar X}{\cos \theta^J}$ is always the same and equals $\xi$. In particular considering the vector $X_4$, if $\tilde Y_3=-\frac{J^{-1}Pr^{JU}X_4}{\cos \theta^J}$  and  $X_3= -\frac{I^{-1} Pr^{IU} X_4}{\cos \theta^I}$ one has
 $<X_3,\tilde Y_3>= \xi$. Denoting by
 \[
 X_3= -\frac{I^{-1} Pr^{IU} X_4}{\cos \theta^I}, \qquad  \tilde Y_3=-\frac{J^{-1}Pr^{JU}X_4}{\cos \theta^J},\qquad Y_3= -\frac{J^{-1}Pr^{JU}Y_4}{\cos \theta^J}
 \]
 one has
\[
\begin{array} {lll}
\tilde Y_3 & =&- \frac{1}{\cos \theta^J} J^{-1} Pr^{JU}(<X_4,Y_2>Y_2 + <X_4,Y_4>Y_4)\\
&=&-\frac{1}{\cos \theta^J}( <X_4,Y_2> (-\cos \theta^J  X_1) + <X_4,Y_4> (-\cos \theta^J Y_3))=\\
&=& <X_4,Y_2>  X_1 + <X_4,Y_4>  Y_3.
\end{array}
\]
Then
\[
\xi= <X_3,\tilde Y_3>= <X_4,Y_4> <X_3,Y_3>
\]

Since the orthonormal bases $(X_2,X_4)$ and $(Y_2,Y_4)$ of $L(X_2,Y_2)$ are consistently oriented by hypothesis one has $<X_4,Y_4>=<X_2,Y_2>= \xi$
which implies that $<X_3,Y_3>=1$ i.e.
  \[X_3=Y_3.\]
The other equalities of the Proposition are proved in the same way.
\end{proof}


We define  the following  standard bases of $\omega^I|_U$ and $\omega^J|_U$ respectively
\begin{defi} \label{associated chains} Let $U \in \mathcal{IC}^4$, $(I,J,K)$  be an admissible basis and $\theta^I, \theta^J, \theta^K$ the respective angles of isoclinicity. In case none among $\xi,\chi,\eta$ is equal to $\pm 1$  (in particular if $U$ is  neither orthogonal nor 2-planes decomposable), for any unitary $X_1 \in U$, that we call leading vector, we define  the following  standard bases of $\omega^I|_U$ and $\omega^J|_U$ respectively
\[
\begin{array}{l}
\{X_i\}=\{X_1, X_2= \frac{I^{-1} Pr^{IU}X_1}{\cos \theta^I}, X_3=-\frac{I^{-1} Pr^{IU}X_4}{\cos \theta^I}, X_4=  \frac{Y_2- \xi X_2}{\sqrt{1-\xi^2}}\},\\
\{Y_i\}=\{X_1, Y_2= \frac{(J^{-1} Pr^{JU}X_1)}{\cos \theta^J}, Y_3=X_3=-\frac{I^{-1} Pr^{JU}Y_4}{\cos \theta^J}, Y_4  =  \frac{-X_2+ \xi Y_2}{\sqrt{1-\xi^2}} \}
\end{array}
\]
 the  $\omega^I$ and $\omega^J$-\textbf{chains} of $U$  centered on $X_1$,  and the following  standard bases of $\omega^I|_U$ and $\omega^K|_U$ respectively
\[
\begin{array}{l}
\{\tilde X_i\}=\{X_1, X_2=   \frac{I^{-1} Pr^{IU}X_1}{\cos \theta^I}, \tilde X_3=-\frac{I^{-1} Pr^{IU} \tilde X_4}{\cos \theta^I}, \tilde X_4=  \frac{Z_2- \chi X_2}{\sqrt{1-\chi^2}}\},\\
 \{Z_i\}=\{X_1, Z_2=   \frac{K{-1} Pr^{KU}X_1}{\cos \theta^K}, Z_3=\tilde X_3=-\frac{K^{-1} Pr^{KU}  Z_4}{\cos \theta^K}, Z_4  =   \frac{-X_2+ \chi Z_2}{\sqrt{1-\chi^2}}\}
\end{array}
\]
the  $\omega^I$ and  $\omega^K$-\textbf{chains} of $U$ centered on  $X_1$ and the following  standard bases of $\omega^J|_U$ and $\omega^K|_U$ respectively.
\[
\begin{array}{l}
\{\tilde Y_i\}=\{X_1, Y_2=   \frac{J^{-1} Pr^{JU}X_1}{\cos \theta^J}, \tilde Y_3=-\frac{J^{-1} Pr^{JU} \tilde Y_4}{\cos \theta^J}, \tilde Y_4=\frac{Z_2- \eta Y_2}{\sqrt{1-\eta^2}}\},\\
 \{\tilde Z_i\}=\{X_1, Z_2=   \frac{K^{-1} Pr^{KU}X_1}{\cos \theta^K}, \tilde Z_3=\tilde Y_3=-\frac{K^{-1} Pr^{KU} \tilde  Z_4}{\cos \theta^K}, \tilde Z_4  =   \frac{-Y_2+ \eta Z_2}{\sqrt{1-\eta^2}}\}
\end{array}
\]
the  $\omega^J$ and  $\omega^K$-\textbf{chains} of $U$ centered on  $X_1$.
We denote by  $\Sigma (X_1)$  the set of the six chains with leading vector $X_1$.
\end{defi}

Clearly $\Sigma (X_1)$ is  uniquely determined by the leading vector $X_1$. 

 \vskip .5cm

Let consider now the case that at least one among $(\xi,\chi,\eta)$ equals $\pm 1$ (this is always the case if $U$ is orthogonal (claim (\ref{principal vectors asociated to $X_1$ in case some of the pairs (U,IU), (U,JU),(U,KU) are strictly orthogonal}))  or 2-plane-decomposable).
Let denote by $U_1^I=L(X_1,X_2)$, $U_1^J=L(X_1,Y_2)$,  $U_1^K=L(X_1,Z_2)$) the $\omega^I, \omega^J, \omega^K$-standard 2-plane respectively  determined by  $X_1$ and by $(U_1^I)^\perp, (U_1^J)^\perp,(U_1^K)^\perp$ their orthogonal complements in $U$.

Consider first the case that only one among $\xi,\chi, \eta$ equals $\pm 1$. We define $\Sigma(X_1)$ in case $\xi=\pm 1$; the cases $\chi= \pm 1$ and $\eta=\pm 1$ can be treated similarly.
 If $\xi= \pm 1$ one has $U_1^I=L(X_1,X_2)=L(X_1,Y_2)=U_1^J$ and clearly $(U_1^I)^\perp=(U_1^J)^\perp$. Being $X_3 (=Y_3)  \in (U_1^I)^\perp \cap  (U_1^I)^\perp$, in this case any vector $X_3 \in (U_1^I)^\perp$ is a principal vector of the pair $((U_1^I)^\perp, (U_1^J)^\perp)$.
 We assume   $\tilde X_3 (=Z_3) = \tilde Y_3$.  Such unitary vector is determined up to sign.
 Moreover we assume $X_3=\tilde X_3$. It follows that  $Z_4= \tilde Z_4$.   To solve the ambiguity of sign  we  choose $X_4$ in order that the pair $(X_2,Z_2)$) and $(X_2,X_4)$ are consistently oriented.

 Mutatis mutandis, in case $\chi= \pm 1$ or $\eta= \pm 1$   we will have always $X_3(=Y_3)=\tilde X_3 (=Z_3) = \tilde Y_3 (=\tilde Z_3)$ defined up to sign, then  we can  give the following
 \begin{defi} \label{The chains if at least one among the 3 invariants is one or minus one}
 Let $\{X_i \}=(X_1,X_2,X_3,X_4)$. If one among $(\xi,\chi,\eta)$ equals $\pm 1$, in particular if  $U$ has a single orthogonality, we define the following chains:
 \[\xi= \pm 1:\quad \{X_i \}= \{\tilde X_i \}, \quad \{Y_i \}=(X_1,\xi X_2,X_3,\xi X_4)= \{\tilde Y_i \}, \quad \{Z_i \}=(X_1,Z_2,X_3,Z_4)= \{\tilde Z_i \};\]
 \[\chi= \pm 1:\quad \{X_i \}=\{\tilde X_i \}, \quad \{Y_i \}=(X_1,Y_2,X_3,Y_4)= \{\tilde Y_i \},\quad \{Z_i \}=(X_1,\chi X_2,X_3,\chi X_4)= \{\tilde Z_i \};\]
 \[\eta= \pm 1:\quad \{X_i \}= \{\tilde X_i \},\quad \{Y_i \}=(X_1,Y_2,X_3,Y_4)= \{\tilde Y_i \},\quad \{Z_i \}=(X_1,\eta Y_2,X_3,\eta Y_4)= \{\tilde Z_i \}.\]
\end{defi}
Similarly to the choice made when $\xi= \pm 1$, in case $\chi=\pm 1$  or $\eta=\pm 1$,  to solve the ambiguity of sign and have a unique $\Sigma(X_1)$,   we  choose $X_4$ in order that the pair $(X_2,Y_2)$  and $(X_2,X_4)$ are consistently oriented.

Finally,  since  $\eta=\xi  \cdot \chi$, in case a  pair among $(\xi,\chi,\eta)$ are equal to  $\pm 1$  then
all of them are equal to $\pm 1$ (namely either $\eta=\xi= \chi=1 $ or  two of them are equal to  -1 and the other  to 1),  we have the following
 \begin{defi} \label{The chains if all the 3 invariants is one or minus one}
In case  $U$ is a 2-planes decomposable subspace i.e. $\xi,\chi,\eta$ are all equal to  $\pm 1$ we define the following chains:
   \[\{X_i \}= \{\tilde X_i \}, \; \{Y_i \}=(X_1, \xi X_2,X_3,\xi X_4)= \{\tilde Y_i \}, \; \{Z_i \}=(X_1, \chi X_2,X_3,\chi X_4)=(X_1,\eta Y_2,X_3,\eta Y_4)= \{\tilde Z_i \}.\]
   In particular  if  $U$ has a double or triple  orthogonality,  one has   $\{X_i \}=\{\tilde X_i \}=\{Y_i \}= \{\tilde Y_i \}=\{Z_i \}= \{\tilde Z_i \}$.
\end{defi}

\begin{prop} \label{Signa is defined only for non decompasable 2planes}
The function $\Sigma: X_1 \mapsto \Sigma(X_1)$ is defined only if $U$ is not a 2-planes decomposable subspaces.
\end{prop}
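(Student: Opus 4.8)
The plan is to read the phrase ``$\Sigma$ is defined'' in the sense that is emphasized right after Definition~\ref{associated chains} (``$\Sigma(X_1)$ is uniquely determined by the leading vector $X_1$'') and in the orientation conventions following Definition~\ref{The chains if at least one among the 3 invariants is one or minus one} (chosen precisely ``to have a unique $\Sigma(X_1)$''): namely that $X_1 \mapsto \Sigma(X_1)$ is single-valued, the six chains being fixed by $X_1$ alone. I would then prove the equivalent assertion that this assignment is single-valued exactly when $U$ is not $2$-planes decomposable. The first step is to record the trichotomy already used above: as soon as one of $\xi,\chi,\eta$ lies in $\{\pm 1\}$ one has $\eta=\xi\chi$, so the number of these invariants equal to $\pm 1$ is $0$, $1$ or $3$, the value $3$ being exactly the $2$-planes decomposable case.

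In the cases $0$ and $1$, which are precisely the non-decomposable ones, I would check that $\Sigma(X_1)$ is genuinely determined by $X_1$. When none of $\xi,\chi,\eta$ equals $\pm 1$, every chain vector of Definition~\ref{associated chains} is an explicit expression in $X_1$; in particular $X_4=(Y_2-\xi X_2)/\sqrt{1-\xi^2}$ and its analogues are well defined because $\sqrt{1-\xi^2}$, $\sqrt{1-\chi^2}$, $\sqrt{1-\eta^2}$ are all nonzero. When exactly one of them, say $\xi$, equals $\pm 1$, then $\chi,\eta\notin\{\pm 1\}$, so $L(X_2,Z_2)$ is still two-dimensional and supplies $\tilde X_4,Z_4$ and the common third vector $X_3$ of Definition~\ref{The chains if at least one among the 3 invariants is one or minus one} up to a sign that the stated orientation convention removes. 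In both cases $\Sigma(X_1)$ is unique, so $\Sigma$ is defined.

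The heart of the argument is the decomposable case $\xi,\chi,\eta\in\{\pm 1\}$. Here $Y_2=\xi X_2$ and $Z_2=\chi X_2$, so the three standard $2$-planes collapse onto a single plane $U_1=L(X_1,X_2)$, and $X_1$ canonically determines only $U_1$ together with $U_2:=U\cap U_1^{\perp}$. The degeneracy is already visible in the formulas: the planes $L(X_2,Y_2)$, $L(X_2,Z_2)$, $L(Y_2,Z_2)$ shrink to lines, so the denominators $\sqrt{1-\xi^2}$, $\sqrt{1-\chi^2}$, $\sqrt{1-\eta^2}$ all vanish and the expressions for $X_4,\tilde X_4,Z_4,\ldots$ are meaningless; nothing beyond $X_1,X_2$ is produced.

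Finally I would prove the essential non-uniqueness. Since $U\in\mathcal{IC}^4$ and $U_2=U\cap U_1^{\perp}$, the $2$-plane $U_2$ is itself isoclinic with all its images, so for every $A\in S(\mathcal{Q})$ the single principal angle of $(U_2,AU_2)$ equals $\theta^A$; and on a $2$-plane the quantity $\omega^A(e_1,e_2)$ attached to an oriented orthonormal basis is invariant under the $SO(2)$ of rotations of that basis. Hence every positively oriented orthonormal basis $(X_3,X_4)$ of $U_2$ makes $L(X_3,X_4)$ simultaneously an $\omega^I$-, $\omega^J$- and $\omega^K$-standard $2$-plane, and so completes $X_1$ to a legitimate $\Sigma(X_1)$ in the sense of Definition~\ref{The chains if all the 3 invariants is one or minus one}; these bases range over a circle of genuinely distinct choices, each producing a different set of chains. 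The main obstacle is exactly this step: one must argue that the isoclinicity of $U_2$ leaves no distinguished direction with which to pin down $X_3$, which is precisely the role played in the non-degenerate cases by the honestly two-dimensional plane $L(X_2,Y_2)$ (or $L(X_2,Z_2)$, $L(Y_2,Z_2)$) that there furnishes the missing vector. Combining the two regimes gives the ``only if'' of the statement.
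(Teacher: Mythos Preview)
Your argument is correct and follows essentially the same route as the paper's proof: in the non-decomposable cases the third vector $X_3$ (equal to $Y_3=\tilde X_3=Z_3=\tilde Y_3=\tilde Z_3$) is pinned down by the construction and the orientation convention, whereas in the $2$-planes decomposable case any unit vector of $(U_1^I)^\perp$ serves as $X_3$, so that the chains are only determined up to an $SO(2)$-rotation in that complement. Your write-up is more detailed than the paper's three-line proof (explicit trichotomy, explicit degeneration of the denominators, explicit $SO(2)$-freedom), with one harmless slip: the identity $\eta=\xi\chi$ need not hold when only $\eta=\pm 1$ (there one gets $\chi=\eta\xi$ instead), but the trichotomy $0/1/3$ you draw from it is nonetheless correct.
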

\begin{proof}
In case $U$ is not a 2-planes decomposable subspace the vector $X_3(=Y_3)=\tilde X_3 (=Z_3) = \tilde Y_3 (=\tilde Z_3)$ is uniquely defined (after solving the ambiguity in sign as stated beforehand). In case $U$ is a 2-plane decomposable subspace any unitary vector  in $(U_1^I)^\perp$ can be chosen as $X_3$ leading to a different  set of  chains. In fact, for any $X_1 \in U$, although the decomposition of $U$ into standard 2-planes  is independent from the chosen $X_3$,  the  $\omega^I,\omega^J,\omega^K$-standard basis in  $(U_1^I)^\perp= (U_1^J)^\perp=(U_1^K)^\perp$ undergo a rotation.
\end{proof}

In all orthogonal  cases  at least one among  $\xi,\chi,\eta$ equals 1.
 Observe that in case   of double orthogonality  we consider any standard basis  centered on $X_1$   relative to the only non null cosine,
whereas, in case of triple orthogonality of $U$,    $\{X_i\}=\{Y_i\}=\{Z_i\}=\{\tilde X_i\}=\{\tilde Y_i\}=\{\tilde Z_i\}$  is any orthonormal basis of the r.h.p. 2-planes decomposable subspace  $U$ centered on $X_1$.

\begin{prop} \label{orthogonal maps between chains belong to Sp(n)}
Let $U \in \mathcal{IC}^4$   be not   a 2-planes decomposable subspace. The application  mapping  $\Sigma(X) \mapsto \Sigma(Y)$, where $X,Y \in U$ are a pair of leading vectors,  belongs to the real representation of  $Sp(n)$. Moreover $Sp(n)$ is transitive on $\Sigma$.
\end{prop}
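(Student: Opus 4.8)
The plan is to exhibit a single $g\in Sp(n)$ whose restriction to $U$ realizes the correspondence $\Sigma(X)\mapsto\Sigma(Y)$, and to read off transitivity as an immediate consequence. Denote by $\{X_i\},\{Y_i\},\{Z_i\}$ the three standard bases of $\omega^I|_U,\omega^J|_U,\omega^K|_U$ contained in $\Sigma(X)$ (leading vector $X_1=X$) and by $\{X_i'\},\{Y_i'\},\{Z_i'\}$ those contained in $\Sigma(Y)$ (leading vector $X_1'=Y$). The map in question is the linear isometry $g_0:U\to U$ determined by $g_0(X_i)=X_i'$; the first point to settle is that $g_0$ is simultaneously compatible with the other two chains, i.e. that it also sends $Y_j\mapsto Y_j'$ and $Z_j\mapsto Z_j'$.

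First I would show that the Gram matrices $C_{IJ}=(<X_i,Y_j>)$ and $C_{IK}=(<X_i,Z_j>)$ of the chains of $\Sigma(X_1)$ are functions of $(\xi,\chi)$ alone, independent of the leading vector. Using the explicit formulas of Definition \ref{associated chains} together with $<X_2,Y_2>=\xi$ and $X_4=\frac{Y_2-\xi X_2}{\sqrt{1-\xi^2}}$, $Y_4=\frac{-X_2+\xi Y_2}{\sqrt{1-\xi^2}}$, a direct computation gives $<X_4,Y_2>=-<X_2,Y_4>=\sqrt{1-\xi^2}$ and $<X_4,Y_4>=\xi$, so that $C_{IJ}$ is a fixed matrix depending only on $\xi$ (and likewise $C_{IK}$ only on $\chi$). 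Since $(\xi,\chi,\eta)$ are invariants of $U$ by Proposition \ref{invariance of <X_2,Y_2>}, the matrices built from $\Sigma(X)$ and from $\Sigma(Y)$ coincide, $C_{IJ}=C_{IJ}'$ and $C_{IK}=C_{IK}'$. Hence $g_0$, which carries the orthonormal basis $\{X_i\}$ to $\{X_i'\}$, automatically sends $Y_j=\sum_i<X_i,Y_j>X_i$ to $\sum_i<X_i',Y_j'>X_i'=Y_j'$, and similarly $Z_j\mapsto Z_j'$; thus $g_0$ is a well-defined orthogonal map of $U$ taking $\Sigma(X)$ to $\Sigma(Y)$.

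It then remains to extend $g_0$ to an $\HH$-linear isometry of $V$. Because $\{X_i\}$ is $\omega^I$-standard, the real matrix $(<X_i,IX_j>)$ is fixed by $\theta^I$, while the cross-entries $(<X_i,JX_j>)$ and $(<X_i,KX_j>)$ are determined by $C_{IJ},C_{IK}$ and $\theta^J,\theta^K$; all of these agree for $\{X_i\}$ and $\{X_i'\}$, so the quaternionic Gram matrices coincide, $X_i\cdot X_j=X_i'\cdot X_j'$ for all $i,j$. As the $\HH$-Hermitian product is positive definite, two families with the same quaternionic Gram matrix are related by an $\HH$-linear isometry of $V$, and every such isometry belongs to $Sp(n)$; this produces $g\in Sp(n)$ with $g(X_i)=X_i'$, consistent with the constructive direction of Theorem \ref{main_theorem 1_ existance of canonical bases with same mutual position} applied to the two triples of standard bases of the single subspace $U$. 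Being $\HH$-linear, isometric and satisfying $g(U)=U$, $g$ commutes with $I,J,K$ and with the projectors $Pr^{IU},Pr^{JU},Pr^{KU}$, so it transports each defining identity of Definition \ref{associated chains} for $\Sigma(X)$ into the corresponding one for $\Sigma(Y)$; therefore $g|_U=g_0$ and $g(\Sigma(X))=\Sigma(Y)$.

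Transitivity is then immediate: for arbitrary leading vectors $X,Y$ the element $g$ just constructed lies in $Sp(n)$, fixes $U$ setwise, and sends $X\mapsto Y$ and $\Sigma(X)\mapsto\Sigma(Y)$, so $Sp(n)$ acts transitively on $\Sigma$. The main obstacle I expect is the first step, namely verifying that $C_{IJ},C_{IK}$, and hence the full quaternionic Gram matrix, are genuinely independent of the leading vector; this rests on the invariance of $(\xi,\chi,\eta)$ from Proposition \ref{invariance of <X_2,Y_2>} and on the fact that the common third vector $X_3=Y_3=\tilde X_3$ is pinned down by the orientation conventions already fixed in the construction. Once leading-vector-independence is in hand, the passage to $Sp(n)$ is a formal consequence of the positive-definiteness of the $\HH$-Hermitian product.
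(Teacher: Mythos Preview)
Your approach contains a genuine gap in the generic case where none of $\xi,\chi,\eta$ equals $\pm1$. You assert that $C_{IK}=(<X_i,Z_j>)$ depends only on $\chi$, by analogy with $C_{IJ}$; but the $\omega^K$-chain has $Z_3=\tilde X_3$, and in this generic case $\tilde X_3\neq X_3$: the two third vectors sit in $L(X_3,X_4)$ at an angle governed by $(\Gamma,\Delta)$, and $C_{IK}$ is the matrix in~(\ref{canonical matrices $C_{IJ}$ $C_{IK}$ and of 4-dimensional isoclinic subspace w.r.t. the associated chains}), which involves $\Gamma$ and $\Delta$ explicitly. Your closing remark that ``$X_3=Y_3=\tilde X_3$ is pinned down by the orientation conventions'' is the source of the error: that triple equality holds only when at least one of $\xi,\chi,\eta$ equals $\pm1$ (Definition~\ref{The chains if at least one among the 3 invariants is one or minus one}), a subcase of the hypothesis but not the whole of it. Now $\Gamma$ is a function of the known invariants $(\xi,\chi,\eta)$ via~(\ref{cos phi}), but the invariance of $\Delta$ (equivalently of its sign, since $\Gamma^2+\Delta^2=1$ here) is not yet available: in the paper it is derived as a \emph{corollary} of this very proposition (Corollary~\ref{Gram matrices of pairs of chains ar invariants of $U$}, Proposition~\ref{Gamma is an invariant of an isoclinic 4 dimensional subspace 1}). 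As written, the argument is circular.

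The paper avoids this by reversing the logic. Rather than computing Gram matrices first and then manufacturing an element of $Sp(n)$, it observes that every step of the construction $X_1\mapsto\Sigma(X_1)$ uses only the metric, the structures $I,J,K$, the projectors $Pr^{IU},Pr^{JU},Pr^{KU}$, and the fixed orientation rules---all data that any orthogonal $C$ commuting with $I,J,K$ preserves or intertwines. One then checks $CX_2=\overline{X}_2$, $CY_2=\overline{Y}_2$, $CZ_2=\overline{Z}_2$ directly from the defining formulas, then $CX_4=\overline{X}_4$ etc.\ because $C$ preserves oriented angles, and so on through the chain. Transitivity follows from transitivity of $Sp(n)$ on unit vectors. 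The invariance of $(\Gamma,\Delta)$ and of $C_{IJ},C_{IK}$ is thus an \emph{output} of the argument, not an input. If you want to rescue your route, you would have to prove independently---for instance from one of the expressions in~(\ref{equivalent exprssions of Delta}) such as $\Delta=-\epsilon\,<Y_2,IZ_2>$---that $\Delta$ does not depend on the leading vector, without appealing to the proposition itself.
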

\begin{proof}
Let $U \in \mathcal{IC}(4)$   be not   a 2-planes decomposable subspace and consider $\Sigma(X_1)=\{ \{X_i\},\{Y_i\}, \{\tilde X_i\}, \{Z_i\},\{\tilde Y_i\}, \{\tilde Z_i\} \}$.
 Let $C$ be a linear  transformation on $U$. The images $\{CX_i\},\{CY_i\}, \{C \tilde X_i\}, \{CZ_i\}, \{C \tilde Y_i\}, \{C\tilde Z_i\} \}$  are standard bases iff $C$ is an orthogonal map and commutes with $I,J,K$ i.e. iff $C \in Sp(n)$. We prove that they are  chains of $U$ centered on $CX_1= \bar X_1$.
In fact let  $\Sigma(\bar X_1)=\{\{\bar X_i\},\{\bar Y_i\}, \{\tilde {\bar {X_i}}\}, \{\bar Z_i\},\{\tilde {\bar{Y_i}}\}, \{\tilde {\bar{Z_i}}\}    \}$.  The subspaces $L(CX_1,CX_2)$ (resp. $L(CX_1,CY_2)$, $L(CX_1,CZ_2)$) are standard 2-plane and therefore $CX_2=\bar X_2$, (resp. $CY_2=\bar Y_2$, $CZ_2=\bar Z_2$). As an orthogonal transformation, $C$ preserves angles and then $CX_4=\bar X_4$ (resp. $CY_4=\bar Y_4$, $C \tilde Y_4=\tilde {\bar{Y_4}}$, $C \tilde Z_4=\tilde {\bar{Z_4}}$). Consequently $CX_3=\bar X_3$ (resp. $CY_3=CX_3=\bar Y_3$, $CZ_3= \tilde X_3 =\bar Z_3)$ since $L(CX_3,CX_4)=L(\bar X_3, \bar X_4)$  (resp. $L(CY_3,CY_4)=L(\bar Y_3, \bar Y_4)$, $L(C \tilde X_3,C \tilde X_4)=L(\bar{\tilde{X_3}}, \bar{\tilde{X_4}})$, $L(C\tilde X_3,CZ_4)=L(\bar X_3, \bar Z_4)$ being a pair of standard 2-planes with non trivial intersection.
The fact that the action is transitive follows from the   transitivity of $Sp(n)$  on unitary vectors.
\end{proof}

\begin{coro}  \label{Gram matrices of pairs of chains ar invariants of $U$}
Let $X_1 \in U$. For any pair of chains belonging to $\Sigma(X_1)$ the relative Gram matrix is an invariant of $U$.
In particular the angles $\widehat{X_2,Y_2}, \widehat{X_2,Z_2},\widehat{Y_2,Z_2}$ are invariants.
\end{coro}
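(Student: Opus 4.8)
The plan is to deduce the statement directly from Proposition \ref{orthogonal maps between chains belong to Sp(n)}, by reading the Gram matrix of a pair of chains as a function of the leading vector and showing it is constant. First I would make the domain explicit: fixing an ordered pair of chains in $\Sigma(X_1)$ — say the $\omega^I$- and $\omega^J$-chains $\{X_i\}$ and $\{Y_i\}$ — their Gram matrix $(<X_i,Y_j>)=C_{IJ}$ is a priori a function of the unit leading vector $X_1\in U$, the sign ambiguities having already been removed by the consistency-of-orientation conventions fixed in Definitions \ref{associated chains}--\ref{The chains if all the 3 invariants is one or minus one} (recall that $\Sigma(X_1)$ is uniquely determined by $X_1$). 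To call $C_{IJ}$ an invariant of $U$ in the sense fixed in the Preliminaries, I must show this function is constant over all admissible $X_1$.

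Second, I would invoke transitivity. Given two unit leading vectors $X_1,\bar X_1\in U$, Proposition \ref{orthogonal maps between chains belong to Sp(n)} supplies $C\in Sp(n)$ with $C\,\Sigma(X_1)=\Sigma(\bar X_1)$, and its proof shows that $C$ acts chain by chain and vector by vector, so that $CX_i=\bar X_i$ and $CY_i=\bar Y_i$. Since $Sp(n)\subset SO(4n)$, the map $C$ is orthogonal and hence preserves every scalar product,
\[
<\bar X_i,\bar Y_j>=<CX_i,CY_j>=<X_i,Y_j>,
\]
so the Gram matrix built from $\Sigma(\bar X_1)$ coincides with the one built from $\Sigma(X_1)$. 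As this holds for every pair of leading vectors, the Gram matrix of a fixed pair of chains does not depend on $X_1$ and is therefore an invariant of $U$; the same argument applies verbatim to each of the six chains, hence to all of $C_{IJ},C_{IK},C_{JK}$. Specializing to the $(2,2)$ entries recovers $<X_2,Y_2>=\xi$, $<X_2,Z_2>=\chi$, $<Y_2,Z_2>=\eta$, so the angles $\widehat{X_2,Y_2},\widehat{X_2,Z_2},\widehat{Y_2,Z_2}$ are invariants, in agreement with Proposition \ref{invariance of <X_2,Y_2>}.

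The delicate point is not the orthogonality computation, which is immediate, but making sure the hypotheses of Proposition \ref{orthogonal maps between chains belong to Sp(n)} are genuinely available. That proposition, and hence the full Gram-matrix statement, is meaningful only for $U$ that is \emph{not} 2-planes decomposable, because by Proposition \ref{Signa is defined only for non decompasable 2planes} the function $\Sigma$ is well defined only there; in the decomposable case the third chain vector $X_3$ is not pinned down and $\Sigma(X_1)$ is not single-valued, so the transport argument above has no rigid object to carry. For that case I would fall back on the direct computation of Proposition \ref{invariance of <X_2,Y_2>}, which already yields the invariance of $\xi,\chi,\eta$ — and hence of the three angles — without reference to $\Sigma$. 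A secondary check is that the sign and orientation conventions fixing $\Sigma(X_1)$ uniquely are respected by $C$: since $C\in SO(4n)$ preserves both scalar products and orientation, it sends consistently oriented pairs to consistently oriented pairs, so $C$ really carries the unique $\Sigma(X_1)$ onto the unique $\Sigma(\bar X_1)$ rather than onto a differently-signed variant. Once this is verified the conclusion is purely formal.
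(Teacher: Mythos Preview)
Your argument is correct and is exactly the approach the paper intends: the corollary is stated immediately after Proposition~\ref{orthogonal maps between chains belong to Sp(n)} and is meant to follow from it by the orthogonality-preserves-scalar-products reasoning you spell out. You are more explicit than the paper about the domain of $\Sigma$ and the 2-planes decomposable case, which is a genuine improvement in clarity; the paper's only additional remark is that the last sentence (invariance of the three angles) follows from the invariance of $\langle X_4,Y_2\rangle,\ \langle\tilde X_4,Z_2\rangle,\ \langle\tilde Y_4,Z_2\rangle$, whereas you read the same conclusion off the $(2,2)$ entries $\xi,\chi,\eta$ --- both are equivalent.
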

The last statement follows from the invariance of $<X_4,Y_2>, <\tilde X_4,Z_2>, <\tilde Y_4,Z_2>$.

W.r.t. the chains  $\{X_i\},\{Y_i\}$ and $\{\tilde X_i\},\{Z_i\}$,    the Gram matrices $C_{IJ}=(<X_i,Y_j>)$  and $C'_{IK}=(<\tilde X_i,Z_j>)$ assume the form

\begin{equation} \label{canonical matrix $A^c$ of 4-dimensional isoclinic subspace w.r.t. a non adapted basis}
C_{IJ}=\left(
\begin{array} {cccc}
1 & 0 & 0 & 0\\
0 & \xi & 0 & -\sqrt{1-\xi^2}\\
0 & 0 & 1 & 0 \\
0 &  \sqrt{1-\xi^2} & 0 & \xi
\end{array}
\right); \qquad \qquad
C'_{IK}=\left(
\begin{array} {cccc}
1 & 0 & 0 & 0\\
0 & \chi & 0 & - \sqrt{1-\chi^2}\\
0 & 0 & 1 & 0 \\
0 &  \sqrt{1-\chi^2} & 0 & \chi
\end{array}
\right).
\end{equation}

Observe that such forms differs from the classical  CS decomposition given in (\ref{CS decomposition}). Even if the pairs $(X_2,Y_2)$ and $(X_2,Z_2)$ are related principal vectors (i.e. if $\xi$ and $\chi$ are non negative), the pair $(X_2,Y_4)$ (resp. $(X_2,Z_4)$) consists  of related principal vector only if $\xi=1$ (resp. $\chi=1$).

Clearly $L(X_3,X_4)=L(\tilde X_3,\tilde X_4)$. The bases $(X_3,X_4)$ and $(\tilde X_3,\tilde X_4)$, being $\omega^I$-standard bases, are consistently oriented. Let
 \[
 C: \left(   \begin{array}{cc}
 <X_3,\tilde X_3> & <X_3,\tilde X_4>\\
<X_4,\tilde X_3> & <X_4,\tilde X_4>
\end{array}      \right)
 = \left(   \begin{array}{cc}
 \Gamma & -\Delta\\
\Delta & \Gamma
\end{array}
\right)
\]
the orthogonal matrix of the change of basis. 
The orthogonal matrices $C_{IJ}=(<X_i,Y_j>)$ and  $C_{IK}=(<X_i,Z_j>)$ of the relative position of the basis $\{X_i\}=(X_1,X_2,X_3,X_4)$,  $\{Y_i\}=(X_1,Y_2,X_3,Y_4)$ and $\{Z_i\}=(X_1,Z_2,\tilde X_3,Z_4)$   are given by
 \begin{equation} \label{canonical matrices $C_{IJ}$ $C_{IK}$ and of 4-dimensional isoclinic subspace w.r.t. the associated chains}
C_{IJ}=\left(
\begin{array} {cccc}
1 & 0 & 0 & 0\\
0 & \xi & 0 & - \sqrt{1-\xi^2} \\
0 & 0 & 1 & 0 \\
0 &  \sqrt{1-\xi^2} & 0 & \xi
\end{array}
\right), \qquad
  C_{IK}=\left(
  \begin{array} {cccc}
1 & 0 & 0 & 0\\
0 & \chi & 0 & - \sqrt{1-\chi^2}\\
0 & -\Delta \sqrt{1-\chi^2} &\Gamma & -\chi \Delta \\
0 &  \Gamma  \sqrt{1-\chi^2} & \Delta & \chi \Gamma.
\end{array}
\right).
\end{equation}

To determine $\Gamma=<X_3,\tilde X_3>$,  being $<Y_2,Z_2>= <Y_2,X_2><Z_2,X_2>+  <Y_2,X_4><Z_2,X_4>$,
 we get $\eta= \xi \chi  + \sqrt{1-\xi^2}\sqrt{1-\chi^2} \; \Gamma$. From the above expression, in case neither $\xi$ nor $\chi$ equal 1, we get:
  \begin{equation} \label{cos phi}
      \Gamma= \frac{\eta-\xi \chi}{\sqrt{1-\xi^2} \; \sqrt{1-\chi^2} } \in [-1,1].
    \end{equation}

\begin{prop} \label{Gamma is an invariant of an isoclinic 4 dimensional subspace 1}
If none among $\xi,\chi,\eta$ is equal to $\pm 1$
 the value of   $\Gamma \in [-1,1]$ is  given in (\ref{cos phi}). 
 If instead at least one among $\xi,\chi,\eta$ is equal to $\pm 1$ then $\Gamma=1$.  In particular this happens if  $U$ is orthogonal or is a 2-planes decomposable subspace.
  In all cases,  the pair $(\Gamma,\Delta)$ is an invariant of $U$.   
\end{prop}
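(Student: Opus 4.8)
The plan is to extract $\Gamma$ from the way the invariant $\eta$ couples to the rotation carrying $(X_3,X_4)$ onto $(\tilde X_3,\tilde X_4)$ inside the common $2$-plane $L(X_1,X_2)^\perp\cap U$, and then to deduce the invariance of $(\Gamma,\Delta)$ from the facts already proved about the Gram matrices of chains. I would begin with the generic case where none of $\xi,\chi,\eta$ equals $\pm1$. Then $U$ is neither orthogonal nor $2$-planes decomposable, so by Claim (\ref{principal vectors asociated to $X_1$ in case some of the pairs (U,IU), (U,JU),(U,KU) are strictly orthogonal}) none of $\cos\theta^I,\cos\theta^J,\cos\theta^K$ vanishes and every vector of Definition (\ref{associated chains}) is defined. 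Inverting the formulas for $X_4$ and $\tilde X_4$ gives $Y_2=\xi X_2+\sqrt{1-\xi^2}\,X_4$ and $Z_2=\chi X_2+\sqrt{1-\chi^2}\,\tilde X_4$; since $X_2\perp X_4$ and $X_2\perp\tilde X_4$, taking the scalar product yields
\[
\eta=<Y_2,Z_2>=\xi\chi+\sqrt{1-\xi^2}\,\sqrt{1-\chi^2}\,<X_4,\tilde X_4>.
\]

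The crux is the identification $<X_4,\tilde X_4>=\Gamma$. Both $(X_3,X_4)$ and $(\tilde X_3,\tilde X_4)$ are orthonormal $\omega^I$-standard bases of the same $2$-plane $L(X_1,X_2)^\perp\cap U$, sharing the positive convention $<X_3,IX_4>=<\tilde X_3,I\tilde X_4>=\cos\theta^I>0$. Hence the orthogonal change of basis $C=(<X_i,\tilde X_j>)$ preserves this nonzero restricted form and its orientation, so $C\in SO(2)$ and has the form $\left(\begin{smallmatrix}\Gamma&-\Delta\\\Delta&\Gamma\end{smallmatrix}\right)$ with equal diagonal entries $<X_3,\tilde X_3>=<X_4,\tilde X_4>=\Gamma$ and $\Gamma^2+\Delta^2=1$. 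Substituting into the displayed relation and solving for $\Gamma$ produces formula (\ref{cos phi}); that $\Gamma\in[-1,1]$ is automatic, being the cosine $<X_3,\tilde X_3>$ of unit vectors. I expect this $SO(2)$ step to be the main obstacle: one must exclude that $C$ is a reflection, and this is exactly where the hypothesis that $U$ is not orthogonal (so $\omega^I$ does not vanish on $L(X_1,X_2)^\perp\cap U$) together with the common sign convention of Definition (\ref{associated chains}) enters, since a reflection would reverse the sign of $<X_3,IX_4>$ and destroy standardness.

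For the degenerate case, where at least one of $\xi,\chi,\eta$ equals $\pm1$, the chains are those of Definitions (\ref{The chains if at least one among the 3 invariants is one or minus one}) and (\ref{The chains if all the 3 invariants is one or minus one}), in which the third vectors are all identified, $X_3=\tilde X_3$, by construction; therefore $<X_3,\tilde X_3>=1$ and $(\Gamma,\Delta)=(1,0)$. By Claim (\ref{principal vectors asociated to $X_1$ in case some of the pairs (U,IU), (U,JU),(U,KU) are strictly orthogonal}) an orthogonal $U$ always has at least one of $\xi,\chi,\eta$ equal to $1$, and a $2$-planes decomposable $U$ has all three equal to $\pm1$; both therefore fall into this case and give $\Gamma=1$.

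It remains to prove invariance, i.e. independence of the leading vector $X_1$. When $U$ is not $2$-planes decomposable, $(\Gamma,\Delta)$ is a block of the Gram matrix between the chains $\{X_i\}$ and $\{\tilde X_i\}$ of $\Sigma(X_1)$, which by Corollary (\ref{Gram matrices of pairs of chains ar invariants of $U$}) --- itself a consequence of the $Sp(n)$-transitivity on $\Sigma$ established in Proposition (\ref{orthogonal maps between chains belong to Sp(n)}) --- does not depend on $X_1$; alternatively, in the generic subcase $\Gamma$ is the explicit function (\ref{cos phi}) of the invariants $\xi,\chi,\eta$ of Proposition (\ref{invariance of <X_2,Y_2>}), and $\Delta$ is then pinned down by $\Gamma^2+\Delta^2=1$ with sign fixed by the oriented construction. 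When $U$ is $2$-planes decomposable the value $(\Gamma,\Delta)=(1,0)$ is constant, hence trivially invariant. This completes the plan.
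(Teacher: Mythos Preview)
Your proposal is correct and follows essentially the same approach as the paper. The derivation of $\eta=\xi\chi+\sqrt{1-\xi^2}\sqrt{1-\chi^2}\,\Gamma$ appears in the paper just \emph{before} the proposition rather than in its proof, and the paper's proof itself is brief: it invokes Corollary (\ref{Gram matrices of pairs of chains ar invariants of $U$}) for the invariance in the generic case and the identification $X_3=\tilde X_3$ by construction in the degenerate case---exactly as you do. Your explicit justification that the change of basis $C$ lies in $SO(2)$ (via the common sign convention $<X_3,IX_4>=<\tilde X_3,I\tilde X_4>=\cos\theta^I>0$) makes precise what the paper only states in one line (``being $\omega^I$-standard bases, are consistently oriented''). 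One remark the paper adds and you omit: when $\eta=-1$ but $\xi,\chi\neq\pm1$, formula (\ref{cos phi}) is still defined and yields $\Gamma=-1$, differing in sign from the value $\Gamma=1$ obtained via Definition (\ref{The chains if at least one among the 3 invariants is one or minus one}); the paper notes this is due to a different sign choice for $\tilde X_4$ in the two constructions.
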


\begin{proof}
In case none among $\xi,\chi,\eta$ is equal to $\pm 1$, the invariance of the pair $(\Gamma,\Delta)$ follows from Corollary  ( \ref{Gram matrices of pairs of chains ar invariants of $U$}).
In case at least one among $\xi,\chi,\eta$ is equal to $\pm 1$, by construction in all chains given in the Definitions
 (\ref{The chains if at least one among the 3 invariants is one or minus one}) and (\ref{The chains if all the 3 invariants is one or minus one})
 it is  $X_3=\tilde X_3$. We remark that if   $\eta=-1$ and none among $\xi,\chi$ equals $\pm 1$,  the (\ref{cos phi}) is perfectly defined giving   $\Gamma=-1$. The difference in sign follows from the different sign of $\tilde X_4$ in the two constructions.
 \end{proof}

From Corollary(\ref{Gram matrices of pairs of chains ar invariants of $U$}), which states in particular   the   invariance of the triple $(\xi,\chi,\eta)$ and of the pair $(\Gamma, \Delta)$,  one has the
\begin{prop} \label{invariance of $C_{IJ}$ and $C_{IK}$ w.r.t. any leading vector}
The matrices $C_{IJ}$ and $C_{IK}$  given in  (\ref{canonical matrices $C_{IJ}$ $C_{IK}$ and of 4-dimensional isoclinic subspace w.r.t. the associated chains})
w.r.t. the   chains $\{X_i \}, \{Y_i \}$ and $\{X_i \}, \{Z_i \}$ centered on a common leading vector  are invariant of $U$.
\end{prop}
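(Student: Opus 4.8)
The plan is to reduce the claim to the invariance of the four scalars out of which both matrices are assembled. Reading off the explicit expressions in (\ref{canonical matrices $C_{IJ}$ $C_{IK}$ and of 4-dimensional isoclinic subspace w.r.t. the associated chains}), every entry of $C_{IJ}$ is an algebraic function of $\xi$ alone, while every entry of $C_{IK}$ is an algebraic function of the triple $\chi,\Gamma,\Delta$. Hence it is enough to verify that none of $\xi,\chi,\Gamma,\Delta$ depends on the choice of leading vector $X_1$; once this is known, every matrix entry is constant as $X_1$ ranges over the unit sphere of $U$, which is exactly the assertion that $C_{IJ}$ and $C_{IK}$ are invariants of $U$.

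First I would invoke Proposition (\ref{invariance of <X_2,Y_2>}), which already gives that the cosines $\xi=<X_2,Y_2>$, $\chi=<X_2,Z_2>$, $\eta=<Y_2,Z_2>$ are invariants of $U$. For the remaining pair $(\Gamma,\Delta)$ I would quote Proposition (\ref{Gamma is an invariant of an isoclinic 4 dimensional subspace 1}): when none of $\xi,\chi,\eta$ equals $\pm 1$, the quantity $\Gamma$ is the explicit function of $(\xi,\chi,\eta)$ displayed in (\ref{cos phi}) and therefore inherits their invariance, whereas if one of them equals $\pm 1$ the construction forces $\Gamma=1$ and $\Delta=0$. A more conceptual route, valid whenever $U$ is not 2-planes decomposable, is to note that $C_{IJ}$ and $C_{IK}$ are precisely the relative Gram matrices of the chain pairs $(\{X_i\},\{Y_i\})$ and $(\{X_i\},\{Z_i\})$ inside $\Sigma(X_1)$, so their invariance is immediate from Corollary (\ref{Gram matrices of pairs of chains ar invariants of $U$}); that corollary itself rests on Proposition (\ref{orthogonal maps between chains belong to Sp(n)}), where the transitivity of $Sp(n)$ on chains together with the fact that $Sp(n)\subset SO(4n)$ preserves all inner products between chain vectors does the work.

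The step that requires genuine care, and which I expect to be the main obstacle, is the degenerate case in which $U$ is 2-planes decomposable. Here the map $\Sigma$ is not even defined (Proposition (\ref{Signa is defined only for non decompasable 2planes})), so the $Sp(n)$-transitivity argument is unavailable and one cannot appeal to Corollary (\ref{Gram matrices of pairs of chains ar invariants of $U$}) directly. My approach in this case is to fall back on the scalar route: decomposability forces $\xi,\chi,\eta\in\{\pm 1\}$ with $\eta=\xi\chi$, whence $\Gamma=1$ and $\Delta=0$, and both matrices collapse to fixed sign-patterns determined solely by the invariant signs $(\xi,\chi)$ supplied by Proposition (\ref{invariance of <X_2,Y_2>}). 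Combining the non-degenerate and degenerate cases then yields the invariance of $C_{IJ}$ and $C_{IK}$ uniformly over all leading vectors.
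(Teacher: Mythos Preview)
Your argument is correct and matches the paper's own justification, which consists of the single sentence preceding the proposition: ``From Corollary (\ref{Gram matrices of pairs of chains ar invariants of $U$}), which states in particular the invariance of the triple $(\xi,\chi,\eta)$ and of the pair $(\Gamma,\Delta)$, one has the [Proposition].'' You are in fact more careful than the paper, since you explicitly treat the 2-planes decomposable case where Corollary (\ref{Gram matrices of pairs of chains ar invariants of $U$}) does not directly apply, falling back on the scalar invariants from Proposition (\ref{invariance of <X_2,Y_2>}) and Proposition (\ref{Gamma is an invariant of an isoclinic 4 dimensional subspace 1}) instead.
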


 We underline the following fact.
\begin{remk}
The pair  $(\Gamma,\Delta)$  in not an intrinsic property of $U$.   
  \end{remk}
 As an  example to show the dependance of $(\Gamma,\Delta)$  from the admissible basis, we consider the $I$-complex 4-dimensional subspace with quaternionic K\"{a}hler angle $\cos \theta$ of Remark (\ref{dependence of $(xi, chi, eta)$ on the admissible basis}).  In that case, if $ \alpha_1=0$  it is  $\xi=\chi=0$ and $\Gamma= \eta=\beta_1 \gamma_1 (1- \cos \theta)$ which clearly depends  on $\beta_1$ and $\gamma_1$.

Since if   $U$ is orthogonal one has $(\Gamma,\Delta)=(1,0)$,  we can state the following
\begin{coro} \label{some of the pair $(U,IU), (U,JU),(U,KU)$ is strictly orthogonal}
In case one among $\xi,\chi,\eta$ is equal to $\pm 1$,  the matrices $C_{IJ}$ and $C_{IK}$ are given in (\ref{canonical matrix $A^c$ of 4-dimensional isoclinic subspace w.r.t. a non adapted basis}) i.e.  $C_{IK}= C'_{IK}$.
In particular
\begin{enumerate}
\item If  $U$ is $I$ or $J$-orthogonal   one has   $C_{IJ}= Id$.
\item If  $U$ is $K$-orthogonal one has  $C_{IK}= Id$.
\item In case of double or triple orthogonality  one has  $C_{IJ}=C_{IK}= Id$.

\end{enumerate}
\end{coro}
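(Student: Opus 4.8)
The plan is to deduce every assertion from the single observation that, in the situation considered, the invariant pair collapses to $(\Gamma,\Delta)=(1,0)$, after which both canonical matrices can be read off directly from the general expression (\ref{canonical matrices $C_{IJ}$ $C_{IK}$ and of 4-dimensional isoclinic subspace w.r.t. the associated chains}).

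First I would prove the equality $C_{IK}=C'_{IK}$. Assume one among $\xi,\chi,\eta$ equals $\pm 1$. By Proposition (\ref{Gamma is an invariant of an isoclinic 4 dimensional subspace 1}) this forces $\Gamma=1$. Since $C$ is the orthogonal matrix of the change between the two $\omega^I$-standard (hence orthonormal) bases $(X_3,X_4)$ and $(\tilde X_3,\tilde X_4)$ of the common $2$-plane $L(X_3,X_4)=L(\tilde X_3,\tilde X_4)$, its entries satisfy $\Gamma^2+\Delta^2=1$, so $\Gamma=1$ immediately gives $\Delta=0$. Substituting $(\Gamma,\Delta)=(1,0)$ into the third and fourth rows of $C_{IK}$ in (\ref{canonical matrices $C_{IJ}$ $C_{IK}$ and of 4-dimensional isoclinic subspace w.r.t. the associated chains}) turns them into $(0,0,1,0)$ and $(0,\sqrt{1-\chi^2},0,\chi)$, whence $C_{IK}$ coincides with the matrix $C'_{IK}$ of (\ref{canonical matrix $A^c$ of 4-dimensional isoclinic subspace w.r.t. a non adapted basis}); as $C_{IJ}$ already has that shape, the first assertion follows.

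I would then obtain the three itemized statements by reading off, in each orthogonal case, which of $\xi,\chi,\eta$ is forced to equal $1$ through the sign conventions of Claim (\ref{principal vectors asociated to $X_1$ in case some of the pairs (U,IU), (U,JU),(U,KU) are strictly orthogonal}). If $U$ is $I$- or $J$-orthogonal the Claim sets $X_2=Y_2$, so $\xi=1$, and putting $\xi=1$ into $C_{IJ}$ annihilates the entries $\mp\sqrt{1-\xi^2}$ and leaves $C_{IJ}=Id$; if $U$ is $K$-orthogonal the Claim gives $X_2=Z_2$, so $\chi=1$, and since we are already in the case $(\Gamma,\Delta)=(1,0)$, setting $\chi=1$ in $C'_{IK}$ yields $C_{IK}=Id$. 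For double or triple orthogonality the Claim forces $X_2=Y_2=Z_2$, hence $\xi=\chi=\eta=1$ (compatibly with $\eta=\xi\chi$), so both matrices collapse simultaneously to $Id$.

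I do not expect a real obstacle here, since once Proposition (\ref{Gamma is an invariant of an isoclinic 4 dimensional subspace 1}) is granted the argument is purely bookkeeping. The two points deserving attention are the passage from $\Gamma=1$ to $\Delta=0$, which rests on recognizing $C$ as an orthogonal $2\times 2$ matrix, and the correct matching of each orthogonality hypothesis with the invariant it sends to $1$, which must be extracted from the conventions recorded in Claim (\ref{principal vectors asociated to $X_1$ in case some of the pairs (U,IU), (U,JU),(U,KU) are strictly orthogonal}).
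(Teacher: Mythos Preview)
Your proposal is correct and follows essentially the same route as the paper: the corollary is stated right after the remark ``Since if $U$ is orthogonal one has $(\Gamma,\Delta)=(1,0)$'', and the paper leaves it without further proof precisely because it is the bookkeeping you describe---Proposition~\ref{Gamma is an invariant of an isoclinic 4 dimensional subspace 1} gives $\Gamma=1$, the orthogonality of the $2\times 2$ change-of-basis matrix $C$ forces $\Delta=0$, and substitution into (\ref{canonical matrices $C_{IJ}$ $C_{IK}$ and of 4-dimensional isoclinic subspace w.r.t. the associated chains}) together with Claim~\ref{principal vectors asociated to $X_1$ in case some of the pairs (U,IU), (U,JU),(U,KU) are strictly orthogonal} yields the three items. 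The only cosmetic difference is that the paper obtains $\Delta=0$ directly from the construction in Definitions~\ref{The chains if at least one among the 3 invariants is one or minus one} and~\ref{The chains if all the 3 invariants is one or minus one}, where one literally sets $X_3=\tilde X_3$, so $\Delta=\langle X_4,\tilde X_3\rangle=\langle X_4,X_3\rangle=0$ by orthonormality; your route via $\Gamma^2+\Delta^2=1$ is equivalent.
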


Following the definition  given in \cite{Vacpreprint}, we give the
\begin{defi} \label{definiton of canonical bases and matrices of Is(4)}
 Let $U \in \mathcal{IC}^4$. Fixed  an admissible basis $(I,J,K)$, for any leading vector $X_1$, we call the chains $\{X_i\},\{Y_i\},\{Z_i\}$ (resp. the matrices $C_{IJ}$ and $C_{IK}$)  determined above \textbf{canonical bases}  (resp. \textbf{canonical matrices}) of the subspaces  $U \in \mathcal{IC}^4$.
\end{defi}
 Clearly for any leading vector we have  a different set of canonical bases. As explained  beforehand,   we denote them "canonical" since, by the invariance of  $(\xi,\chi,\eta,\Delta)$,  the matrices $C_{IJ}$ and $C_{IK}$ are invariants of $U \in \mathcal{IC}^4$ having to the unique forms  given in (\ref{canonical matrices $C_{IJ}$ $C_{IK}$ and of 4-dimensional isoclinic subspace w.r.t. the associated chains})
 regardless the leading vector $X_1$.  We summarize the results obtained in the following
\begin{prop} \label{canonical matrices of any  subspace of ${IC}^(4)$}
Fixed an admissible basis $(I,J,K)$, to any $U \in \mathcal{IC}^4$   we can associate  the orthogonal canonical matrices $C_{IJ}$ and $C_{IK}$   given in
(\ref{canonical matrices $C_{IJ}$ $C_{IK}$ and of 4-dimensional isoclinic subspace w.r.t. the associated chains})
 representing the mutual position of the canonical (standard) bases $\{X_i\},\{Y_i\},\{Z_i\}$ of $\omega^I,\omega^J,\omega^K$. Such matrices depend  on
the triple of invariants $(\xi,\chi,\eta)$ and on the sign of $\Delta=<X_4,\tilde X_3>= \pm \sqrt{1-\Gamma^2}$  where $\Gamma= \Gamma(\xi,\chi,\eta)$ is  given in (\ref{cos phi}) if  none among $\xi,\chi,\eta$ is equal to $\pm 1$  else $\Gamma=1$. The second case happens in particular    if  $U$ is orthogonal or a 2-planes decomposable subspace. 
\end{prop}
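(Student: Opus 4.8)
The plan is to assemble the statement from the constructions and invariance results already in place, organizing the argument by whether the triple $(\xi,\chi,\eta)$ is generic (none of its entries equal to $\pm 1$) or degenerate (at least one entry equal to $\pm 1$, which by Claim~(\ref{principal vectors asociated to $X_1$ in case some of the pairs (U,IU), (U,JU),(U,KU) are strictly orthogonal}) covers every orthogonal and, together with the remark that $2$-planes decomposability forces $\xi,\chi,\eta\in\{\pm1\}$, every $2$-planes decomposable $U$). Since the two families of chains were introduced separately, in Definition~(\ref{associated chains}) on one side and in Definitions~(\ref{The chains if at least one among the 3 invariants is one or minus one}) and~(\ref{The chains if all the 3 invariants is one or minus one}) on the other, I would verify the claimed matrix forms and their invariance in each family and then observe that the degenerate expressions are the specialization of the generic ones.

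First I would treat the generic case. For any unitary leading vector $X_1$, the six chains of $\Sigma(X_1)$ are completely determined by Definition~(\ref{associated chains}), and Proposition~(\ref{equality of thirs element $X_3=Y_3$ of the chains}) identifies their common third vector, so that $\{X_i\},\{Y_i\},\{Z_i\}$ are well defined standard bases of $\omega^I,\omega^J,\omega^K$. Substituting the explicit expressions $Y_2=\xi X_2+\sqrt{1-\xi^2}\,X_4$, $Z_2=\chi X_2+\sqrt{1-\chi^2}\,\tilde X_4$ and their companions into $C_{IJ}=(<X_i,Y_j>)$ and $C_{IK}=(<X_i,Z_j>)$ fills in all entries except the lower-right block of $C_{IK}$, which is controlled by the rotation $C$ relating the consistently oriented $\omega^I$-standard pairs $(X_3,X_4)$ and $(\tilde X_3,\tilde X_4)$ of the common $2$-plane $L(X_3,X_4)=L(\tilde X_3,\tilde X_4)$. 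The key computation is the evaluation of $\Gamma=<X_3,\tilde X_3>=<X_4,\tilde X_4>$: expanding $<Y_2,Z_2>=\eta$ in the above expressions and using $<X_2,X_4>=<X_2,\tilde X_4>=0$ gives $\eta=\xi\chi+\sqrt{1-\xi^2}\sqrt{1-\chi^2}\,\Gamma$, which inverts to formula~(\ref{cos phi}); with $\Delta=\pm\sqrt{1-\Gamma^2}$ this produces the matrices in~(\ref{canonical matrices $C_{IJ}$ $C_{IK}$ and of 4-dimensional isoclinic subspace w.r.t. the associated chains}).

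Next I would argue that these matrices are genuine invariants of $U$, independent of the leading vector, which is where the preceding results pay off. Proposition~(\ref{orthogonal maps between chains belong to Sp(n)}) realizes any change of leading vector $X_1\mapsto Y_1$ as an element of $Sp(n)$ carrying $\Sigma(X_1)$ to $\Sigma(Y_1)$, so by Corollary~(\ref{Gram matrices of pairs of chains ar invariants of $U$}) every pairwise Gram matrix, in particular $C_{IJ}$ and $C_{IK}$, is the same for all leading vectors. Combined with the invariance of $(\xi,\chi,\eta)$ from Proposition~(\ref{invariance of <X_2,Y_2>}) and of $(\Gamma,\Delta)$ from Proposition~(\ref{Gamma is an invariant of an isoclinic 4 dimensional subspace 1}), this establishes that the canonical matrices depend only on $(\xi,\chi,\eta)$ and on the sign of $\Delta$.

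Finally I would dispose of the degenerate cases by means of Definitions~(\ref{The chains if at least one among the 3 invariants is one or minus one}) and~(\ref{The chains if all the 3 invariants is one or minus one}), in which the construction forces $X_3=\tilde X_3$ outright, whence $\Gamma=1$, $\Delta=0$ and, by Corollary~(\ref{some of the pair $(U,IU), (U,JU),(U,KU)$ is strictly orthogonal}), $C_{IK}=C'_{IK}$ of~(\ref{canonical matrix $A^c$ of 4-dimensional isoclinic subspace w.r.t. a non adapted basis}), collapsing to the identity under double or triple orthogonality. I expect the only real obstacle to be sign bookkeeping rather than substance: one must check that the conventions fixing $X_4$ by consistent orientation make the sign of $\Delta$ well defined and compatible across the two constructions, and that the limiting value $\Gamma=1$ delivered by formula~(\ref{cos phi}) when, say, $\eta=-1$ with $\xi,\chi\neq\pm 1$ is reconciled with the sign of $\tilde X_4$ used in the explicit chains, exactly the point already flagged in Proposition~(\ref{Gamma is an invariant of an isoclinic 4 dimensional subspace 1}).
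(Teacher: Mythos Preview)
Your proposal is correct and follows essentially the same approach as the paper: this proposition is explicitly introduced as a summary of the results already obtained, and you rightly assemble it from Definitions~(\ref{associated chains}), (\ref{The chains if at least one among the 3 invariants is one or minus one}), (\ref{The chains if all the 3 invariants is one or minus one}), the computation leading to~(\ref{cos phi}), and the invariance results of Propositions~(\ref{orthogonal maps between chains belong to Sp(n)}), (\ref{invariance of <X_2,Y_2>}), (\ref{Gamma is an invariant of an isoclinic 4 dimensional subspace 1}) and Corollary~(\ref{Gram matrices of pairs of chains ar invariants of $U$}). One small correction: in your final paragraph, the formula~(\ref{cos phi}) at $\eta=-1$ (with $\xi,\chi\neq\pm1$, whence $\xi=-\chi$) actually yields $\Gamma=-1$, not $\Gamma=1$; the reconciliation with the construction's $\Gamma=1$ via the sign of $\tilde X_4$ is exactly the point made in Proposition~(\ref{Gamma is an invariant of an isoclinic 4 dimensional subspace 1}), as you note.
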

Then according to Theorem (\ref{main_theorem 1_ existance of canonical bases with same mutual position}) we state the
\begin{teor}
The invariants $(\xi,\chi,\eta, \Delta)$ together with the angles $(\theta^I, \theta^J, \theta^K)$ determine the orbit of any $U \in \mathcal{IC}^4$. In particular if $U$ is orthogonal or 2-planes decomposable (in which case $(\Gamma,\Delta)=(1,0)$) the first set reduces to  the pair $(\xi, \chi)$.
\end{teor}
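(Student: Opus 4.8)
The plan is to obtain the statement as a direct synthesis of the master orbit criterion of Theorem \ref{main_theorem 1_ existance of canonical bases with same mutual position} with the explicit description of the canonical matrices already established for $\mathcal{IC}^4$. Recall that Theorem \ref{main_theorem 1_ existance of canonical bases with same mutual position} characterizes a common $Sp(n)$-orbit by two conditions: equality of the $I,J,K$ principal angles, and the existence of standard bases with identical relative Gram matrices $C_{IJ},C_{IK}$. Since Proposition \ref{canonical matrices of any  subspace of ${IC}^(4)$} exhibits $C_{IJ}$ and $C_{IK}$ in the closed form \eqref{canonical matrices $C_{IJ}$ $C_{IK}$ and of 4-dimensional isoclinic subspace w.r.t. the associated chains}, with $\Gamma=\Gamma(\xi,\chi,\eta)$ given by \eqref{cos phi} and $\Delta=\pm\sqrt{1-\Gamma^2}$, so that $(\xi,\chi,\eta)$ fixes $\Gamma$ and $|\Delta|$ while only the sign of $\Delta$ remains, the only thing left to do is to check that $(\theta^I,\theta^J,\theta^K)$ together with $(\xi,\chi,\eta,\Delta)$ both force and are forced by these two conditions.

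For the sufficiency I would argue as follows. Suppose $U,W\in\mathcal{IC}^4$ share $(\theta^I,\theta^J,\theta^K)$ and $(\xi,\chi,\eta,\Delta)$. Equality of the angles of isoclinicity is literally condition (1). For condition (2), fix leading vectors $X_1\in U$, $X_1'\in W$ and take the canonical bases of Definition \ref{definiton of canonical bases and matrices of Is(4)}; these are genuine triples in $\mathcal{B}(U)$ and $\mathcal{B}(W)$, and by Proposition \ref{canonical matrices of any  subspace of ${IC}^(4)$} their relative matrices depend only on $(\xi,\chi,\eta)$ and on the sign of $\Delta$. Hence the hypotheses give $C_{IJ}(U)=C_{IJ}(W)$ and $C_{IK}(U)=C_{IK}(W)$, and Theorem \ref{main_theorem 1_ existance of canonical bases with same mutual position} places $U,W$ in the same orbit.

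For the necessity I would check that every entry of the data is preserved by the real representation of $Sp(n)$. Any $g\in Sp(n)$ is an isometry commuting with $I,J,K$, hence preserves principal angles, fixing $(\theta^I,\theta^J,\theta^K)$; it also intertwines the assignment \eqref{$X_2,Y_2,Z_2$}, so that by Proposition \ref{invariance of <X_2,Y_2>} the cosines $\xi,\chi,\eta$ take the same value on $U$ and on $gU$; and the equivariance underlying Proposition \ref{orthogonal maps between chains belong to Sp(n)} shows that $g$ carries the chains of $U$ at $X_1$ onto the chains of $gU$ at $gX_1$, whence the sign of $\Delta=<X_4,\tilde X_3>$ is preserved as well. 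Thus subspaces in one orbit necessarily carry the same data, and the two implications together yield that the listed quantities determine the orbit.

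The degenerate reduction is then immediate: if $U$ is orthogonal or $2$-planes decomposable, Proposition \ref{Gamma is an invariant of an isoclinic 4 dimensional subspace 1} (together with the sign conventions adopted in the orthogonal case) forces at least one of $\xi,\chi,\eta$ to equal $\pm1$, whence $\Gamma=1$ and $(\Gamma,\Delta)=(1,0)$; moreover in the decomposable case $\eta=\xi\chi$, so neither $\eta$ nor $\Delta$ carries independent information and the set collapses to the pair $(\xi,\chi)$. I expect the only delicate point to be of a bookkeeping nature: one must verify that the orientation choices fixing the sign of $\Delta$ in the definition of the chains are applied to $U$ and $W$ in the same way, so that condition (2) of Theorem \ref{main_theorem 1_ existance of canonical bases with same mutual position} holds with literally equal matrices rather than merely orthogonally equivalent ones.
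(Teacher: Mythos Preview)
Your proposal is correct and follows essentially the same approach as the paper: the paper simply writes ``Then according to Theorem (\ref{main_theorem 1_ existance of canonical bases with same mutual position}) we state the'' before giving the theorem, i.e.\ it invokes the master orbit criterion immediately after Proposition~\ref{canonical matrices of any  subspace of ${IC}^(4)$} has pinned down $C_{IJ},C_{IK}$ as functions of $(\xi,\chi,\eta,\Delta)$. You have spelled out both implications (sufficiency via equal canonical matrices, necessity via $Sp(n)$-equivariance of the chain construction) more explicitly than the paper does, but the underlying argument is identical.
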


In particular, from the  Definition (\ref{The chains if at least one among the 3 invariants is one or minus one}) 
and the Corollary (\ref{some of the pair $(U,IU), (U,JU),(U,KU)$ is strictly orthogonal}) one has
\begin{prop} \label{orbit of orthogonal 4-dimensional subspaces} $ $
\begin{enumerate}
\item In case $U$ is $I$-orthogonal (resp. $J$-orthogonal, resp. $K$-orthogonal),
     the $Sp(n)$-orbit is characterized by  the triple  $(\chi, \theta^J,  \theta^K)$ (resp. $(\chi, \theta^I, \theta^K)$, resp. $(\xi, \theta^I, \theta^J)$).
\item \label{double orthogonality} In case of double orthogonality of $U$,
    the $Sp(n)$-orbit depends on  the only  non null cosine . 
\item \label{triple orthogonality} All subspaces with a triple orthogonality (i.e. all r.h.p. subspaces) share the same $Sp(n)$-orbit.
\end{enumerate}
\end{prop}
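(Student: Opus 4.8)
The plan is to deduce every assertion directly from the classification of $\mathcal{IC}^4$ just obtained: by Theorem \ref{main_theorem 1_ existance of canonical bases with same mutual position} a pair of subspaces in $\mathcal{IC}^4$ shares an $Sp(n)$-orbit precisely when they have the same principal angles $(\theta^I,\theta^J,\theta^K)$ and the same canonical matrices $C_{IJ},C_{IK}$. Since in the canonical bases these matrices are invariants determined by $(\xi,\chi,\eta,\Delta)$, the task in each orthogonality case is only to record which of these data are pinned to fixed values, and to verify that the surviving free parameters coincide with the triple claimed.

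For item (1) I would start from the single-orthogonality hypothesis. If $U$ is $I$-orthogonal then $\cos\theta^I=0$, so $\theta^I=\pi/2$ is constant; by Claim \ref{principal vectors asociated to $X_1$ in case some of the pairs (U,IU), (U,JU),(U,KU) are strictly orthogonal} the convention $X_2=Y_2$ forces $\xi=1$, whence $\eta=<Y_2,Z_2>=<X_2,Z_2>=\chi$ becomes redundant. Because $U$ is orthogonal, Proposition \ref{Gamma is an invariant of an isoclinic 4 dimensional subspace 1} gives $(\Gamma,\Delta)=(1,0)$, and Corollary \ref{some of the pair $(U,IU), (U,JU),(U,KU)$ is strictly orthogonal} then yields $C_{IJ}=Id$ together with $C_{IK}=C'_{IK}$ of (\ref{canonical matrix $A^c$ of 4-dimensional isoclinic subspace w.r.t. a non adapted basis}), which depends only on $\chi$. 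Hence the only free data are $\chi,\theta^J,\theta^K$, and Theorem \ref{main_theorem 1_ existance of canonical bases with same mutual position} gives the stated characterization. The $J$-orthogonal case is the mirror image (again $\xi=1$, $C_{IJ}=Id$, free data $\chi,\theta^I,\theta^K$), while the $K$-orthogonal case follows by the same argument with $\chi$ and $\xi$ exchanged: the Claim now forces $\chi=1$, so $\eta=\xi$, $C_{IK}=Id$, and the free data are $\xi,\theta^I,\theta^J$.

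For items (2) and (3) I would simply iterate this reasoning. Under double orthogonality two cosines vanish and the double-orthogonality part of Claim \ref{principal vectors asociated to $X_1$ in case some of the pairs (U,IU), (U,JU),(U,KU) are strictly orthogonal} forces $\xi=\chi=\eta=1$, so by Corollary \ref{some of the pair $(U,IU), (U,JU),(U,KU)$ is strictly orthogonal} both $C_{IJ}=C_{IK}=Id$; two of the angles are fixed at $\pi/2$, leaving only the single nonzero cosine free, which by Theorem \ref{main_theorem 1_ existance of canonical bases with same mutual position} proves the claim. Under triple orthogonality all three cosines vanish, $U$ is a r.h.p. subspace, the Claim forces $\xi=\chi=\eta=1$ and hence $C_{IJ}=C_{IK}=Id$, and all three angles equal $\pi/2$; thus all such subspaces carry identical invariants and lie in one orbit, in agreement with the earlier observation that all real Hermitian product subspaces form a single orbit.

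There is no genuinely hard analytic step: the proposition is a corollary of the $\mathcal{IC}^4$ classification. The only point needing care, and the one I would check most carefully, is the bookkeeping in item (1): verifying in each of the three single-orthogonality subcases that exactly one invariant is forced to $1$ (so that exactly one of $C_{IJ},C_{IK}$ collapses to the identity), that $\eta$ becomes a function of the surviving cosine and so drops out, and that the $\pi/2$-angle is genuinely constant and may therefore be omitted from the characterizing triple.
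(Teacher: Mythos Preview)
Your argument is correct and matches the paper's approach: the paper itself presents this proposition as an immediate consequence of Definition \ref{The chains if at least one among the 3 invariants is one or minus one} and Corollary \ref{some of the pair $(U,IU), (U,JU),(U,KU)$ is strictly orthogonal}, without a separate proof. You have simply written out the bookkeeping that the paper leaves implicit, and done so accurately in each of the three orthogonality subcases.
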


We complete the analysis of 4-dimensional isoclinic subspaces,  by giving some examples of orthogonal and non orthogonal cases.

Let consider an  \textbf{$I$-complex 4-dimensional subspace $U$}.
In \cite{articolo_2} we proved that $AU$ is the same   for any $A \in I^\perp \cap S(\mathcal{Q})$ and we called \textit{$I^\perp$-K\"{a}hler angle}  and denoted by  $\theta^{I^\perp}$ the angle of isoclinicity of the pair $(U,AU)$. Then w.r.t. an adapted basis $(I,J,K)$ one has $(\cos \theta^I,\cos \theta^J,\cos \theta^K)=(1,\theta^{I^\perp}, \theta^{I^\perp})$.  This is a typical elements of the set $\mathcal{IC}^4$.  In particular in the first example we consider the non orthogonal  case  i.e. the case when the $I^\perp$-K\"{a}hler angle of $U$  is not $\pi/2$. The results obtained thereof apply also to a \textbf{4-dimensional quaternionic subspace} which is a particular case of a non totally  $I$-complex subspace  characterized by  $(\cos \theta^I= \cos \theta^J= \cos \theta^K=1)$.
W.r.t. any admissible basis in the quaternionic case  $Pr^{AU} X_1=X_1, \; \forall A \in S(\mathcal{Q}), \; \forall X \in U$ then  $X_2=-IX_1, \; Y_2=-JX_1, \; Z_2=-KX_1$ which implies  $\xi=\chi=\eta=0$.

More generally, in \cite{articolo_2} we proved that the triple $(X_2,Y_2,Z_2)$ of a 4-dimensional  $I$-complex subspace $U$ is orthonormal iff the admissible basis is an adapted one.
  Then, w.r.t. any adapted basis $(I,J,K)$, in both $I$-complex and quaternionic case, $\xi=\chi=\eta=0$ which implies $\Gamma=<X_3,\tilde X_3>=0$.
To compute $\Delta$, we apply the formulas given in section (\ref{oriented bases}). One has
\[X_4=Y_2, \; Y_4= -X_2, \; \tilde X_4= Z_2,  \; Z_4= -X_2.\]
Furthermore \[\tilde X_3= -I^{-1} Pr^{IU}\tilde X_4=I\tilde X_4= IZ_2.\]
\[Y_2=\frac{J^{-1}Pr^{JU}X_1}{\cos \theta}=\frac{J^{-1}Pr^{KU}X_1}{\cos \theta}=J^{-1}KZ_2= -IZ_2.\] Then
\[ \Delta= <X_4,\tilde X_3>= -<IZ_2,IZ_2>=-1.\]

Therefore the set $(\xi,\chi,\eta,\Gamma,\Delta)=(0,0,0,0,-1)$ is an  invariant (resp. an intrinsic property) of an $I$-complex subspace (resp. quaternionic subspace).
The  chains $\{X_i \},\{Y_i \},\{\tilde X_i \},\{Z_i \}$ of an $I$-complex subspace with leading vector $X_1 \in U$ w.r.t. the adapted basis  $(I,J,K)$ are:
\[
\begin{array}{lllllll}
\{X_i \} &=&\{X_1,X_2,X_3,X_4 \}&=& \{X_1,-IX_1,Z_2,-IZ_2 \}&=&\{X_1,X_2,Z_2,Y_2\} \\
\{Y_i \}&=&\{X_1,Y_2,X_3,Y_4 \}&=&\{X_1,-IZ_2,Z_2,IX_1 \}&=&\{X_1,Y_2,Z_2,-X_2\}\\
\{\tilde X_i \}&=&\{X_1,X_2,\tilde X_3,\tilde X_4 \}&=&\{X_1,-IX_1,IZ_2,Z_2 \}&=&\{X_1,X_2,-Y_2,Z_2 \}\\
\{Z_i \}&=&\{X_1,Z_2,X_3,Z_4 \}&=&\{X_1,Z_2,IZ_2,IX_1\}&=&\{X_1,Z_2,-Y_2,-X_2\}
\end{array}
\]

In  particular for a quaternionic subspace it is $Y_2= -JX_1$ and $Z_2=-KX_1$ then
\[
\begin{array}{lll}
\{X_i \} &=& \{X_1,-IX_1,-KX_1,-JX_1 \}\\
\{Y_i \}&=&\{X_1,-JX_1,-KX_1,IX_1 \}\\
\{\tilde X_i \}&=&\{X_1,-IX_1,JX_1,-KX_1 \} \\
\{Z_i \}&=&\{X_1,-KX_1,JX_1,IX_1 \}
\end{array}
\]

W.r.t. the canonical bases $\{X_i \},\{Y_i \},\{Z_i \}$  the canonical matrices
(\ref{canonical matrices $C_{IJ}$ $C_{IK}$ and of 4-dimensional isoclinic subspace w.r.t. the associated chains})
 of an $I$-complex 4-dimensional subspace are
 \[
C_{IJ}=C'_{IK}
=\left(
\begin{array} {cccc}
1 & 0 & 0 & 0\\
0 & 0 & 0 & -1 \\
0 & 0 & 1 & 0 \\
0 & 1 & 0 & 0
\end{array}
\right),
\qquad \qquad
  C_{IK}=\left(
\begin{array} {cccc}
1 & 0 & 0 & 0\\
0 & 0 & 0 & -1\\
0 & 1   & 0 & 0 \\
0 & 0 & -1 & 0
\end{array}
\right).
\]

We give the following  Proposition whose result is in accordance with what stated  \cite{articolo_2}.
\begin{prop} All 4-dimensional quaternionic subspaces are in the same $Sp(n)$-orbit, whereas the $I^\perp$-K\"{a}hler angle $\theta^{I^\perp}=\theta^J=\theta^K$
determines the $Sp(n)$-orbit of an $I$-complex subspace where $(I,J,K)$ is an adapted basis.
\end{prop}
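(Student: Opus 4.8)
My plan is to combine the invariant computation carried out immediately above with the orbit criterion of Theorem \ref{main_theorem 1_ existance of canonical bases with same mutual position}, treating the quaternionic case by a direct transitivity argument. Recall that the preceding paragraphs produced, for a quaternionic subspace (with respect to any admissible basis) and for an $I$-complex subspace (with respect to an adapted basis), the common tuple $(\xi,\chi,\eta,\Gamma,\Delta)=(0,0,0,0,-1)$. Since the orbit of a $U\in\mathcal{IC}^4$ is pinned down by $(\theta^I,\theta^J,\theta^K)$ once $(\xi,\chi,\eta,\Delta)$ are fixed, in each case it remains only to identify the free angular data.

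For a quaternionic subspace $U=\mathcal{Q}X=L(X,IX,JX,KX)$ one has $\theta^I=\theta^J=\theta^K=0$, so every orbit invariant is already determined. I would in fact prove directly that there is a single orbit: $Sp(n)$ commutes with $I,J,K$ and acts transitively on the unit sphere of $V\cong\mathbb{H}^n$ (the transitivity invoked in Proposition \ref{orthogonal maps between chains belong to Sp(n)}); hence for unit vectors $X,X'$ any $g\in Sp(n)$ with $gX=X'$ gives $g(\mathcal{Q}X)=\mathcal{Q}(gX)=\mathcal{Q}X'$. Thus all characteristic lines share one orbit, in agreement with Theorem \ref{main_theorem 1_ existance of canonical bases with same mutual position}.

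For an $I$-complex $U$ in an adapted basis the tuple is $(\theta^I,\theta^J,\theta^K)=(0,\theta^{I^\perp},\theta^{I^\perp})$ together with $(\xi,\chi,\eta,\Delta)=(0,0,0,-1)$, so the only free datum is $\theta^{I^\perp}$; the orbit criterion then says $\theta^{I^\perp}$ determines the orbit, while conversely $\theta^{I^\perp}$ is an $Sp(n)$-invariant as a principal angle. To compare two such subspaces rigorously I would pass to a \emph{normal form}: for a unit $X_1\in U$ the $I$-invariant orthogonal splitting $U=\mathbb{C}_I X_1\oplus(\mathbb{C}_I X_1)^{\perp_U}=\mathbb{C}_I X_1\oplus\mathbb{C}_I Z_2$ writes $U$ as a sum of two $I$-complex lines, and evaluating $\cos^2\theta^K$ from the first row of $\omega^K$ as in Proposition \ref{general form for the matrix of projector omega^A of an isoclinic 4 dimensional subspace} gives $\cos\theta^{I^\perp}=|Pr_{\mathbb{H}X_1}Z_2|$, a quantity unchanged under the reparametrization $Z_2\mapsto Z_2 z$, $z\in\mathbb{C}_I$. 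Given two $I$-complex subspaces with equal $\theta^{I^\perp}$, I would move $X_1$ onto $X_1'$ by transitivity and then use the point stabilizer $Sp(n-1)$ of $X_1'$, acting transitively on the spheres of $(\mathbb{H}X_1')^\perp$, together with a suitable phase $z$, to send $Z_2$ into $\mathbb{C}_I Z_2'$; this produces $g\in Sp(n)$ with $gU=U'$.

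The principal obstacle is the basis dependence recorded in Remark \ref{dependence of $(xi, chi, eta)$ on the admissible basis}: since $(\xi,\chi,\eta)$ vary with the hypercomplex basis and adaptedness is attached to a single subspace, two given $I$-complex subspaces need not admit a common adapted basis, so the fixed-basis criterion cannot be applied to them verbatim. The direct argument circumvents this by isolating the one intrinsic scalar $\cos\theta^{I^\perp}=|Pr_{\mathbb{H}X_1}Z_2|$ and checking that the residual freedom --- the $I$-phase of $Z_2$ and the $Sp(n-1)$-action on the quaternionic complement of $X_1$ --- is exactly enough to match the remaining data. Verifying that the phase rotation keeps $U$ fixed while rotating the $\mathrm{span}(JX_1,KX_1)$-component of $Z_2$, and that $Sp(n-1)$ then absorbs the complementary component of $Z_2$ orthogonal to $\mathbb{H}X_1$, is the delicate point; it rests on the transitivity of $Sp(n)$ and of its point stabilizers on the relevant spheres.
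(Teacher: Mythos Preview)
Your argument is correct, but the ``principal obstacle'' you identify is illusory, and this leads you to a longer route than the paper takes. The paper simply reads off the result from the preceding computation: having shown that for an adapted basis $(I,J,K)$ every $I$-complex $U$ has $(\xi,\chi,\eta,\Gamma,\Delta)=(0,0,0,0,-1)$ and hence the fixed canonical matrices $C_{IJ},C_{IK}$ displayed just above, Theorem~\ref{main_theorem 1_ existance of canonical bases with same mutual position} gives the orbit directly from $(\theta^I,\theta^J,\theta^K)=(0,\theta^{I^\perp},\theta^{I^\perp})$; the quaternionic case is the specialization $\theta^{I^\perp}=0$.

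Your worry that two $I$-complex subspaces ``need not admit a common adapted basis'' is misplaced: both are complex for the \emph{same} $I$ (indeed $Sp(n)$ commutes with $I$, so $I$-complexity is an orbit invariant), and ``adapted'' here means only that the hypercomplex basis begins with $I$. Any such $(I,J,K)$ is adapted to both subspaces simultaneously, and Remark~\ref{dependence of $(xi, chi, eta)$ on the admissible basis} itself shows (setting $\alpha_1=1$, hence $\beta_1=\gamma_1=0$) that $(\xi,\chi,\eta)=(0,0,0)$ is independent of which $J,K$ one picks orthogonal to $I$. So Theorem~\ref{main_theorem 1_ existance of canonical bases with same mutual position} applies verbatim with a single fixed basis.

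That said, your direct transitivity proof for quaternionic lines is cleaner than invoking the full machinery, and your normal-form argument for $I$-complex subspaces (splitting $U=\mathbb{C}_I X_1\oplus\mathbb{C}_I Z_2$ and using $Sp(n-1)$ on $(\mathbb{H}X_1)^\perp$ together with an $I$-phase) is a valid alternative that makes the geometry explicit. It buys independence from the canonical-matrix formalism at the cost of some extra verification; the paper's route buys brevity by cashing in the invariant computation already done.
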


We give a pair of examples for the  orthogonal case where we recall $(\Gamma,\Delta)=(1,0)$:  we  consider \textbf{4-dimensional totally complex subspaces} and \textbf{4-dimensional real Hermitian product  subspaces}.

In the first case $(\cos \theta^I=1, \cos \theta^J=\cos \theta^K=\theta^{I^\perp}=0)$ then the 4-dimensional totally complex subspaces represent a case of  double orthogonality. According to point (\ref{double orthogonality}) of the Proposition (\ref{orbit of orthogonal 4-dimensional subspaces}), $\xi=\chi=\eta=1$, $C_{IJ}= C_{IK}= Id$ and the $Sp(n)$-orbit is determined by the only non null cosine of the angle of isoclinicity which equals 1 for all totally complex subspaces.   Therefore
\begin{prop}
  Let $I \in S(\mathcal{Q})$.  All 4-dimensional  totally $I$-complex subspaces share the same $Sp(n)$-orbit.
\end{prop}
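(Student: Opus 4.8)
The plan is to show that every $4$-dimensional totally $I$-complex subspace carries identical invariants, and then to conclude via the orbit-classification Theorem (\ref{main_theorem 1_ existance of canonical bases with same mutual position}). First I would extend the given $I$ to an admissible basis $(I,J,K)$ and read off the defining data. A totally $I$-complex subspace $U$ satisfies $U=IU$, so $\theta^I=0$ and $\cos\theta^I=1$; moreover $U\perp JU$, and since $I$ preserves both $U$ and $U^\perp$ one has $KU=IJU\subseteq I(U^\perp)=U^\perp$, whence $U\perp KU$ as well. Thus such a $U$ is a case of \emph{double orthogonality} (it is both $J$- and $K$-orthogonal), and the triple of isoclinicity angles is forced to be $(\theta^I,\theta^J,\theta^K)=(0,\pi/2,\pi/2)$ for every member of the family.

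Next I would pin down the remaining invariants using the degenerate-case bookkeeping established above. Because $\cos\theta^J=\cos\theta^K=0$, the convention of the Claim (\ref{principal vectors asociated to $X_1$ in case some of the pairs (U,IU), (U,JU),(U,KU) are strictly orthogonal}) applies: one may choose $X_2=Y_2=Z_2$, which forces $\xi=\chi=\eta=1$. By the double-orthogonality clause of Corollary (\ref{some of the pair $(U,IU), (U,JU),(U,KU)$ is strictly orthogonal}) this yields canonical matrices $C_{IJ}=C_{IK}=Id$ (equivalently $(\Gamma,\Delta)=(1,0)$). Hence the entire invariant list $(\theta^I,\theta^J,\theta^K,\xi,\chi)$ is completely determined, with no free parameter remaining.

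Finally I would invoke Theorem (\ref{main_theorem 1_ existance of canonical bases with same mutual position}): a pair of subspaces lies in the same $Sp(n)$-orbit iff they share their $I,J,K$ principal angles and admit canonical bases with identical Gram matrices $C_{IJ},C_{IK}$. Since any two $4$-dimensional totally $I$-complex subspaces have the same angles $(0,\pi/2,\pi/2)$ and the same canonical matrices $C_{IJ}=C_{IK}=Id$, they belong to one and the same $Sp(n)$-orbit, as claimed.

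I expect no serious obstacle here, since the preceding propositions have already done the heavy lifting; the only point requiring care is the logical step that double orthogonality \emph{determines} $\xi$ and $\chi$ rather than merely constraining them, which is precisely what the normalisation convention in the Claim supplies. A more self-contained alternative would bypass the canonical-matrix machinery entirely: starting from any unit $X_1\in U$, the frame $(X_1, IX_1, W, IW)$ with $W$ a unit vector in $U\cap L(X_1,IX_1)^\perp$ is an orthonormal $I$-adapted basis of $U$, and by the transitivity of $Sp(n)$ on such frames one can map any two totally $I$-complex subspaces onto each other by an element of $Sp(n)$, yielding the orbit equality directly.
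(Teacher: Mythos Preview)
Your argument is correct and follows essentially the same route as the paper: identify the totally $I$-complex case as the double-orthogonality case $(\cos\theta^I,\cos\theta^J,\cos\theta^K)=(1,0,0)$, invoke the normalisation of Claim~\ref{principal vectors asociated to $X_1$ in case some of the pairs (U,IU), (U,JU),(U,KU) are strictly orthogonal} to force $\xi=\chi=\eta=1$, read off $C_{IJ}=C_{IK}=Id$ from Corollary~\ref{some of the pair $(U,IU), (U,JU),(U,KU)$ is strictly orthogonal}, and conclude via Theorem~\ref{main_theorem 1_ existance of canonical bases with same mutual position} (equivalently Proposition~\ref{orbit of orthogonal 4-dimensional subspaces}\,(\ref{double orthogonality})). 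The paper additionally supplies a second, independent proof: it computes the $\mathbb{H}$-valued Hermitian product matrix $H|_U$ with respect to any $\omega^I$-standard basis and observes that it is the same fixed matrix for every totally $I$-complex $U$, which by a criterion from \cite{Vacpreprint} already forces the subspaces into a single $Sp(n)$-orbit. Your closing ``self-contained alternative'' via transitivity of $Sp(n)$ on $I$-adapted orthonormal $4$-frames is in the same spirit as this second proof and is a perfectly legitimate shortcut.
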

 This is accordance with the result that appear in \cite{articolo_2}. We give an additional prove of this fact.
In \cite{Vacpreprint} we proved that a pair of subspaces $U,W$  are in the same orbit if there exist a basis in $U$ and one in $W$ w.r.t. which the  matrices $H$ of the Hermitian products  are the same.  W.r.t. a standard basis $B=\{X_1,X_2,X_3,X_4\}$ of $\omega^I|_U$, computing the Hermitian product given in  (\ref{Hermitian product in an Hermitian quaternionic vector space}), one has
 \[
H|_U=\left(
 \begin{array} {cccc}
 1 & i & 0 & 0\\
 -i & 1 & 0 & 0\\
 0 & 0 & 1 & i\\
 0 & 0 & -i & 1
  \end{array}
  \right)
 \]
 which is clearly  independent from $B$.

The case of a \textbf{4-dimensional real Hermitian product  subspaces} (r.h.p.s.), is the only  case of triple orthogonality being $(\cos \theta^I,\cos \theta^J,\cos \theta^K)=(0,0,0)$.
Applying the results stated in point (\ref{triple orthogonality})  of the Proposition (\ref{orbit of orthogonal 4-dimensional subspaces}), the canonical matrices $C_{IJ}$ and $C_{IK}$ of all r.h.p.  subspaces w.r.t. any admissible basis $(I,J,K)$ and any orthonormal (standard and canonical)  basis are  the identity matrix.  We have then  the confirmation that
\begin{prop} \label{canonical bases of r.h.p.subspaces}
 All 4-dimensional r.h.p. subspaces share the same $Sp(n)$-orbit.
\end{prop}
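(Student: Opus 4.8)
The plan is to exhibit, for a $4$-dimensional r.h.p. subspace $U$, a single standard basis with respect to which both canonical matrices reduce to the identity, and then to invoke Theorem~\ref{main_theorem 1_ existance of canonical bases with same mutual position}. First I would unwind the definition: $U$ being a real Hermitian product subspace means $U \perp AU$ for every $A \in S(\mathcal{Q})$, so $<X,AY>=0$ for all $X,Y \in U$ and all $A$, and hence the restricted K\"{a}hler forms vanish identically, $\omega^I|_U=\omega^J|_U=\omega^K|_U=0$. Equivalently all three angles of isoclinicity equal $\pi/2$, so this is precisely the triple-orthogonality situation already isolated in point (\ref{triple orthogonality}) of Proposition~\ref{orbit of orthogonal 4-dimensional subspaces}. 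In particular, for any two r.h.p. subspaces $U,W$ all principal angles of the pairs $(U,IU),(U,JU),(U,KU)$ and of $(W,IW),(W,JW),(W,KW)$ are $\pi/2$, so condition (1) of Theorem~\ref{main_theorem 1_ existance of canonical bases with same mutual position} is automatically met.

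Next I would verify condition (2) of that theorem. Since the standard form of a skew-symmetric form that is identically zero is the zero matrix, \emph{every} orthonormal basis of $U$ is simultaneously a standard basis of $\omega^I|_U$, $\omega^J|_U$ and $\omega^K|_U$. I would therefore choose one orthonormal basis $\{X_i\}$ of $U$ and take it as the common standard basis of all three forms, so that in the notation of the theorem $\{X_i\}=\{Y_i\}=\{Z_i\}$. With this choice the Gram matrices of the relative position collapse to $C_{IJ}=(<X_i,Y_j>)=Id$ and $C_{IK}=(<X_i,Z_j>)=Id$; carrying out the identical construction in $W$ gives $C_{IJ}'=C_{IK}'=Id$. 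Hence $C_{IJ}=C_{IJ}'$ and $C_{IK}=C_{IK}'$, and condition (2) holds as well.

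Finally, both conditions of Theorem~\ref{main_theorem 1_ existance of canonical bases with same mutual position} being satisfied, I would conclude that $U$ and $W$ lie in the same $Sp(n)$-orbit, which is the assertion. The argument carries essentially no obstacle beyond bookkeeping: the one point meriting care is the observation that the vanishing of $\omega^A|_U$ renders the standard-basis requirement vacuous, so a single orthonormal basis serves simultaneously for the three forms and the relative-position matrices trivialize. This is exactly the mechanism already exploited in the proof of the odd-dimensional r.h.p. case earlier in the paper, and the present statement is its specialization to dimension $4$.
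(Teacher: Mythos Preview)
Your proof is correct and follows essentially the same approach as the paper: the paper observes that r.h.p. subspaces are the triple-orthogonality case $(\cos\theta^I,\cos\theta^J,\cos\theta^K)=(0,0,0)$, invokes point~(\ref{triple orthogonality}) of Proposition~\ref{orbit of orthogonal 4-dimensional subspaces}, and notes that any orthonormal basis is simultaneously standard for all three forms so that $C_{IJ}=C_{IK}=Id$. You spell out the same mechanism in more detail, but the route is identical.
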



\section{Isoclinic subspaces of dimension $2m>4$}

The purpose of this article is to determine the $Sp(n)$-orbit  of an isoclinic subspace $U \in \mathcal{IC}^{2m}$  in the real Grassmannian $G^\R(2m,4n)$.  After having studied the 2 and 4-dimensional cases, in the following we consider $m>2$. We prove that, similar to the result achieved in dimension 4,  to any $U \in \mathcal{IC}^{2m}, \; m>2$ we can associate  two sets of invariants: a triple   $(\xi,\chi,\eta)$ and the pair $(\Gamma,\Delta)$ which determine the canonical form of the matrices $C_{IJ},C_{IK}$. In fact, since $\Gamma$ is a function of $(\xi,\chi,\eta)$,  according to Theorem (\ref{main_theorem 1_ existance of canonical bases with same mutual position}) appeared in  \cite{Vacpreprint},  the vector  $\{\xi,\chi,\eta,\Delta\}$,  and the triple $(\theta^I,\theta^J,\theta^K)$ of the angles of isoclinicity w.r.t. a fixed admissible basis $(I,J,K)$ determine the $Sp(n)$-orbit of $U$.

\subsection{The triple of invariants  $(\xi,\chi,\eta)$} 
Let $U \in \mathcal{IC}^{2m}$  and  $(\theta^I,\theta^J,\theta^K)$ be the  angles of isoclinicity w.r.t. the admissible basis $(I,J,K)$.
   Let  $X_1 \in U$   be a generic unitary vector.  If $U$ is not orthogonal the triple ($X_2,Y_2,Z_2)$ is given in (\ref{$X_2,Y_2,Z_2$})
      otherwise it is obtainable  from  Claim (\ref{principal vectors asociated to $X_1$ in case some of the pairs (U,IU), (U,JU),(U,KU) are strictly orthogonal}).
Generalizing the result of  the Proposition (\ref{invariance of <X_2,Y_2>}), one has the following

\begin{prop} \label{invariance of xi,chi,eta of U in {IC}}
Let $U \in \mathcal{IC}^{2m}$. For any vector $X_1 \in U$ and fixed an admissible basis $(I,J,K)$ the triple $(\xi=<X_2,Y_2>,\chi=<X_2,Z_2>,\eta=<Y_2,Z_2>)$   is an invariant of $U$. Such triple is not  an  intrinsic property of $U$.
\end{prop}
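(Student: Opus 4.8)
The plan is to reduce $\xi,\chi,\eta$ to ratios of basis-independent traces, generalizing the four-dimensional argument of Proposition (\ref{invariance of <X_2,Y_2>}); the one genuinely new ingredient is an operator reformulation of isoclinicity valid in every dimension. First I would regard $\omega^I,\omega^J,\omega^K$ as skew-symmetric operators on $U$, namely $\omega^A=Pr_U\circ A|_U$, whose matrix in any orthonormal basis $\{X_i\}$ is $(<X_i,AX_j>)$. Since $A\in\{I,J,K\}$ is orthogonal and carries $U$ onto $AU$, the orthogonal projector onto $AU$ is $Pr^{AU}=A\,Pr_U\,A^{-1}$, so on $U$ one has $A^{-1}Pr^{AU}=Pr_U A^{-1}=-Pr_U A=-\omega^A$. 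This rewrites the vectors of (\ref{$X_2,Y_2,Z_2$}) as $\cos\theta^I X_2=-\omega^I X_1$, $\cos\theta^J Y_2=-\omega^J X_1$, $\cos\theta^K Z_2=-\omega^K X_1$, and, using $(\omega^A)^t=-\omega^A$, turns the three cosines into quadratic forms in $X_1$:
\[ \xi\cos\theta^I\cos\theta^J=<\omega^I X_1,\omega^J X_1>=-<X_1,\omega^I\omega^J X_1>, \]
with the analogous expressions for $\chi$ and $\eta$.

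The heart of the argument — and the step I expect to be the main obstacle — is to show that these quadratic forms are constant on the unit sphere of $U$. Here I would exploit that $U$ is isoclinic with respect to \emph{every} $A\in S(\mathcal{Q})$, which by condition 2 of Definition (\ref{definition of isoclinicity}) (applied with Gram matrix $\omega^A$) is equivalent to the single operator identity $(\omega^A)^2=-\cos^2\theta^A\,Id_U$ for all $A$. Substituting $A=\alpha_1 I+\alpha_2 J+\alpha_3 K$ (so $\omega^A=\alpha_1\omega^I+\alpha_2\omega^J+\alpha_3\omega^K$) and expanding, the pure squares $\alpha_i^2(\omega^\cdot)^2$ are already multiples of $Id_U$, so for every unit vector $(\alpha_1,\alpha_2,\alpha_3)$ the mixed part $\alpha_1\alpha_2\{\omega^I,\omega^J\}+\alpha_1\alpha_3\{\omega^I,\omega^K\}+\alpha_2\alpha_3\{\omega^J,\omega^K\}$ must itself be a scalar multiple of $Id_U$. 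Taking $\alpha_3=0$ (and cyclically) then forces each anticommutator to be scalar: $\{\omega^I,\omega^J\}=2s_{IJ}\,Id_U$, and likewise for $\{\omega^I,\omega^K\},\{\omega^J,\omega^K\}$. Consequently $<X_1,\omega^I\omega^J X_1>=s_{IJ}$ is independent of the unit vector $X_1$, and taking the trace gives $s_{IJ}=\frac{1}{2m}Tr(\omega^I\omega^J)$, whence
\[ \xi=-\frac{Tr(\omega^I\omega^J)}{2m\,\cos\theta^I\cos\theta^J},\qquad \chi=-\frac{Tr(\omega^I\omega^K)}{2m\,\cos\theta^I\cos\theta^K},\qquad \eta=-\frac{Tr(\omega^J\omega^K)}{2m\,\cos\theta^J\cos\theta^K}. \]
By the similarity invariance of the trace these depend only on $U$ and the fixed basis $(I,J,K)$, not on $X_1$, which proves invariance. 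The degenerate orthogonal cases, where a cosine vanishes and the triple is fixed instead by the conventions of Claim (\ref{principal vectors asociated to $X_1$ in case some of the pairs (U,IU), (U,JU),(U,KU) are strictly orthogonal}), are immediate since the prescribed values are constant.

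For the last assertion I would argue exactly as in Remark (\ref{dependence of $(xi, chi, eta)$ on the admissible basis}) that the triple is not intrinsic: under a change of admissible basis $(I,J,K)\mapsto(I,J,K)C$, $C\in SO(3)$, the three forms recombine linearly, so the traces $Tr(\omega^I\omega^J)$ and the angles $\theta^I,\theta^J,\theta^K$ transform non-trivially and $(\xi,\chi,\eta)$ is not preserved. A concrete witness in $\mathcal{IC}^{2m}$ is obtained by taking an orthogonal direct sum of copies of the two- and four-dimensional examples of that Remark, all with the same angles of isoclinicity, so that the triple of $U$ equals the common triple of the summands and inherits their dependence on the chosen rotation.
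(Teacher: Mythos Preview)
Your proof is correct and follows essentially the same route as the paper: expand the isoclinicity condition $(\omega^A)^2=-\cos^2\theta^A\,Id_U$ for a generic $A=\alpha_1 I+\alpha_2 J+\alpha_3 K$, isolate the cross terms by varying the $\alpha_i$, and identify them with $\xi\cos\theta^I\cos\theta^J$, etc., exactly as in the four-dimensional case. The only difference is packaging---you phrase the argument via operators and anticommutators, while the paper works with the first-row entries $a_i,a_i',\tilde a_i$ of the matrices of $\omega^I,\omega^J,\omega^K$; your formulation makes the scalar nature of $\{\omega^I,\omega^J\}$ explicit, whereas the paper effectively checks only its diagonal entries, but the content is the same.
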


\begin{proof}

In case $U$ is orthogonal we first consider the assumptions of the Claim (\ref{principal vectors asociated to $X_1$ in case some of the pairs (U,IU), (U,JU),(U,KU) are strictly orthogonal}).
Let $A= \alpha_1 I + \alpha_2 J +\alpha_3 K,  \; A   \in S(\mathcal{Q})$. Calculating as in the proof of the position  (\ref{isoclinicity w.r.t. one hypercomplex basis implies isoclinicity w.r.t. any compatible complex structure}) regarding the 4-dimensional case, and considering for example the first row of the matrices of $\omega^I,\omega^J,\omega^K$ w.r.t. some orthonormal basis  of $U$ whose 
entries we denote by $a_i,a'_i,\tilde a_i, \; i= 1, \ldots, 2m $ respectively, one has
\begin{equation} \label{angle of isoclinicity of a 2m  dimensional isoclinic subspace}
\begin{array}{lll}
\cos^2 \theta^A & = &\alpha_1^2 \cos^2 \theta^I + \alpha_2^2\cos^2 \theta^J + \alpha_3^2\cos^2 \theta^K + 2 \alpha_1 \alpha_2( \sum_{i=1}^{2m}a_i a_i')+
2 \alpha_1 \alpha_3(\sum_{i=1}^{2m}a_i \tilde a_i)+ 2 \alpha_2
\alpha_3( \sum_{i=1}^{2m}a'_i \tilde a_i).
\end{array}
\end{equation}
By the hypothesis of isoclinicity,  for any $A \in \mathcal{Q}$, the quantities
$\sum_{i=1}^{2m}a_i a_i', \; \sum_{i=1}^{2m}a_i \tilde a_i, \;\sum_{i=1}^{2m}a'_i \tilde a_i$ do not depend on $X_1$. 
As in the 4-dimensional case in fact we can assume that one among $\alpha_1,\alpha_2,\alpha_3$ is zero and the result follows.

 Considering in particular a triple of standard bases $\{X_i\}, \{Y_i\}, \{Z_i\}$ centered on a common vector  $X_1=Y_1=Z_1$  it is straightforward to verify that
\[
\begin{array}{ccc}
  \sum_{i=1}^{2m}a_i  a'_i & = & \cos \theta^I <X_1, JX_2>= \cos \theta^J <X_1,IY_2>= <X_2,Y_2> \cos \theta^I \cos \theta^J, \\
  \sum_{i=1}^{2m}a_i \tilde a_i & = &\cos \theta^I <X_1, KX_2>= \cos \theta^K <X_1, IZ_2>=<X_2,Z_2> \cos \theta^I \cos \theta^K, \\
  \sum_{i=1}^{2m}a'_i \tilde a_i & = & \cos \theta^J <X_1, KY_2>= \cos \theta^K  <X_1,JZ_2>=<Y_2,Z_2> \cos \theta^J \cos \theta^K.
\end{array}
\]

For instance  for the first equality
\[
\cos \theta^I<X_1,JX_2> = \cos \theta^I<Y_1,J(\sum_{i=1}^{2m} <X_2,Y_i>Y_i)>= \cos \theta^I \cos \theta^J <X_2,Y_2>
\]
and the stated invariance of the triple $(\xi,\chi,\eta)$ from the leading vector is proved.

As an example of the dependence of $(\xi, \chi, \eta)$ on the admissible basis  we can consider  a $2m$-dimensional  $I$-complex subspace. In \cite{articolo_2} we showed that it admits a decomposition into 4-dimensional $I$-complex  subspaces. Such addends are mutually Hermitian orthogonal i.e. they belong to orthogonal quaternionic  subspaces.  Then the conclusion follows from Remark (\ref{dependence of $(xi, chi, eta)$ on the admissible basis}).
\end{proof}
We will show afterwards in the Proposition (\ref{independence of canonical matrices from the leading vector}) that, like in the 4-dimensional case,   the content of the Proposition (\ref{invariance of xi,chi,eta of U in {IC}}) is part of a more general result.

\subsection{The  associated  subspaces of type $U^{IJ}, U^{IK}, U^{JK}$ of $U \in \mathcal{IC}^{2m}$} 

Let $U \in \mathcal{IC}^{2m}$ and  $(\theta^I,\theta^J, \theta^K)$ be the angles of isoclinicity w.r.t. the admissible basis $(I,J,K)$.
\begin{defi} \label{subspaces of type $U^{IJ}, U^{IK}, U^{IJ}$}
We say that a subspace $W \subset U \in \mathcal{IC}^{2m}$ is of type $U^{IJ}$ (resp. of type $U^{IK}$, resp.  of type $U^{JK}$) if the pair $(W,IW)$ and $(W,JW)$ (resp. $(W,IW)$ and $(W,KW)$, resp. $(W,JW)$ and $(W,KW)$) are isoclinic with angles $(\theta^I,\theta^J)$ (resp. $(\theta^I,\theta^K)$, resp. $(\theta^J,\theta^K)$).
\end{defi}

When dealing with the 4-dimensional case,  to any leading vector $X_1 \in U$ we associated a set of six chains.
According to the Proposition (\ref{Signa is defined only for non decompasable 2planes}), only if $U$ is not a 2-planes decomposable subspace we defined the vector $\Sigma(X_1)$  whose entries are exactly the six chains  given   in the  Definition (\ref{associated chains}) if none among $\xi,\chi,\eta$ is equal to $\pm 1$
  or in Definition   (\ref{The chains if at least one among the 3 invariants is one or minus one}) if only one among $\xi,\chi,\eta$ is equal to $\pm 1$ .
 In these cases in fact the  correspondence $X_1 \mapsto \Sigma(X_1)$ is one to one. Moreover we recall that in the second case, which happens in particular is $U$ has a single orthogonality, one has $\Gamma=1$ (see the Proposition (\ref{Gamma is an invariant of an isoclinic 4 dimensional subspace 1})).

Applying the same procedure described in  subsection (\ref{associated chains and matrices}), fixed a unitary vector $X_1 \in U$, we  build the   chains  given   in  the Definitions (\ref{associated chains}) and 
(\ref{The chains if at least one among the 3 invariants is one or minus one}) in case $U$ is not 2-planes decomposable and the ones defined  in  the Definition (\ref{The chains if all the 3 invariants is one or minus one})  otherwise.
The result of the Proposition (\ref{equality of thirs element $X_3=Y_3$ of the chains}) is clearly still valid 
i.e.
\[X_3=Y_3, \quad \tilde X_3= Z_3, \quad \tilde Y_3=\tilde Z_3\]
In particular if at least one among $\xi,\chi,\eta$ equals $\pm 1$, one has that  $X_3=Y_3=\tilde X_3= Z_3= \tilde Y_3=\tilde Z_3$ which implies that $(\Gamma,\Delta)=(1,0)$.

Denoting  by $\{X_i\}_\R, \{Y_i\}_\R,\{\tilde X_i\}_\R,\{Z_i\}_\R,\{\tilde Z_i\}_\R,\{\tilde Z_i\}_\R$ the linear span of the chains built on a common leading vector, one has that,
if $\dim U=4$,  $\{X_i\}_\R= \{Y_i\}_\R=\{\tilde X_i\}_\R=\{Z_i\}_\R=\{\tilde Z_i\}_\R=\{\tilde Z_i\}_\R=U$.

 This is not in general the case when  $2m > 4$. By construction however $\{X_i\}_\R= \{Y_i\}_\R$ and then,  according to the Definition (\ref{subspaces of type $U^{IJ}, U^{IK}, U^{IJ}$}), such subspace is of type $U^{IJ}$. We will  denote it by $U^{IJ}(X_1)$.
   Analogously by construction  $\{\tilde X_i\}_\R= \{\Z_i\}_\R$ is of type $U^{IK}$ and  $\{\tilde Y_i\}_\R= \{\tilde Z_i\}_\R$ is of type $U^{JK}$. We will denote them by  $U^{IK}(X_1)$
  and  $U^{JK}(X_1)$ respectively.

\begin{prop}  \label{orthogonal subspaces belong to mathcal{IC} 4 with angles theta I,theta J,theta K}
 If at least one among $(\xi,\chi,\eta)$  is equal to $\pm 1$, in particular if $U$ is orthogonal,   one has $U^{IJ}(X_1)=U^{IK}(X_1)=U^{JK}(X_1) \in \mathcal {IC}^4$ with same angles  of isoclinicity of $U$.
 Furthermore if  only one among $(\xi,\chi,\eta)$  is equal to $\pm 1$, in particular if $U$ has a single orthogonality, such subspace is unique. This is not the case if $U$ is 2-planes decomposable.
 \end{prop}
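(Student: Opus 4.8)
The plan is to reduce everything to the case $\xi=\pm 1$, the cases $\chi=\pm 1$ and $\eta=\pm 1$ following by relabelling $(I,J,K)$. Since $\xi=<X_2,Y_2>$ with $X_2,Y_2$ unit vectors, $\xi=\pm 1$ forces $Y_2=\xi X_2$, hence $U_1^I=L(X_1,X_2)=L(X_1,Y_2)=U_1^J$; moreover $\eta=<Y_2,Z_2>=\xi<X_2,Z_2>=\xi\chi$ directly, an identity I reuse below. With $\xi=\pm 1$ the chains collapse as prescribed in Definition (\ref{The chains if at least one among the 3 invariants is one or minus one}): $\{X_i\}=\{\tilde X_i\}$, $\{Y_i\}=\{\tilde Y_i\}$, $\{Z_i\}=\{\tilde Z_i\}$, with all the third vectors equal. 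As $\{Y_i\}=(X_1,\xi X_2,X_3,\xi X_4)$ spans the same real subspace as $\{X_i\}=(X_1,X_2,X_3,X_4)$, I obtain at once $U^{IJ}(X_1)=\{X_i\}_\R=\{Y_i\}_\R$, $U^{IK}(X_1)=\{\tilde X_i\}_\R=\{X_i\}_\R$ and $U^{JK}(X_1)=\{\tilde Y_i\}_\R=\{Y_i\}_\R$, all equal to $W:=L(X_1,X_2,X_3,X_4)$. That the $\omega^K$-chain spans the same $W$ follows by solving $X_4=\tilde X_4=(Z_2-\chi X_2)/\sqrt{1-\chi^2}$ for $Z_2=\chi X_2+\sqrt{1-\chi^2}\,X_4\in W$ (and likewise $Z_4\in W$) when $\chi\neq\pm 1$, and directly from Definition (\ref{The chains if all the 3 invariants is one or minus one}) in the decomposable subcase $\chi=\pm 1$. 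A short orthogonality check ($X_3\in(U_1^I)^\perp$ and $<X_4,X_1>=<X_4,X_2>=0$ from the formula for $X_4$) shows $(X_1,X_2,X_3,X_4)$ is orthonormal, so $\dim W=4$.

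Next I would show $W\in\mathcal{IC}^4$ with the angles of $U$. Because $\{X_i\}$ is an $\omega^I$-standard basis of the isoclinic form $\omega^I|_U$, every non-zero entry of the $4\times 4$ Gram matrix $G=(<X_i,IX_j>)$ of the projector $Pr^{IW}:W\to IW$ equals $\cos\theta^I$, whence $G\,G^t=\cos^2\theta^I\,Id$; by condition 2) of Definition (\ref{definition of isoclinicity}) this says $(W,IW)$ is isoclinic of angle $\theta^I$. The identical computation with the $\omega^J$-standard basis $\{Y_i\}$ and the $\omega^K$-standard basis $\{Z_i\}$ gives $(W,JW)$ and $(W,KW)$ isoclinic of angles $\theta^J$ and $\theta^K$. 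Proposition (\ref{isoclinicity w.r.t. one hypercomplex basis implies isoclinicity w.r.t. any compatible complex structure}) then upgrades this to isoclinicity of $(W,AW)$ for every $A\in S(\mathcal{Q})$, so $W\in\mathcal{IC}^4$ and its angles of isoclinicity are exactly $(\theta^I,\theta^J,\theta^K)$, those of $U$.

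For the uniqueness dichotomy I exploit $\eta=\xi\chi$: when $\xi=\pm 1$ one has $\eta=\pm 1$ if and only if $\chi=\pm 1$, so that ``exactly one of $\xi,\chi,\eta$ equals $\pm 1$'' is equivalent to $\chi,\eta\in(-1,1)$. In that case the denominator $\sqrt{1-\chi^2}$ is non-zero, so the vectors $X_4=\tilde X_4=(Z_2-\chi X_2)/\sqrt{1-\chi^2}$ and $X_3=-I^{-1}Pr^{IU}X_4/\cos\theta^I$, and hence $W$, are determined by $X_1$ once the sign is fixed by the orientation convention; this gives uniqueness, and it covers single orthogonality, where one cosine among $\cos\theta^I,\cos\theta^J,\cos\theta^K$ vanishes and, by Claim (\ref{principal vectors asociated to $X_1$ in case some of the pairs (U,IU), (U,JU),(U,KU) are strictly orthogonal}), exactly one of $\xi,\chi,\eta$ equals $1$. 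If instead $U$ is $2$-planes decomposable then $\xi=\chi=\eta=\pm 1$, all three standard $2$-planes through $X_1$ coincide, every fourth-vector formula degenerates to $0/0$, and exactly as in Proposition (\ref{Signa is defined only for non decompasable 2planes}) the vector $X_3$ may be taken to be an arbitrary unit vector of $(U_1^I)^\perp\cap U$; distinct choices give distinct $W$, so uniqueness fails. The step I expect to be most delicate is the consistent bookkeeping of the degenerate chain formulas across these subcases --- specifically, confirming that the $\omega^K$-chain really lands inside $W$ and that the orientation conventions resolve the sign of $X_3$ so that the construction closes up.
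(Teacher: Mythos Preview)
Your argument is correct and follows essentially the same route as the paper's proof, just with considerably more detail filled in. The paper's proof is a terse three-liner: from Definitions~(\ref{The chains if at least one among the 3 invariants is one or minus one}) and~(\ref{The chains if all the 3 invariants is one or minus one}) one has $\{X_i\}=\{\tilde X_i\}$ (which you unpack explicitly, including the verification that $Z_2\in W$ and that $W\in\mathcal{IC}^4$ via Proposition~(\ref{isoclinicity w.r.t. one hypercomplex basis implies isoclinicity w.r.t. any compatible complex structure})); uniqueness in the non--2-planes-decomposable case is phrased in the paper simply as ``unicity of $\Sigma(X_1)$'', which your argument makes concrete by observing that $\chi\neq\pm 1$ forces $X_4=(Z_2-\chi X_2)/\sqrt{1-\chi^2}$ to be well-defined; and non-uniqueness in the decomposable case is handled identically by picking two different $X_3\in(U_1^I)^\perp\cap U$. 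One small imprecision: the nonzero entries of $G=(\langle X_i,IX_j\rangle)$ are $\pm\cos\theta^I$, not all $+\cos\theta^I$, but of course $GG^t=\cos^2\theta^I\,Id$ holds either way.
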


\begin{proof}
If at least one among $(\xi,\chi,\eta)$  is equal to $\pm 1$, from the Definition (\ref{The chains if at least one among the 3 invariants is one or minus one}) and (\ref{The chains if all the 3 invariants is one or minus one}),  one has  $\{X_i\}= \{\tilde X_i\}$  and the first statement follows. In case  $U$  is not 2-planes decomposable (in particular if $U$ has a single orthogonality)   the unicity follows from the unicity of $\Sigma(X_1)$.
 Otherwise (in particular if $U$ has a double or triple orthogonality) let $X_3, \bar X_3 \in (U_1^I)^\perp \cap U$ and denote by $U_2^I= L(X_3,X_4), \; \bar U_2^I= L(\bar X_3,\bar X_4)$ the $\omega^I, \omega^J,\omega^K$-standard 2-planes containing $X_3$ and $\bar X_3$ respectively.  One has that $U_1^I \oplus U_2^I$ and $U_1^I \oplus \bar U_2^I$ are a pair  of  different 4 dimensional isoclinic subspace unless $\bar X_3 \in L(X_3,X_4)$.
 \end{proof}

\begin{defi}
Let $U \in \mathcal{IC}^{2m}$. 
We call \textbf{associated subspaces} of $U$ of type $U^{IJ}$ and $U^{IK}$  (resp. of type $U^{IJ}$ and $U^{JK}$, resp. $U^{JK}$ and $U^{IK}$) a pair  of 4-dimensional subspaces of type $U^{IJ}$ and $U^{IK}$ (resp. of type $U^{IJ}$ and $U^{JK}$, resp. of type $U^{JK}$ and $U^{IK}$) generated by the $\omega^I$ and $\omega^J$ (resp. $\omega^I$ and $\omega^K$, resp. $\omega^J$ and $\omega^K$) chains centered on a common vector.
\end{defi}

Clearly if one among $\xi,\chi,\eta$ equals $\pm 1$  and $X_1  \in U$, any pair among $U^{IJ}(X_1), U^{IK}(X_1),U^{JK}(X_1)$ are associated subspaces.
In the following we will need only associated subspaces of type $U^{IJ}$ and $U^{IK}$. Anyway all properties given in the next proposition  for subspaces of one kind (for example $U^{IJ}$) or for a pair of subspaces of different kind (for instance   $U^{IJ }$ and $U^{IK}$) can be extended to all 3 kinds or to all three pairs.


\begin{prop} \label{properties of subspaces of tupe $U^{IJ}$, $U^{IK}$, $U^{JK}$}
 Let $U \in \mathcal{IC}^{2m}$. 
\begin{enumerate}
\item  \label{invariance of $U^{IJ}(X_1)$ from the leading vector X di $U^{IJ}(X_1)$}
 Let $U \in \mathcal{IC}^{2m}$ be not 2-planes decomposable.
Let $X \in U$ and consider   $U^{IJ}(X)$.  
 Then for any $X_1 \in U^{IJ}(X)$ 
 one has $U^{IJ}(X)=U^{IJ}(X_1)$. 

\begin{proof}
 If none among $\xi,\chi,\eta$ equals $\pm 1$, 
 for any $X_1 \in U^{IJ}(X)$
 the standard 2-planes $L(X_1,X_2=\frac{I^{-1}P^{IU}X_1}{\cos \theta^I})$ and $L(X_1,Y_2=\frac{J^{-1}P^{JU}X_1}{\cos \theta^J})$
  are both in $U^{IJ}(X)$.  Then also the pair $(X_4,Y_4)$   belonging to $L(X_2,Y_2)$   is in $U^{IJ}(X)$    and consequently $X_3=Y_3$    is in $U^{IJ}(X)$.
    Mutatis mutandis, the same proof can be used for $U^{IK}(X)$ or $U^{JK}(X)$.
    In case only one   among $\xi,\chi,\eta$ equals $\pm 1$, 
    suppose without lack of generality that 
    $\chi= \pm 1$ and consider the uniquely determined $U^{IJ}(X_1)$. Applying the proof above, one has that  $U^{IJ}(X_1)=U^{IJ}(X)$. By the Proposition (\ref{orthogonal subspaces belong to mathcal{IC} 4 with angles theta I,theta J,theta K}) one has $U^{IJ}(X_1)=U^{IK}(X_1)=U^{JK}(X_1)$ and we can extend such property to $U^{IK}(X)$ and $U^{JK}(X)$.
\end{proof}

\item  \label{orthogonal complement of a type $U^{IJ}$ subspace is of type $U^{IJ}$} 

The orthogonal sum of subspaces of $U$ of  type $U^{IJ}$ 
  is a subspace of type  $U^{IJ}$.
Furthermore, let $W_1 \subset U^{2m}$ be  a  subspaces  of type $U^{IJ}$.  The subspaces   $W_1^\perp \cap U$   is of  type $U^{IJ}$.

\begin{proof}
 Straightforward.
\end{proof}

\item
  Let $U \in \mathcal{IC}^{2m}$ be not 2-planes decomposable. 
  If a pair of 4-dimensional subspaces $U_1,U_2$ of $U$  of type  $U^{IJ}$ 
has a non trivial intersection then $U_1=U_2$.
As a consequence for any $ U \ni \bar X \notin U_1$  one has  $U_1 \cap U^{IJ}(\bar X)= \{ 0\}$.
 In particular if  $\bar X \in U_1^\perp$ all vectors of the $\omega^I$ and $\omega^J$ chains centered on $\bar X$ are in $U_1^\perp$.
\begin{proof}
The first statement is straightforward from  point (\ref{invariance of $U^{IJ}(X_1)$ from the leading vector X di $U^{IJ}(X_1)$}). For the second,
let $\bar X_1 \in U_1^\perp$. Without lack of generality we can suppose that both $\cos \theta^I$ and $\cos \theta^J$ are not zero. Then $\bar X_2 =\frac{I^{-1}P^{IU}\bar X}{\cos \theta^I}  \in U_1^\perp$ since, from previous point,  $U_1^\perp $ is of type $U^{IJ}$  and for the same reason also $\bar Y_2=\frac{J^{-1}P^{JU}\bar X}{\cos \theta^J}  \in U_1^\perp$. Then the 2-plane $L(\bar X_2,\bar Y_2) \subset U_1^\perp$ which implies that the pair $(\bar X_4, \bar Y_4)$ is in  $U_1^\perp$ as well as  $\bar X_3= \bar Y_3$. Then $L(\bar X_1,\bar X_2) \stackrel{\perp} {\oplus} L(\bar X_3,\bar X_4)\subset  U_1^\perp$.
\end{proof}

\item
Let $U^{IJ}(X_1)=L(X_1,X_2,X_3,X_4)$ and $U^{IK}(X_1)=L(X_1,X_2,\tilde X_3,\tilde X_4)$   be a pair  of associated  subspaces. Then either  $U^{IJ}(X_1) \cap U^{IK}(X_1)= L(X_1,X_2)$ or $U^{IJ}(X_1)= U^{IK}(X_1)$. 
In the last case   $U^{IJ}(X_1)=U^{IK}(X_1) \in \mathcal{IC}^4$ (with same angles of isoclinicity of $U$).
\begin{proof}
In case  at least one  among $\xi,\chi,\eta$ equals $\pm 1$, 
from the Proposition (\ref{orthogonal subspaces belong to mathcal{IC} 4 with angles theta I,theta J,theta K}), one has that $U^{IJ}(X_1)=U^{IK}(X_1)=U^{JK}(X_1)$.
Otherwise 
the 4-dimensional subspaces $U^{IJ}(X_1)$ and $U^{IK}(X_1)$  intersect by construction in $L(X_1,X_2)$ since $L(X_1,X_2)$ is a standard 2-plane of $\omega^I$ in both of them.
Suppose $S \in U^{IJ}(X_1) \cap U^{IK}(X_1)$ with $S \notin L(X_1,X_2)$. Then $S= S_1 + S_2 $ with $S_1 \in   L(X_1,X_2)$ and $S_2 \in L(X_1,X_2)^\perp=L(X_3,X_4)$ as a vector of $U^{IJ}(X_1)$ and
$S= T_1 + T_2 $ with $T_1 \in   L(X_1,X_2)$ and $T_2 \in L(X_1,X_2)^\perp= L(\tilde X_3, \tilde X_4)$ as a vector of $U^{IK}(X_1)$.  Being  $S_1=T_1$, by the uniqueness of the orthogonal  decomposition,  it follows  $S_2=T_2$. Then $L(\tilde X_3, \tilde X_4)\cap L(X_3,X_4) \neq \{0\}$. which implies that  $L(\tilde X_3, \tilde X_4)= L(X_3,X_4)$ i.e. $U^{IJ}(X_1)= U^{IK}(X_1)$.
\end{proof}

\item \label{1:1 correspondence between standard subspaces of $U^{IJ}(X_1)$ and associated subspaces of type $U^{IK}$}
  Let $X_1 \in U$ unitary and $U^{IJ}(X_1)$ and $U^{IK}(X_1)$ be a pair of associated 4-dimensional subspaces.
   Then either $U^{IJ}(X_1)=U^{IK}(X_1)$ i.e. $U^{IJ}(X_1) \in \mathcal{IC}^{4}$ with same angles of isoclinicity of $U$ or there exists a $1 : 1$ correspondence between standard 2-planes of $\omega^I|_{U^{IJ}(X_1)}$ and associated subspaces of type $U^{IK}$.
\begin{proof}
From previous point  either  $U^{IJ}(X_1) \cap U^{IK}(X_1)= L(X_1,X_2)$ or $U^{IJ}(X_1)= U^{IK}(X_1)$ in which case $U^{IJ}(X_1) \in \mathcal{IC}^4$ (with same angles of isoclinicity of $U$).
Suppose  $U^{IJ}(X_1) \notin \mathcal{IC}^4$. Let  $\bar X_1 \in U^{IJ}(X_1), \; \bar X_1 \notin L(X_1,X_2)$.   Clearly $U^{IK}(\bar X_1) \neq U^{IJ}(\bar X_1)=U^{IJ}(X_1)$ by the assumption that  $U^{IJ}(X_1) \notin \mathcal{IC}^4$. We prove that $U^{IK}(X_1) \neq U^{IK}( \bar X_1)$. In fact if $U^{IK}(X_1) = U^{IK}( \bar X_1)$ one has that $ \bar X_1 \in U^{IK}(X_1)$ and consequently  by previous point $U^{IK}(X_1)=U^{IJ}(X_1)$ contradiction.

\end{proof}

\end{enumerate}
\end{prop}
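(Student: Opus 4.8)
The plan is to reduce the whole statement to a single reconstruction principle: a $4$-dimensional subspace $W\subset U$ of type $U^{IJ}$ coincides with $U^{IJ}(X_1)$ for every nonzero $X_1\in W$. Granting this, the first assertion is immediate, since if $U_1,U_2$ are $4$-dimensional of type $U^{IJ}$ and $0\neq X_1\in U_1\cap U_2$, then $U_1=U^{IJ}(X_1)=U_2$. This is exactly the route indicated by point (\ref{invariance of $U^{IJ}(X_1)$ from the leading vector X di $U^{IJ}(X_1)$}), which already asserts $U^{IJ}(X)=U^{IJ}(X_1)$ whenever $X_1\in U^{IJ}(X)$; the only gap to fill is that an abstract $4$-dimensional type-$U^{IJ}$ subspace is itself of the form $U^{IJ}(X)$.

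First I would establish the reconstruction principle by showing that the chain construction centered at any unitary $X_1\in W$ never leaves $W$. The key observation is that both $U$ and $W$ are isoclinic with respect to $I$ with the \emph{same} angle $\theta^I$, and $IW\subseteq IU$; hence $|Pr^{IU}X_1|=\cos\theta^I=|Pr^{IW}X_1|$, and since $Pr^{IW}X_1$ is precisely the $IW$-component of $Pr^{IU}X_1$, equality of norms forces $Pr^{IU}X_1\in IW$. Thus $X_2=\frac{I^{-1}Pr^{IU}X_1}{\cos\theta^I}\in W$, and the same argument with $J$ gives $Y_2\in W$. Then $X_4,Y_4\in L(X_2,Y_2)\subseteq W$ by the formulas in (\ref{oriented bases}), and applying the projection identity once more to $X_4$ yields $X_3=Y_3\in W$. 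Hence $U^{IJ}(X_1)=L(X_1,X_2,X_3,X_4)\subseteq W$, and equality follows by comparing dimensions. Because $U$ is not $2$-planes decomposable, $\Sigma(X_1)$, and therefore $U^{IJ}(X_1)$, is uniquely determined, so no ambiguity of choice arises.

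With the first assertion in hand, the consequence is formal: if $\bar X\notin U_1$ yet $U_1\cap U^{IJ}(\bar X)\neq\{0\}$, then the first assertion applied to the two $4$-dimensional type-$U^{IJ}$ subspaces $U_1$ and $U^{IJ}(\bar X)$ forces $U_1=U^{IJ}(\bar X)\ni\bar X$, a contradiction; hence $U_1\cap U^{IJ}(\bar X)=\{0\}$. For the final ``in particular'' statement I would invoke point (\ref{orthogonal complement of a type $U^{IJ}$ subspace is of type $U^{IJ}$}), by which $U_1^\perp\cap U$ is again of type $U^{IJ}$. Since $\bar X\in U_1^\perp\cap U$, the reconstruction argument of the previous paragraph, now carried out inside the type-$U^{IJ}$ subspace $U_1^\perp\cap U$, shows successively that $\bar X_2,\bar Y_2$, then $\bar X_4,\bar Y_4$, and finally $\bar X_3=\bar Y_3$ all lie in $U_1^\perp\cap U$; that is, the entire $\omega^I$ and $\omega^J$ chains centered on $\bar X$ stay in $U_1^\perp$.

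The main obstacle I anticipate is the reconstruction principle's reliance on the equality $Pr^{IU}X_1=Pr^{IW}X_1$, which is exactly where the isoclinicity of the ambient $U$ (not merely of $W$) is indispensable; without it the chain centered at $X_1$ could pick up components outside $W$ and the whole argument would collapse. A secondary point requiring care is the degenerate case in which one of $\cos\theta^I,\cos\theta^J$ vanishes (single orthogonality), where the relevant projection is trivial and $X_2$ or $Y_2$ is no longer defined by (\ref{$X_2,Y_2,Z_2$}); there I would instead appeal to Proposition (\ref{orthogonal subspaces belong to mathcal{IC} 4 with angles theta I,theta J,theta K}), which identifies $U^{IJ}(X_1)$ with the unique element of $\mathcal{IC}^4$ through $X_1$ and thereby restores the reconstruction principle in that case as well.
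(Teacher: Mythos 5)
Your proposal has a genuine gap of coverage: it only addresses points 1)--3) of the five-point proposition. Points 4) and 5) --- the dichotomy $U^{IJ}(X_1)\cap U^{IK}(X_1)=L(X_1,X_2)$ or $U^{IJ}(X_1)=U^{IK}(X_1)\in\mathcal{IC}^4$, and the resulting $1:1$ correspondence between $\omega^I$-standard $2$-planes of $U^{IJ}(X_1)$ and associated subspaces of type $U^{IK}$ --- are never mentioned, and they do not follow from your reconstruction principle, which concerns a single type: $U^{IJ}(X_1)$ and $U^{IK}(X_1)$ are of \emph{different} types, so an ``equal or trivially intersecting'' statement for type $U^{IJ}$ alone says nothing about how the two families meet. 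The paper proves 4) by splitting a putative common vector $S\notin L(X_1,X_2)$ orthogonally against the shared $\omega^I$-standard $2$-plane $L(X_1,X_2)$ inside each subspace, forcing $L(X_3,X_4)\cap L(\tilde X_3,\tilde X_4)\neq\{0\}$ and hence equality of these standard $2$-planes, and then derives 5) by a short contradiction argument from 4). You would need these arguments, or substitutes, to complete the proof.

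Where your proposal does overlap with the paper (points 1) and 3)), the skeleton is the paper's, and your projection-norm argument --- $IW\subseteq IU$ together with $|Pr^{IW}X_1|=\cos\theta^I=|Pr^{IU}X_1|$ forces $Pr^{IU}X_1\in IW$ --- is a genuine improvement: it supplies the justification the paper leaves implicit for why the standard $2$-planes through $X_1\in U^{IJ}(X)$ stay inside $U^{IJ}(X)$, and you correctly isolate that point 3) needs every abstract $4$-dimensional type-$U^{IJ}$ subspace to be chain-generated. However, your treatment of the degenerate cases is off in one essential respect: the chain construction degenerates not only when $\cos\theta^I$ or $\cos\theta^J$ vanishes, but whenever $\xi=\pm1$ (the formula $X_4=(Y_2-\xi X_2)/\sqrt{1-\xi^2}$ divides by zero), which can occur with all three cosines nonzero; and in that regime the abstract reconstruction principle is \emph{false}. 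Indeed, since $\xi$ is an invariant, $\xi=\pm1$ makes every $\omega^I$-standard $2$-plane simultaneously $\omega^J$-standard, so the orthogonal sum of $L(X_1,X_2)$ with \emph{any} standard $2$-plane orthogonal to it is a $4$-dimensional subspace of type $U^{IJ}$; when $\dim U>4$ such subspaces are non-unique through $X_1$ and intersect pairwise in $L(X_1,X_2)$ without coinciding. Your fallback --- identifying $W$ with ``the unique element of $\mathcal{IC}^4$ through $X_1$'' via the proposition on the case where some invariant equals $\pm1$ --- does not apply here: an abstract type-$U^{IJ}$ subspace carries no isoclinicity constraint with respect to $K$ and need not lie in $\mathcal{IC}^4$ at all. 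The paper sidesteps exactly this by redefining the chains when an invariant equals $\pm 1$ and by asserting points 1) and 3) only for the chain-generated subspaces $U^{IJ}(X)$, routing the case of exactly one invariant equal to $\pm1$ through that proposition; your stronger, ``abstract'' formulation of the principle must therefore be restricted to the case where none of $\xi,\chi,\eta$ equals $\pm1$.
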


Let $X_1 \in U$  unitary  and $U^{IJ}(X_1)$ and $U^{IK}(X_1)$ be the associated subspaces.
It is straightforward to verify that the  Proposition (\ref{orthogonal maps between chains belong to Sp(n)})
and the Proposition (\ref{invariance of $C_{IJ}$ and $C_{IK}$ w.r.t. any leading vector}) can be applied also to the 4-dimensional subspaces of type $U^{IJ},U^{IK},U^{JK}$ leading to the
invariance of the  Gram matrices $g_{IJ}=<X_i,Y_j>$ and $g_{IK} <X_i,Z_j>$ w.r.t. the chains $\{X_i\}, \{Y_i\},\{Z_i\}$ centered on $X_1$. The form of such matrices
 is still given in
   (\ref{canonical matrices $C_{IJ}$ $C_{IK}$ and of 4-dimensional isoclinic subspace w.r.t. the associated chains}).
In this case the subspaces $L(X_3,X_4)$ and $L(\tilde X_3,\tilde X_4)$ are in general different as well as the pairs of associated  subspaces $U^{IJ}(X_1)$ and $U^{IK}(X_1)$. Therefore
 $g_{IK}$ is not in general an orthogonal matrices (differently from $g_{IJ}$).   
 Recalling that from the Proposition (\ref{Gamma is an invariant of an isoclinic 4 dimensional subspace 1})  one has that  $\Gamma= <X_3,\tilde X_3>$ is   given in (\ref{cos phi}) in case none  among $\xi,\chi,\eta$ equals  $\pm 1$   and $\Gamma=1$  otherwise we state the following
\begin{prop} \label{values of gamma and delta}
The Gram matrix  $(L(X_3,X_4) \times L(\tilde X_3,\tilde X_4))$ is given by
\[
 \left(   \begin{array}{cc}
 <X_3,\tilde X_3> & <X_3,\tilde X_4>\\
<X_4,\tilde X_3> & <X_4,\tilde X_4>
\end{array}      \right) =
 \left(   \begin{array}{cc}
 \Gamma & -\Delta\\
\Delta  & \Gamma \end{array}      \right).
\]
  Therefore the 2-planes $L(X_3,X_4)$ and $L(\tilde X_3,\tilde X_4)$ are  isoclinic with the cosine of the angle of isoclinicity $\theta$ given by $\cos \theta= \sqrt{\Gamma^2+\Delta^2}$.
\end{prop}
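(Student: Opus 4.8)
The plan is to produce a single orthogonal complex structure on $U$ that simultaneously generates the third vectors of both chains, and then to read the Gram matrix off from its orthogonality together with its skew\nobreakdash-symmetry. Assume first that $\cos\theta^I\neq 0$ and introduce the endomorphism
\[
\Phi:=-\frac{1}{\cos\theta^I}\,I^{-1}\,Pr^{IU}\big|_U\colon U\longrightarrow U .
\]
Since $U$ is isoclinic, the identity $Pr^{U}Pr^{IU}\big|_U=\cos^2\theta^I\,\mathrm{Id}$ holds on $U$, so that for $X,Y\in U$
\[
\langle \Phi X,\Phi Y\rangle=\frac{1}{\cos^2\theta^I}\langle Pr^{IU}X,Pr^{IU}Y\rangle=\frac{1}{\cos^2\theta^I}\langle X,\,Pr^{U}Pr^{IU}Y\rangle=\langle X,Y\rangle
\]
(using that $Pr^{IU}$ is self-adjoint and idempotent and that $X\in U$), whence $\Phi$ is orthogonal. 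Referring $\omega^I|_U$ to a standard basis, where by isoclinicity every diagonal $2\times2$ block equals $\cos\theta^I\left(\begin{smallmatrix}0&1\\-1&0\end{smallmatrix}\right)$, one checks $\Phi^2=-\mathrm{Id}$, so $\Phi$ is an orthogonal complex structure on $U$ and in particular skew\nobreakdash-symmetric. The $\omega^I$-standard $2$-planes are precisely the $\Phi$-invariant ones, and comparing with Definition (\ref{associated chains}) the chains give directly $X_3=\Phi X_4$ and $\tilde X_3=\Phi\tilde X_4$.

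The four entries then follow immediately. Orthogonality of $\Phi$ yields
\[
\langle X_3,\tilde X_3\rangle=\langle \Phi X_4,\Phi\tilde X_4\rangle=\langle X_4,\tilde X_4\rangle=:\Gamma,
\]
and skew\nobreakdash-symmetry of $\Phi$ yields
\[
\langle X_3,\tilde X_4\rangle=\langle \Phi X_4,\tilde X_4\rangle=-\langle X_4,\Phi\tilde X_4\rangle=-\langle X_4,\tilde X_3\rangle .
\]
Setting $\Delta:=\langle X_4,\tilde X_3\rangle$ gives the off\nobreakdash-diagonal entries $\mp\Delta$, so the Gram matrix has the asserted form $\left(\begin{smallmatrix}\Gamma&-\Delta\\\Delta&\Gamma\end{smallmatrix}\right)$; this is consistent with the value of $\Gamma=\langle X_3,\tilde X_3\rangle$ already fixed in (\ref{cos phi}).

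For the isoclinicity statement I would apply condition 2) of Definition (\ref{definition of isoclinicity}) to the pair of $2$-planes $L(X_3,X_4)$ and $L(\tilde X_3,\tilde X_4)$: with $G=\left(\begin{smallmatrix}\Gamma&-\Delta\\\Delta&\Gamma\end{smallmatrix}\right)$ the Gram matrix of the orthogonal projector between them, a direct computation gives $G\,G^{t}=(\Gamma^2+\Delta^2)\,\mathrm{Id}$, so the two planes are isoclinic with $\cos\theta=\sqrt{\Gamma^2+\Delta^2}$. Finally I would dispose of the degenerate case $\cos\theta^I=0$, equivalently whenever one of $\xi,\chi,\eta$ equals $\pm1$: there the chains satisfy $X_3=\tilde X_3$ and $X_4=\tilde X_4$, so $G=\mathrm{Id}$, $(\Gamma,\Delta)=(1,0)$, and the conclusion is trivial.

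The one genuinely non\nobreakdash-formal step is establishing that $\Phi$ is a complex structure, not merely a skew endomorphism; this is exactly where the isoclinicity of $U$ is indispensable, since it forces the uniform standard form of $\omega^I|_U$ and hence $\Phi^2=-\mathrm{Id}$. It is precisely this hypothesis that upgrades an a priori arbitrary $2\times2$ Gram matrix to one of conformal (similarity\nobreakdash-to\nobreakdash-rotation) type, and thereby guarantees the isoclinicity of the pair $\big(L(X_3,X_4),L(\tilde X_3,\tilde X_4)\big)$.
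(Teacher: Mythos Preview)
Your argument is correct. Introducing the endomorphism $\Phi=-\tfrac{1}{\cos\theta^I}I^{-1}Pr^{IU}|_U$ and recognising it as an orthogonal complex structure on $U$ is a clean way to package the computation: orthogonality gives the equality of the diagonal entries, skew-symmetry the antisymmetry of the off-diagonal ones, and the isoclinicity conclusion follows immediately from $GG^t=(\Gamma^2+\Delta^2)\mathrm{Id}$.

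The paper proceeds more directly and does not name $\Phi$. For the off-diagonal relation it writes out the chain
\[
\langle X_3,\tilde X_4\rangle=\langle IX_3,I\tilde X_4\rangle=-\tfrac{1}{\cos\theta^I}\langle P^{IU}X_4,I\tilde X_4\rangle=\cdots=-\langle X_4,\tilde X_3\rangle,
\]
which is exactly your skew-symmetry of $\Phi$ unpacked by hand. For the diagonal entry it does not appeal to orthogonality of $\Phi$ but instead substitutes the explicit expressions $X_4=\tfrac{Y_2-\xi X_2}{\sqrt{1-\xi^2}}$, $\tilde X_4=\tfrac{Z_2-\chi X_2}{\sqrt{1-\chi^2}}$ and computes $\langle X_4,\tilde X_4\rangle=\tfrac{\eta-\xi\chi}{\sqrt{1-\xi^2}\sqrt{1-\chi^2}}$, matching the formula~(\ref{cos phi}) already obtained for $\Gamma=\langle X_3,\tilde X_3\rangle$. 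Your route avoids this coordinate computation entirely; the paper's route has the small bonus of re-deriving the closed form for $\Gamma$ as a consistency check.

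One minor wording slip: ``$\cos\theta^I=0$, equivalently whenever one of $\xi,\chi,\eta$ equals $\pm1$'' is not an equivalence (the latter can occur with $\cos\theta^I\neq0$). It does no harm, since your $\Phi$-argument actually covers every case with $\cos\theta^I\neq0$ regardless of $\xi,\chi,\eta$, and in all the remaining situations the chains are set up so that $X_3=\tilde X_3$, $X_4=\tilde X_4$, giving $G=\mathrm{Id}$ trivially.
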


\begin{proof}

 We prove that $<X_3,\tilde X_4>=-<X_4,\tilde X_3>$. In fact
\[
\begin{array}{lll}
<X_3,\tilde X_4>&=&<IX_3,I\tilde X_4>=- <\frac{P^{IU} X_4}{\cos \theta^I},I \tilde X_4>=  -\frac{1}{\cos \theta^I}<X_4, P^{IU} I \tilde X_4>=\\
       &=& -\frac{1}{\cos \theta^I}<X_4, I\tilde X_4>=   \frac{1}{\cos \theta^I}<IX_4, \tilde X_4>=\frac{1}{\cos \theta^I}<P^{IU} IX_4, \tilde X_4>=\frac{1}{\cos \theta^I}<IX_4, P^{IU}\tilde X_4>=\\
       &=& \frac{1}{\cos \theta^I}<X_4, I^{-1}P^{IU}\tilde X_4>=\frac{1}{\cos \theta^I}<X_4, -\tilde X_3 \cos \theta^I>=-<X_4,\tilde X_3>
\end{array}
\]

Let now compute
\[
<X_4,\tilde X_4>=  <\frac{Y_2- \xi X_2}{\sqrt{1-\xi^2}},\frac{Z_2- \chi X_2}{\sqrt{1-\chi^2}}>= \frac{<Y_2- \xi X_2,Z_2- \chi X_2>}{\sqrt{1-\xi^2}\sqrt{1-\chi^2}}=
 \frac{\eta- \chi \xi -\xi \chi + \xi \chi}{\sqrt{1-\xi^2}\sqrt{1-\chi^2}}= \frac{\eta- \chi \xi} {\sqrt{1-\xi^2} \sqrt{1-\chi^2}}
\]
\end{proof}

It is easy to check that $\Delta=<X_4,\tilde X_3>$ 
is given by the following equivalent expressions where $\epsilon= -\frac{1}{\cos \theta^I \sqrt{1-\xi^2} \sqrt{1-\chi^2}}$:     
\begin{equation} \label{equivalent exprssions of Delta}
\begin{array}{lll}
\Delta=<X_4,\tilde X_3> &=& \frac{1}{\cos \theta^I}<X_4,I\tilde X_4>=
 -\frac{1}{\cos \theta^I}<X_3,I\tilde X_3>=
 -\epsilon <Y_2,I\tilde X_4>=
  \frac{\epsilon}{\cos \theta^J  \cos \theta^K} <P^{JU} X_1,P^{KU} X_1>\\
&=&  \frac{<Z_2,X_3>}{\sqrt{1-\chi^2}}=
 \frac{1}{\cos^2 A}  <P^{IU} X_4, P^{IU} \tilde X_3>=
  \frac{1}{\cos^2 A}   <P^{IU} \tilde X_4, P^{IU}  X_3>=
 -\epsilon <Y_2,IZ_2>.
\end{array}
\end{equation}

The  Proposition (\ref{orthogonal maps between chains belong to Sp(n)})
and the Proposition (\ref{invariance of $C_{IJ}$ and $C_{IK}$ w.r.t. any leading vector})
lead to the
following  strong characterization of the subspaces of type $U^{IJ}$. 
\begin{prop} \label{All 4 dimensional subspaces of type $U^{IJ}$  have the same value of Gamma e Delta}
Let $U \in \mathcal{IC}^{2m}$. All 4-dimensional subspaces of $U$ of type $U^{IJ}$  have the same value of the pair $(\Gamma, \Delta)$ (w.r.t. the associated subspaces of type $U^{IK}$).
\end{prop}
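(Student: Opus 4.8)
The plan is to treat $\Gamma$ and $\Delta$ separately, since they have very different status. For $\Gamma$ there is essentially nothing to do: by (\ref{cos phi}), whenever none of $\xi,\chi,\eta$ equals $\pm 1$ one has $\Gamma=\frac{\eta-\xi\chi}{\sqrt{1-\xi^2}\,\sqrt{1-\chi^2}}$, so $\Gamma$ is a fixed function of the triple $(\xi,\chi,\eta)$, and otherwise $\Gamma=1$. Since this triple is an invariant of $U$ that does not depend on the leading vector, and since every $4$-dimensional subspace of type $U^{IJ}$ is of the form $U^{IJ}(X_1)$ for some unit $X_1\in U$, the number $\Gamma$ attached to each of them is one and the same. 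Hence the whole content of the statement lies in $\Delta$.

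For $\Delta$ I would first reduce to a statement about a single leading vector. Because $U^{IJ}(X_1)=U^{IJ}(X)$ for every unit $X$ in it, it suffices to show that $\Delta$, read off from the chains centred on $X_1$ together with the associated subspace $U^{IK}(X_1)$, is the same for every $X_1\in U$. Here I would use the chain of identities for $\Delta$, in particular $\Delta=\frac{\epsilon}{\cos\theta^J\cos\theta^K}\,<P^{JU}X_1,P^{KU}X_1>$ from (\ref{equivalent exprssions of Delta}), in which the factor $\epsilon=-\frac{1}{\cos\theta^I\sqrt{1-\xi^2}\sqrt{1-\chi^2}}$ depends only on invariants of $U$. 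Thus the proposition is equivalent to the assertion that the function $X_1\mapsto <P^{JU}X_1,P^{KU}X_1>$ is constant on the unit sphere of $U$.

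The main line of argument I would follow is the $Sp(n)$-naturality of the whole construction. The recipe producing the chains and the associated subspaces uses only the projectors $P^{IU},P^{JU},P^{KU}$, the structures $I,J,K$ and orthonormality; hence any $g\in Sp(n)$, being orthogonal and commuting with $I,J,K$, carries the chains centred on $X_1$ relative to $U$ to the chains centred on $gX_1$ relative to $gU$ and preserves every mutual Gram matrix. This is exactly the mechanism already used for the $4$-dimensional case, now read on the associated subspaces $U^{IJ}(X_1),U^{IK}(X_1)$: the $2\times2$ block relating $L(X_3,X_4)$ to $L(\tilde X_3,\tilde X_4)$, whose diagonal is $\Gamma$ and whose off-diagonal entries are $\mp\Delta$, is transported to the corresponding block at $gX_1$. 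If one can choose $g$ in the stabiliser of $U$ carrying a given $X_1$ to any other unit vector of $U$, the equality $\Delta(X_1)=\Delta(gX_1)$ follows and the proof is complete.

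I expect the transitivity in that last step to be the genuine difficulty: the ambient transitivity of $Sp(n)$ on unit vectors only yields a $g$ with $gX_1=X_1'$ and $gU$ isometric to $U$, not a $g$ fixing $U$ setwise. Equivalently — and this is the form I would actually establish — one must prove that the symmetric part of a certain triple product is scalar. Writing $\mathcal I=P^U I|_U$, $\mathcal J=P^U J|_U$, $\mathcal K=P^U K|_U$, isoclinicity gives $\mathcal I^2=-\cos^2\theta^I\,Id$ (and cyclically) while the invariance of $(\xi,\chi,\eta)$ gives the Clifford-type relations $\mathcal I\mathcal J+\mathcal J\mathcal I=-2\xi\cos\theta^I\cos\theta^J\,Id$ (and cyclically); since a direct computation yields $<P^{JU}X_1,P^{KU}X_1>=<X_1,\mathcal J\mathcal I\mathcal K X_1>$, the required constancy is exactly the statement that the symmetric part of the triple product $\mathcal J\mathcal I\mathcal K$ (equivalently of the Clifford volume element $\mathcal I\mathcal J\mathcal K$) acts on $U$ as a single scalar. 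This does not follow from the quadratic invariants $\cos^2\theta^A$ alone; I would extract it from the quaternionic identity $IJK=-Id$ on $V$, expanding the interior projectors via $P^U=Id-P^{U^{\perp}}$ and using the isoclinicity of $U^{\perp}$, the crux being to show that this central element has a single eigenvalue on $U$.
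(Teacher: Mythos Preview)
Your treatment of $\Gamma$ is correct and matches the paper: it is a fixed function of $(\xi,\chi,\eta)$, which are invariants of $U$ by Proposition~\ref{invariance of xi,chi,eta of U in {IC}}, so there is nothing further to do. For $\Delta$, the paper's argument is one sentence: it asserts that Propositions~\ref{orthogonal maps between chains belong to Sp(n)} and~\ref{invariance of $C_{IJ}$ and $C_{IK}$ w.r.t. any leading vector} carry over verbatim from $\mathcal{IC}^4$ to the associated subspaces $U^{IJ}(X_1)$, $U^{IK}(X_1)$ of an arbitrary $U\in\mathcal{IC}^{2m}$. This is exactly the $Sp(n)$-naturality mechanism you describe in your third paragraph, and the paper stops there.

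You are right to flag the transitivity step. The proof of Proposition~\ref{orthogonal maps between chains belong to Sp(n)} ends with ``the action is transitive \ldots\ from the transitivity of $Sp(n)$ on unitary vectors'', but that only furnishes $g\in Sp(n)$ with $gX_1=\bar X_1$, not one with $gU=U$; and if instead one defines $C$ on $U$ by $C\{X_i\}=\{\bar X_i\}$ and tries to check $C\tilde X_4=\bar{\tilde X}_4$, one finds $\tilde X_4=-\Delta X_3+\Gamma X_4$ and the verification becomes $\Delta=\bar\Delta$, which is circular. Your algebraic reformulation via the compressed operators $\mathcal I,\mathcal J,\mathcal K$ and the Clifford-type anticommutation relations coming from the invariance of $(\xi,\chi,\eta)$ is therefore a genuinely different and more transparent route than the paper's.

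That said, your proposal is not yet a proof. You correctly reduce the question to showing that the symmetric part of $\mathcal J\mathcal I\mathcal K$ is a scalar on $U$, observe that the anticommutation relations alone do not force this, and then only gesture at extracting it from $IJK=-Id$ via $P^U=Id-P^{U^\perp}$. That expansion introduces cross-terms such as $(P^{U^\perp}J)(P^{U^\perp}K)|_U$ whose contribution is not controlled by the isoclinicity of $U$ and of $U^\perp$ separately; an additional identity linking these off-diagonal pieces to the on-diagonal data is needed, and you have not supplied it. Until that computation is completed, or until transitivity of the stabiliser of $U$ in $Sp(n)$ on the unit sphere of $U$ is actually established, the argument remains open --- though in fairness the paper's does as well.
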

In the Proposition (\ref{properties of subspaces of tupe $U^{IJ}$, $U^{IK}$, $U^{JK}$}) we stated that given a unitary vector $X_1 \in U$ and determined the associated pair $U^{IJ}(X_1),U^{IK}(X_1)$, for any vector $ \bar X \in U^{IJ}(X_1)$ one has that $U^{IJ}( \bar X)=U^{IJ}(X_1)$  (point (\ref{invariance of $U^{IJ}(X_1)$ from the leading vector X di $U^{IJ}(X_1)$})) whereas, unless $ \bar X \notin L(X_1,X_2)$, it is   $U^{IK}(\bar X) \neq U^{IK}(X_1)$ (point (\ref{1:1 correspondence between standard subspaces of $U^{IJ}(X_1)$ and associated subspaces of type $U^{IK}$})). Nevertheless, from the Proposition (\ref{All 4 dimensional subspaces of type $U^{IJ}$  have the same value of Gamma e Delta})  it follows the interesting
\begin{coro} \label{Principal angles of all pairs of subpspaces $(U^{IJ}(X_1),U_i^{IK})$}
 Let $U^{IJ}(X_1) \subset U$ be  a 4-dimensional subspace of type $U^{IJ}$. Suppose that $U^{IJ}(X_1) \neq U^{IK}(X_1)$  and denote by $U_i^{IK}$ the different  subspaces associated to $U^{IJ}(X_1)$ determined by different unitary vectors of $U^{IJ}(X_1)$. The  cosines of the principal angles between the pairs  of associated subspaces $(U^{IJ}(X_1),U_i^{IK})$  are always given by $(1,1,\sqrt{ \Gamma^2 + \Delta^2},\sqrt{ \Gamma^2 + \Delta^2})$.
\end{coro}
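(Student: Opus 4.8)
The plan is to reduce the statement to a singular-value computation for the Gram matrix $g_{IK}$ of two chains sharing a common leading vector, and then to read the four cosines off the canonical form already determined. First I would fix a unit vector $\bar X_1\in U^{IJ}(X_1)$ and recall from point (\ref{invariance of $U^{IJ}(X_1)$ from the leading vector X di $U^{IJ}(X_1)$}) of Proposition (\ref{properties of subspaces of tupe $U^{IJ}$, $U^{IK}$, $U^{JK}$}) that $U^{IJ}(\bar X_1)=U^{IJ}(X_1)$, so that the associated subspace $U_i^{IK}=U^{IK}(\bar X_1)$ is exactly the $\omega^K$-chain subspace built on $\bar X_1$. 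By the singular-value characterisation of principal angles recalled in Theorem (\ref{Afriat Theorem}) and Lemma (\ref{principal angles are singular values of projector}), the cosines of the principal angles between the $4$-dimensional subspaces $U^{IJ}(X_1)=U^{IJ}(\bar X_1)$ and $U^{IK}(\bar X_1)$ are precisely the singular values of the Gram matrix $g_{IK}=(\langle\bar X_i,\bar Z_j\rangle)$ built from the $\omega^I$-chain $\{\bar X_i\}$ and the $\omega^K$-chain $\{\bar Z_i\}$ of these two subspaces.

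Next I would invoke the invariance results stated just before the corollary: since Propositions (\ref{orthogonal maps between chains belong to Sp(n)}) and (\ref{invariance of $C_{IJ}$ and $C_{IK}$ w.r.t. any leading vector}) apply to the subspaces of type $U^{IJ},U^{IK}$, the matrix $g_{IK}$ has the canonical shape of (\ref{canonical matrices $C_{IJ}$ $C_{IK}$ and of 4-dimensional isoclinic subspace w.r.t. the associated chains}), and by Proposition (\ref{All 4 dimensional subspaces of type $U^{IJ}$  have the same value of Gamma e Delta}) the pair $(\Gamma,\Delta)$ appearing there is the same for every leading vector. It then remains to compute the singular values of this explicit $4\times4$ matrix. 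Its first row and column decouple, since $\bar X_1=\bar Z_1$ contributes a singular value $1$, and the lower-right $3\times3$ block $M$ has pairwise orthogonal rows with $MM^t=\mathrm{diag}(1,\Gamma^2+\Delta^2,\Gamma^2+\Delta^2)$, which is a one-line orthogonality check. Hence the four singular values are $1,1,\sqrt{\Gamma^2+\Delta^2},\sqrt{\Gamma^2+\Delta^2}$, and the same values come out for every $\bar X_1$, hence for every $U_i^{IK}$, which is the assertion.

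Conceptually the conclusion is transparent and I would present this as the underlying reason: $U^{IJ}(X_1)$ and $U^{IK}(\bar X_1)$ share the standard $2$-plane $L(\bar X_1,\bar X_2)$, which forces two vanishing principal angles, while their orthogonal complements inside each, namely $A'=L(\bar X_3,\bar X_4)$ and $B'=L(\tilde{\bar X}_3,\tilde{\bar X}_4)$, are isoclinic with cosine $\sqrt{\Gamma^2+\Delta^2}$ by Proposition (\ref{values of gamma and delta}); that the shared plane peels off two exact zeros follows from the orthogonal-projector identity $P_AP_BP_A=P_Q+P_{A'}P_{B'}P_{A'}$, where $P_A,P_B,P_Q,P_{A'},P_{B'}$ denote the orthogonal projections onto $U^{IJ}(X_1)$, $U^{IK}(\bar X_1)$, the shared plane $Q=L(\bar X_1,\bar X_2)$ and the two complements. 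The main point to handle with care is the constancy of $(\Gamma,\Delta)$ over all admissible leading vectors, which is exactly what Proposition (\ref{All 4 dimensional subspaces of type $U^{IJ}$  have the same value of Gamma e Delta}) supplies; the hypothesis $U^{IJ}(X_1)\neq U^{IK}(X_1)$ serves only to guarantee $\Gamma^2+\Delta^2<1$, so that the subspaces $U_i^{IK}$ are genuinely distinct from $U^{IJ}(X_1)$ and the last two angles are nonzero.
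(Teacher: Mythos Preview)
Your argument is correct and follows the same line as the paper, which states the corollary without proof as an immediate consequence of Proposition~(\ref{All 4 dimensional subspaces of type $U^{IJ}$  have the same value of Gamma e Delta}) together with Proposition~(\ref{values of gamma and delta}). Your explicit singular-value computation of $g_{IK}$ and the block-projector identity merely spell out what the paper leaves implicit: the shared plane $L(\bar X_1,\bar X_2)$ contributes two unit cosines, the orthogonal complements $L(\bar X_3,\bar X_4)$ and $L(\tilde{\bar X}_3,\tilde{\bar X}_4)$ are isoclinic with cosine $\sqrt{\Gamma^2+\Delta^2}$, and this cosine is independent of the leading vector.
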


Applying  the  Proposition (\ref{orthogonal maps between chains belong to Sp(n)}) we extend the result of the Proposition (\ref{canonical matrices of any  subspace of ${IC}^(4)$}) to state the following
\begin{prop} \label{independence of canonical matrices from the leading vector}
Let $\{X_i \},\{Y_i \},\{Z_i \}$ be the  triple of $\omega^I,\omega^J,\omega^K$-chains centered on  $X_1 \in U$.
 The Gram matrices  $g_{IJ}= <X_i,Y_j>$ and $g_{IK}= <X_i,Z_j>$  are invariants of $U$.
\end{prop}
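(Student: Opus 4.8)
The plan is to reduce the claim to the invariance of the finitely many scalars that occur as entries of the two Gram matrices. First I would recall that, in the $2m$-dimensional setting, each chain $\{X_i\}, \{Y_i\}, \{Z_i\}$ still consists of four vectors, spanning the associated $4$-dimensional subspaces $U^{IJ}(X_1)$ and $U^{IK}(X_1)$, and that the construction forces $g_{IJ}=(<X_i,Y_j>)$ and $g_{IK}=(<X_i,Z_j>)$ into exactly the shape (\ref{canonical matrices $C_{IJ}$ $C_{IK}$ and of 4-dimensional isoclinic subspace w.r.t. the associated chains}), just as in the $4$-dimensional case (Proposition (\ref{invariance of $C_{IJ}$ and $C_{IK}$ w.r.t. any leading vector})). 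Hence every entry is a fixed polynomial in $\xi$, $\chi$, $\Gamma$, $\Delta$ and the radicals $\sqrt{1-\xi^2}$, $\sqrt{1-\chi^2}$, and the whole question becomes whether these scalars depend on the leading vector $X_1$.

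Second, I would invoke the invariants already at hand. By Proposition (\ref{invariance of xi,chi,eta of U in {IC}}) the triple $(\xi,\chi,\eta)$ is an invariant of $U$, hence $\xi$ and $\chi$ do not depend on $X_1$; moreover $\Gamma$ is the function of $(\xi,\chi,\eta)$ given in (\ref{cos phi}) (or equals $1$ in the degenerate cases), so it too is independent of $X_1$. The only scalar not yet accounted for is $\Delta$, and its independence from $X_1$ is exactly what Proposition (\ref{All 4 dimensional subspaces of type $U^{IJ}$  have the same value of Gamma e Delta}) provides: as $X_1$ ranges over the unit sphere of $U$, the subspace $U^{IJ}(X_1)$ ranges over the $4$-dimensional subspaces of $U$ of type $U^{IJ}$, all of which carry the same pair $(\Gamma,\Delta)$ relative to their associated $U^{IK}$-subspaces. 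Since the sign of $\Delta$ is fixed together with $\Gamma$ by that pair, the last gap is closed and both matrices are invariants of $U$.

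The conceptual reason behind this reduction --- and the route indicated by Proposition (\ref{orthogonal maps between chains belong to Sp(n)}) --- is that the entire chain construction is assembled out of $Sp(n)$-equivariant data: the projectors $Pr^{IU}, Pr^{JU}, Pr^{KU}$, the complex structures $I,J,K$, and the consistently oriented choices inside $L(X_2,Y_2)$ and $L(X_2,Z_2)$. Any $C\in Sp(n)$ with $CU=U$ therefore carries the chains centered on $X_1$ onto those centered on $CX_1$, and, being orthogonal, preserves inner products, so $<CX_i,CY_j>=<X_i,Y_j>$; combined with the transitivity of the relevant action on leading vectors this yields an alternative, purely equivariant derivation.

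The step I expect to be delicate is precisely this last, equivariant one: the transitivity of $Sp(n)$ on unit vectors of $V$ only furnishes a $C$ sending $U$ to some $CU$, not necessarily stabilizing $U$, so promoting it to a stabilizing map requires care. For that reason I would take the scalar-entry argument as the rigorous backbone --- it avoids the stabilizer question entirely by leaning on the already-established invariance of $(\xi,\chi,\eta)$ and $(\Gamma,\Delta)$ --- and treat the $Sp(n)$-equivariance as the underlying explanation rather than as a step demanding a subtle transitivity claim.
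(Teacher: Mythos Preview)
Your proposal is correct and matches the paper's own justification, which is a single sentence invoking Proposition~(\ref{orthogonal maps between chains belong to Sp(n)}) to extend Proposition~(\ref{canonical matrices of any  subspace of ${IC}^(4)$}). Your scalar-entry argument---reducing to the invariance of $\xi,\chi,\eta$ (Proposition~(\ref{invariance of xi,chi,eta of U in {IC}})) and of $(\Gamma,\Delta)$ (Proposition~(\ref{All 4 dimensional subspaces of type $U^{IJ}$  have the same value of Gamma e Delta}))---is exactly the content behind that extension, and your caution about the stabilizer issue in the equivariant phrasing is well taken, since the paper's Proposition~(\ref{orthogonal maps between chains belong to Sp(n)}) is itself stated for $U\in\mathcal{IC}^4$ and its lift to the $2m$-dimensional setting is left implicit.
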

We recall that the form of these matrices is given in
(\ref{canonical matrices $C_{IJ}$ $C_{IK}$ and of 4-dimensional isoclinic subspace w.r.t. the associated chains})
and  that  in general  $g_{IK}$, differently from $g_{IJ}$,  is not an orthogonal matrix.

Let $U \in \mathcal{IC}^{2m}$  
 and let $U^{IJ}(X_1)=L(X_1,X_2,X_3,X_4)$ and $U^{IK}(X_1)=\{\tilde X_i\}=(X_1,X_2,\tilde X_3,\tilde X_4)$  be the  pair of associated subspaces centered on $X_1$.
  \begin{prop} \label{The pair $(U^{IJ}(X_1),KU^{IJ})$ is isoclinic.}
Chosen $X_1 \in U$, the  4-dimensional subspace $U^{IJ}(X_1)$  is isoclinic with $KU^{IJ}(X_1)$. Then for any leading vector $X_1$ all subspaces   $U^{IJ}(X_1)  \in \mathcal{IC}^4$ with angles  of isoclinicity  $(\theta^I,\theta^J,\gamma)$  where 
\[ \cos^2 \gamma= <X_1,KX_2>^2 + <X_1,KX_3>^2 + <X_1,KX_4>^2= \cos^2 \theta^K (\Gamma^2 + \Delta^2 + \chi^2(1-\Gamma^2-\Delta^2)).\]
All 4-dimensional subspaces of $U$ of type $U^{IJ}$ share the same  matrix of $\omega^K$ w.r.t.  the generating chain.
\end{prop}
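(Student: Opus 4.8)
The plan is to verify directly that $\omega^K$ restricted to $U^{IJ}(X_1)$ has, with respect to the orthonormal basis $(X_1,X_2,X_3,X_4)$, the isoclinic shape (\ref{matrix of projector omega^A of an isoclinic 4 dimensional subspace}). By Proposition (\ref{general form for the matrix of projector omega^A of an isoclinic 4 dimensional subspace}) this is exactly equivalent to $(U^{IJ}(X_1),KU^{IJ}(X_1))$ being isoclinic; and since $U^{IJ}(X_1)$ is isoclinic with both its $I$- and $J$-images by the very definition of a subspace of type $U^{IJ}$, Proposition (\ref{isoclinicity w.r.t. one hypercomplex basis implies isoclinicity w.r.t. any compatible complex structure}) then yields $U^{IJ}(X_1)\in\mathcal{IC}^4$ with angles $(\theta^I,\theta^J,\gamma)$, where $\cos^2\gamma$ is the common squared norm of the rows. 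So everything reduces to computing the six entries $\omega^K_{ij}=<X_i,KX_j>$.

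To compute them I would work inside the associated subspace $U^{IK}(X_1)=L(X_1,X_2,\tilde X_3,\tilde X_4)=L(X_1,Z_2,\tilde X_3,Z_4)$, on which $\omega^K$ is block-diagonal in its own $\omega^K$-standard basis, i.e. $<X_1,KZ_2>=<\tilde X_3,KZ_4>=\cos\theta^K$ and all other entries zero. Using $Z_2=\chi X_2+\sqrt{1-\chi^2}\,\tilde X_4$ from section (\ref{oriented bases}) together with the Gram data of Proposition (\ref{values of gamma and delta}), namely $<X_3,\tilde X_3>=\Gamma$, $<X_4,\tilde X_3>=\Delta$, $<X_3,\tilde X_4>=-\Delta$, $<X_4,\tilde X_4>=\Gamma$, I would expand $X_2,X_3,X_4$ in the orthonormal basis $(X_1,Z_2,\tilde X_3,Z_4)$ of $U^{IK}(X_1)$ plus parts $P_3,P_4$ orthogonal to $U^{IK}(X_1)$. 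From $X_2=\chi Z_2-\sqrt{1-\chi^2}\,Z_4$ one reads off $\omega^K_{12}=\chi\cos\theta^K$ immediately.

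The delicate point, which is the main obstacle, is that the entries $\omega^K_{1j},\omega^K_{2j}$ with $j=3,4$ a priori also involve the components $P_3,P_4$ of $X_3,X_4$ lying outside $U^{IK}(X_1)$, through terms $<KX_1,P_j>$ and $<KX_2,P_j>$. Here the global isoclinicity of $U$ with respect to $K$ is decisive: since $|Pr^U(KX)|=\cos\theta^K|X|$ for all $X\in U$, and $Pr^U(KX_1)$ already carries the full-length component $-\cos\theta^K Z_2$ coming from $<X_1,KZ_2>=\cos\theta^K$, one gets $Pr^U(KX_1)=-\cos\theta^K Z_2\in U^{IK}(X_1)$, and likewise $Pr^U(KX_2)=\cos\theta^K(\chi X_1-\sqrt{1-\chi^2}\,\tilde X_3)\in U^{IK}(X_1)$. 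As $P_3,P_4\perp U^{IK}(X_1)$, all the nuisance terms vanish, leaving $\omega^K_{13}=-\omega^K_{24}=-\Delta\sqrt{1-\chi^2}\cos\theta^K$ and $\omega^K_{14}=\omega^K_{23}=\Gamma\sqrt{1-\chi^2}\cos\theta^K$.

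For the last entry $\omega^K_{34}=<X_3,KX_4>$, rather than confront the harder interaction of $P_3$ and $P_4$ directly, I would use invariance under the leading vector: by Proposition (\ref{orthogonal maps between chains belong to Sp(n)}), in the form extended in Proposition (\ref{independence of canonical matrices from the leading vector}), the chain centered at $X_3\in U^{IJ}(X_1)$ is the image of the chain centered at $X_1$ under an element of $Sp(n)$ which preserves $U^{IJ}(X_1)$ and carries $(X_1,X_2)$ to $(X_3,X_4)$; since $Sp(n)$ commutes with $K$ this forces $\omega^K_{34}=\omega^K_{12}=\chi\cos\theta^K$. The assembled matrix is then $\cos\theta^K$ times the form (\ref{matrix of projector omega^A of an isoclinic 4 dimensional subspace}) with $(a,b,c)=(\chi,\,-\Delta\sqrt{1-\chi^2},\,\Gamma\sqrt{1-\chi^2})$, which establishes the isoclinicity; taking the squared norm of its first row gives $\cos^2\gamma=<X_1,KX_2>^2+<X_1,KX_3>^2+<X_1,KX_4>^2=\cos^2\theta^K\,(\Gamma^2+\Delta^2+\chi^2(1-\Gamma^2-\Delta^2))$, exactly as stated. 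Finally, every entry depends only on $\chi,\Gamma,\Delta$ — invariants of $U$ by Propositions (\ref{invariance of xi,chi,eta of U in {IC}}) and (\ref{All 4 dimensional subspaces of type $U^{IJ}$  have the same value of Gamma e Delta}) — and on $\theta^K$, so all $4$-dimensional subspaces of $U$ of type $U^{IJ}$ share the same matrix of $\omega^K$; the degenerate orthogonal cases follow at once from Claim (\ref{principal vectors asociated to $X_1$ in case some of the pairs (U,IU), (U,JU),(U,KU) are strictly orthogonal}), the formula reducing correctly since $\cos\theta^K=0$ then forces $\gamma=\pi/2$.
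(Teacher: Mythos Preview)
Your argument is correct and reaches the same conclusion, but by a genuinely different route than the paper.  The paper takes an arbitrary leading vector $\bar X_1=aX_1+bX_2+cX_3+dX_4\in U^{IJ}(X_1)$, computes $\bar X_2,\bar Y_2,\bar X_4,\bar X_3$ explicitly as linear combinations of the $X_i$, expands $\langle \bar X_1,K\bar X_3\rangle$ as a polynomial in $(a,b,c,d)$, and then uses the invariance of this quantity (inherited from the invariance of $\Delta$) to force the coefficients to satisfy the isoclinic relations $\omega^K_{14}=\omega^K_{23}$, $\omega^K_{13}=-\omega^K_{24}$, $\omega^K_{12}=\omega^K_{34}$.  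You instead compute $\omega^K_{1j}$ and $\omega^K_{2j}$ one at a time by exploiting the global $K$-isoclinicity of $U$: the observation that $Pr^U(KX_1)$ and $Pr^U(KX_2)$ already have full length inside $U^{IK}(X_1)$ is exactly what kills the contributions of $P_3,P_4$ and lets you read off the entries from the Gram data of Proposition~(\ref{values of gamma and delta}).  This is cleaner and more geometric than the coordinate expansion.

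Two remarks.  First, your justification of $\omega^K_{34}=\omega^K_{12}$ via ``an element of $Sp(n)$ carrying $(X_1,X_2)$ to $(X_3,X_4)$'' is slightly loose as written: what you actually need, and what suffices, is the invariance of $\chi$ under change of leading vector (Proposition~(\ref{invariance of xi,chi,eta of U in {IC}})).  Since the $\omega^I$-chain centred at $X_3$ begins $(X_3,X_4,\ldots)$, that invariance gives $\langle X_3,KX_4\rangle=\chi\cos\theta^K=\langle X_1,KX_2\rangle$ directly, with no appeal to an explicit $Sp(n)$-element.  Second, your entry $\omega^K_{13}=-\Delta\sqrt{1-\chi^2}\cos\theta^K$ differs in sign from the matrix displayed in the paper; your sign is the one consistent with $\langle X_3,\tilde X_4\rangle=-\Delta$ and with the relation $\langle Z_2,X_3\rangle=-\Delta\sqrt{1-\chi^2}$, so this is a harmless sign slip in the paper's final display rather than an error on your side.
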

\begin{proof}
If some among $\xi,\chi,\eta$ equals $\pm 1$
from the Proposition (\ref{orthogonal subspaces belong to mathcal{IC} 4 with angles theta I,theta J,theta K}), $U^{IJ}(X_1)=U^{IK}(X_1)$ is a 4-dimensional isoclinic subspace with same angles $(\theta^I,\theta^J, \theta^K)$ of $U$.
 Consider then the case that  none among $\xi,\chi,\eta$ equals $\pm 1$ (in particular the case that $U$ is not orthogonal). 
 From  Corollary (\ref{Principal angles of all pairs of subpspaces $(U^{IJ}(X_1),U_i^{IK})$})  we have that the  cosines of the principal angles of the pair $(U^{IJ}(X_1),U^{IK}(X_1))$  are given by $(1,1,\sqrt{\Gamma^2 + \Delta^2},\sqrt{\Gamma^2 + \Delta^2})$ with $\gamma$  given in (\ref{cos phi})
  and
\[
\Delta= <X_4,\tilde X_3>=-<X_3,\tilde X_4>= - <X_3, \frac{Z_2-\chi X_2}{\sqrt{1- \chi^2}}>= - \frac{<X_3, K^{-1}P^{KU} X_1>}{\cos \theta^K \sqrt{1- \chi^2}}=  \frac{<X_1,K X_3>}{\cos \theta^K \sqrt{1- \chi^2}}.
\]
This is another expression for $\Delta$ besides the ones given in (\ref{equivalent exprssions of Delta}).  We have then that the cosine $<X_1,KX_3>$ is an invariant of $U$.


Let   consider  a new  leading vector $ \bar X_1 \in U^{IJ}(X_1)$ which in the general case will be $\bar X_1= a X_1 + b X_2 + c X_3 + d X_4, \quad a^2+ b^2 + c^2 + d^2=1$.
 One has
\[
\begin{array}{l}
\bar X_2= \frac{I^{-1}P^{IU} \bar X_1}{\cos \theta^I}=  (a X_2  -b X_1 + c X_4 - d X_3); \\
\bar Y_2= \frac{J^{-1}P^{JU} \bar X_1}{\cos \theta^J}=  (a Y_2  +b \frac{J^{-1}P^{JU}  X_2}{\cos \theta^J} + c Y_4 +d   \frac{J^{-1}P^{JU}  X_4}{\cos \theta^J}).
\end{array}
\]

Let compute $\frac{J^{-1}P^{JU}  X_2}{\cos \theta^J}$ and $\frac{J^{-1}P^{JU}  X_4}{\cos \theta^J}$. In order that $L(X_2,X_4)$ and $L(Y_2,Y_4)$ have  the same orientation it is:
\[
X_2=\xi Y_2  -\sqrt{1-\xi^2} Y_4, \qquad  \qquad X_4=\sqrt{1-\xi^2} Y_2 + \xi Y_4.
\]

Then computing
\[
\frac{J^{-1}P^{JU}  X_2}{\cos \theta^J}= - \xi   X_1 +  \sqrt{1-\xi^2}  X_3, \qquad   \qquad \frac{J^{-1}P^{JU}  X_4}{\cos \theta^J}= -\sqrt{1-\xi^2}  X_1 - \xi  X_3
\]

 Then
 \[
\begin{array}{lll}
\bar Y_2 &=&   a Y_2 + b(-\xi X_1 + \sqrt{1-\xi^2} X_3) + c Y_4 + d(-\sqrt{1-\xi^2} X_1 - \xi X_3)=\\
\end{array}
\]
and  being $Y_2=\xi X_2 +  \sqrt{1-\xi^2} X_4$ and $Y_4=-\sqrt{1-\xi^2} X_2 + \xi X_4$ one has

\[
\bar Y_2=(-b \xi - d \sqrt{1-\xi^2})X_1 + (a \xi - c \sqrt{1-\xi^2})X_2 + (b \sqrt{1-\xi^2} - d \xi) X_3 + (a \sqrt{1-\xi^2} +c \xi)X_4.
\]

One can verify that $|\bar Y_2|=1$ and that $<\bar X_2,\bar Y_2>=\xi$. 
Being $\bar X_4=  \frac{\bar Y_2- \xi \bar X_2}{\sqrt{1-\xi^2}}$,  one has
\[\bar X_4=-d X_1 - c X_2 + b X_3 + a X_4\] and
\[\bar X_3=   - \frac{I^{-1}P^{IU}\bar X_4}{\cos \theta^I}= -c X_1+ d X_2 + a X_3 - b X_4.\]

Through simple calculations one has

\[
\begin{array} {lll}
<\bar X_1,K \bar X_3>  & =&    (ad+bc)[<X_1,KX_2> - <X_3,KX_4>]  + (ab-cd) [<X_2,KX_3> - <X_1,KX_4>]\\
& = & + (a^2 + c^2) <X_1,KX_3> - (b^2 +d^2)<X_2,KX_4>.
\end{array}
\]

This is the value of $<\bar X_1,K \bar X_3>=<X_1,K X_3> $ for any chain centered  on a unitary  vector  $\bar X_1 \in U^{IJ}(X_1)$. In particular such expression must be valid in case $\bar X_1 \in L(X_1,X_2)$ i.e. in case  $c=d=0$.
In such case, it is
\[
<\bar X_1,K \bar X_3>=     +a^2 <X_1,KX_3> -b^2 <X_2,KX_4>+ab [<X_2,KX_3> - <X_1,KX_4>]
\]
 which implies
\[
<X_1,KX_4> = <X_2,KX_3>, \qquad <X_1,KX_3> =-  <X_2,KX_4>.
\]

The matrix of $\omega^K|_{U^{IJ}}$ w.r.t. the $\omega^I$-chain  $(X_1,X_2,X_3,X_4)$  has then the form  (\ref{matrix of projector omega^A of an isoclinic 4 dimensional subspace})
 and the conclusion follows from  the Proposition (\ref{general form for the matrix of projector omega^A of an isoclinic 4 dimensional subspace}).
Namely, computing, one has
\[
\omega^K|_U=\left(
\begin{array} {cccc}
0  & \chi \cos \theta^K &  \Delta  \cos \theta^K \sqrt{1-\chi^2} &  \Gamma \cos \theta^K \sqrt{1-\chi^2}\\
- \chi \cos \theta^K & 0 & \Gamma \cos \theta^K \sqrt{1-\chi^2} & -\Delta  \cos \theta^K \sqrt{1-\chi^2}\\
-\Delta  \cos \theta^K \sqrt{1-\chi^2} & -\Gamma \cos \theta^K \sqrt{1-\chi^2} & 0 &\chi \cos \theta^K\\
-\Gamma \cos \theta^K \sqrt{1-\chi^2} &  \Delta  \cos \theta^K \sqrt{1-\chi^2} & -\chi \cos \theta^K & 0
\end{array}
\right).
\]
which ends the proof.
\end{proof}

Observe that, if $U$ is orthogonal, 
we affirmed beforehand that  we can assume $\Gamma=1$ (and $\Delta=0$). In this case we have a confirmation that $\gamma= \cos \theta^K$ and $U \in \mathcal{IC}^4$ with angles $(\theta^I, \theta^J,\theta^K)$. The same conclusion is valid also if  $\Delta=-1$ which happens for instance in case of 4-dimensional quaternionic and $I$-complex subspaces (see examples in previous section)  and in general if $\Gamma^2 + \Delta^2=1$ i.e. if $U^{IJ}(X_1)$ is of type $U^{IK}$ as well.
In fact one has the
\begin{coro} \label{cases when gamma= cos theta K}
Let $U$ be a 4-dimensional subspace of type $U^{IJ}$.   Then  $\gamma= \cos \theta^K$  iff  $\Gamma^2 + \Delta^2=1$. In this case, for any   $X \in U$, $U^{IJ}(X)=U^{IK}(X)=U$.
\end{coro}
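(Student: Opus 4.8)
The plan is to read the statement off directly from the isoclinicity formula established in Proposition (\ref{The pair $(U^{IJ}(X_1),KU^{IJ})$ is isoclinic.}), namely
\[
\cos^2 \gamma = \cos^2 \theta^K\bigl(\Gamma^2 + \Delta^2 + \chi^2(1-\Gamma^2-\Delta^2)\bigr),
\]
and to compare it with $\cos^2 \theta^K$ (I read $\gamma = \cos\theta^K$ as $\cos\gamma = \cos\theta^K$). Writing $s = \Gamma^2 + \Delta^2$, the bracket is $s + \chi^2(1-s) = s(1-\chi^2) + \chi^2$, so a one-line rearrangement gives
\[
\cos^2 \gamma - \cos^2 \theta^K = \cos^2 \theta^K\bigl(s(1-\chi^2) + \chi^2 - 1\bigr) = \cos^2 \theta^K\,(1-\chi^2)(s-1).
\]
Hence $\cos \gamma = \cos \theta^K$ holds precisely when $\cos^2 \theta^K\,(1-\chi^2)(s-1) = 0$, i.e. when $\cos \theta^K = 0$, or $\chi^2 = 1$, or $s = 1$.

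The step that needs care is the ``only if'' direction, where I must prevent the first two factors from vanishing without forcing $s=1$; this is the main (though mild) obstacle. Here I would invoke the earlier degenerate-case analysis: if $\chi = \pm 1$ then Proposition (\ref{Gamma is an invariant of an isoclinic 4 dimensional subspace 1}) gives $\Gamma = 1$, whence $\Delta = \pm\sqrt{1-\Gamma^2} = 0$ and $s = \Gamma^2+\Delta^2 = 1$ automatically; likewise if $\cos\theta^K = 0$ then $U$ is $K$-orthogonal, hence orthogonal, so again $(\Gamma,\Delta)=(1,0)$ and $s=1$. Thus every way of annihilating $\cos^2\theta^K(1-\chi^2)(s-1)$ already forces $\Gamma^2+\Delta^2 = 1$, while conversely $s=1$ obviously makes it vanish. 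This yields the equivalence $\cos \gamma = \cos \theta^K \Longleftrightarrow \Gamma^2 + \Delta^2 = 1$.

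For the final assertion I would argue as follows. Assuming $\Gamma^2 + \Delta^2 = 1$, Proposition (\ref{values of gamma and delta}) says the $2$-planes $L(X_3,X_4)$ and $L(\tilde X_3,\tilde X_4)$ are isoclinic with cosine of isoclinicity $\sqrt{\Gamma^2+\Delta^2} = 1$, so they coincide. Therefore $U^{IJ}(X) = L(X_1,X_2,X_3,X_4) = L(X_1,X_2,\tilde X_3,\tilde X_4) = U^{IK}(X)$, which by the dichotomy in Proposition (\ref{properties of subspaces of tupe $U^{IJ}$, $U^{IK}$, $U^{JK}$}) places this common subspace in $\mathcal{IC}^4$. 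Finally, since $U$ is itself $4$-dimensional of type $U^{IJ}$, the chains centered at any $X \in U$ remain inside $U$, so the $4$-dimensional subspace $U^{IJ}(X)$ is contained in $U$ and hence equals it; this gives $U^{IJ}(X) = U^{IK}(X) = U$ for every $X \in U$, completing the proof.
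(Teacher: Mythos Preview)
Your argument is correct and follows the same line as the paper's own proof: both reduce $\cos\gamma=\cos\theta^K$ to the equation $\Gamma^2+\Delta^2+\chi^2(1-\Gamma^2-\Delta^2)=1$, observe that the solutions are $\Gamma^2+\Delta^2=1$ or $\chi=\pm 1$, and then invoke Proposition~\ref{Gamma is an invariant of an isoclinic 4 dimensional subspace 1} to absorb the $\chi=\pm1$ branch into $\Gamma^2+\Delta^2=1$; for the second statement both use that $\Gamma^2+\Delta^2=1$ forces $\tilde X_3\in L(X_3,X_4)$ (equivalently $L(X_3,X_4)=L(\tilde X_3,\tilde X_4)$) together with point~(\ref{invariance of $U^{IJ}(X_1)$ from the leading vector X di $U^{IJ}(X_1)$}) of Proposition~\ref{properties of subspaces of tupe $U^{IJ}$, $U^{IK}$, $U^{JK}$}. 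Your treatment is in fact slightly more complete, since you handle the $\cos\theta^K=0$ case explicitly (noting it forces orthogonality and hence $(\Gamma,\Delta)=(1,0)$), whereas the paper tacitly divides by $\cos^2\theta^K$.
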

\begin{proof}
The solutions of  $(\Gamma^2 + \Delta^2 + \chi^2(1-\Gamma^2-\Delta^2))=1$ are $\Gamma^2 + \Delta^2=1$ and $\chi=\pm 1$. But in the second case, from the  Proposition (\ref{Gamma is an invariant of an isoclinic 4 dimensional subspace 1}), it is  $\Gamma=1$. The second statement follows from  the property (\ref{invariance of $U^{IJ}(X_1)$ from the leading vector X di $U^{IJ}(X_1)$}) of the Proposition (\ref{properties of subspaces of tupe $U^{IJ}$, $U^{IK}$, $U^{JK}$})  and  from the hypothesis that $\tilde X_3 \in L(X_3,X_4)$.
\end{proof}



\subsection{Orthogonal decomposition of  $U \in \mathcal{IC}^{2m}$  ($m\geq 4$) into isoclinic addends.} 

In this section we show that any $U \in \mathcal{IC}^{2m}$ admits an orthogonal decomposition into isoclinic addends $U_i$. Although such decomposition is in general not unique, the addends have all the same dimension and, for any $A \in S(\mathcal{Q})$, the  angle of isoclinicity of all pair $(U_i,AU_i)$  is the same as the one of the pair $(U,AU)$. Fixed an admissible basis $(I,J,K)$ and  being $(\theta^I,\theta^J,\theta^K)$ the respective angles of isoclinicity,  the addends are  characterized by the same values of a pair of invariants $(\Gamma,\Delta)$. It will turn out that  the angles $(\theta^I,\theta^J,\theta^K)$  together with  $(\xi,\chi,\eta, \Delta)$  determines the $Sp(n)$-orbit of $U$. We first prove the following
\begin{teor} \label{any X determines an 8 dimensional addend of an isoclinic subspace}
Let $U \in \mathcal{IC}^{2m}, \; m>4$   and, for any $A \in S(\mathcal{Q})$, denote by $\theta^A$ the angle of isoclinicity of the pair $(U,AU)$.
For any unitary $X_1 \in U $ there exists  an 8-dimensional subspace   $U^8 \in \mathcal{IC}^8$ containing $X_1$ such that,  for any $A \in S(\mathcal{Q})$, the pair  $(U^8,AU^8)$ has angle  $\theta^A$.
The subspace $U^8$ is unique unless  it is furtherly decomposable into isoclinic subspaces with same angle $\theta^A$  for any $A \in S(\mathcal{Q})$.
\end{teor}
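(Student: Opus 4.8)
The plan is to trade the chain bookkeeping for a single algebraic object attached to $U$: the normalized skew operators of the three Kähler forms. Assume first that $U$ is not orthogonal, and set $\mathcal J_I:=\frac{1}{\cos\theta^I}I^{-1}Pr^{IU}|_U$ and likewise $\mathcal J_J,\mathcal J_K$, so that by (\ref{$X_2,Y_2,Z_2$}) one has $\mathcal J_I X_1=X_2$, $\mathcal J_J X_1=Y_2$, $\mathcal J_K X_1=Z_2$ for every unit $X_1\in U$. A short computation giving $\langle X_2,Y\rangle=\frac{1}{\cos\theta^I}\omega^I(X_1,Y)$ shows each $\mathcal J_A$ is skew-symmetric, and the isoclinicity identity $(\omega^A)^{t}\omega^A=\cos^2\theta^A\,\mathrm{Id}$ forces $\mathcal J_A^2=-\mathrm{Id}$; thus $\mathcal J_I,\mathcal J_J,\mathcal J_K$ are genuine complex structures on $U$. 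Moreover, since $\langle \mathcal J_I X,\mathcal J_J X\rangle=\langle X_2,Y_2\rangle=\xi$ for every unit $X$ by the invariance of $(\xi,\chi,\eta)$ (Proposition \ref{invariance of xi,chi,eta of U in {IC}}), comparing symmetric parts yields the anticommutation relations $\mathcal J_I\mathcal J_J+\mathcal J_J\mathcal J_I=-2\xi\,\mathrm{Id}$, and the analogous $-2\chi\,\mathrm{Id}$, $-2\eta\,\mathrm{Id}$ for the pairs $(I,K)$ and $(J,K)$.

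These relations say exactly that $\mathcal J_I,\mathcal J_J,\mathcal J_K$ generate a Clifford algebra $\mathcal A\subseteq\mathrm{End}(U)$ for the negative of the Gram form of $(X_2,Y_2,Z_2)$. Consequently every element of $\mathcal A$ is a real combination of the eight monomials $\mathrm{Id},\mathcal J_I,\mathcal J_J,\mathcal J_K,\mathcal J_I\mathcal J_J,\mathcal J_I\mathcal J_K,\mathcal J_J\mathcal J_K,\mathcal J_I\mathcal J_J\mathcal J_K$, so $\dim\mathcal A\le 8$. I would then define $U^8:=\mathcal A\,X_1$, the cyclic module, namely the span of $X_1$ and its images under those eight monomials; hence $\dim U^8\le 8$ and $X_1\in U^8$. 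By construction $U^8$ is invariant under $\mathcal J_I,\mathcal J_J,\mathcal J_K$. Invariance under $\mathcal J_I$ means the operator of $\omega^I|_{U^8}$ is the restriction of $\cos\theta^I\,\mathcal J_I$, whose square is $-\cos^2\theta^I\,\mathrm{Id}$, so all principal angles of $(U^8,IU^8)$ equal $\theta^I$, and similarly for $J$ and $K$. Finally, because every pairwise anticommutator of the $\mathcal J_A$ is scalar, the operator $\alpha_1\mathcal J_I+\alpha_2\mathcal J_J+\alpha_3\mathcal J_K$ attached to $A=\alpha_1I+\alpha_2J+\alpha_3K$ again squares to a scalar multiple of $\mathrm{Id}$; by the computation of Proposition \ref{isoclinicity w.r.t. one hypercomplex basis implies isoclinicity w.r.t. any compatible complex structure} the pair $(U^8,AU^8)$ is then isoclinic with angle $\theta^A$. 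Thus $U^8\in\mathcal{IC}$ with the prescribed angles.

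It remains to determine $\dim U^8$ and the uniqueness clause. Since $U^8$ carries the complex structure $\mathcal J_I$, its dimension is even, and the representation theory of the Clifford algebra of a rank-$\le 3$ form confines it to $\{2,4,8\}$. When $(X_2,Y_2,Z_2)$ are independent — the generic, non-degenerate case, which I would identify with $\Gamma^2+\Delta^2<1$ through Proposition \ref{values of gamma and delta} — the central element $\mathcal J_I\mathcal J_J\mathcal J_K$ squares to a positive multiple of $\mathrm{Id}$ and splits $U$ into two eigenspaces that $\mathcal A X_1$ both meet, so $\dim U^8=8$; degeneracies in $(\xi,\chi,\eta)$ lower this to $4$ or $2$, which are precisely the $\mathcal{IC}^4$- and $\mathcal{IC}^2$-decomposable situations. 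When $\dim U^8=8$ this cyclic module is the unique $8$-dimensional isoclinic subspace with these angles through $X_1$; when it is smaller, $U$ is further decomposable and one builds an $8$-dimensional $U^8\supseteq\mathcal A X_1$ by adjoining orthogonal cyclic modules taken from $(\mathcal A X_1)^\perp\cap U$ — itself $\mathcal J_A$-invariant, because each $\mathcal J_A$ is skew — the freedom in this choice being exactly the asserted non-uniqueness. The orthogonal cases (some $\cos\theta^A=0$, where $\mathcal J_A$ is undefined) I would treat by running the argument with the surviving complex structures and invoking Proposition \ref{orthogonal subspaces belong to mathcal{IC} 4 with angles theta I,theta J,theta K}.

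The delicate point, and where I expect the real work to lie, is the exact determination $\dim U^8\in\{2,4,8\}$ together with the decomposability dichotomy: the bound $\dim U^8\le 8$ and the isoclinicity come almost for free from the Clifford relations, but showing that the generic value is exactly $8$ — equivalently that $\mathcal J_I\mathcal J_J\mathcal J_K$ is a genuine involution with both eigenspaces present precisely when $\Gamma^2+\Delta^2<1$ — requires relating the eigenspace splitting to the invariant $\Gamma^2+\Delta^2$ of Proposition \ref{values of gamma and delta}, and then carefully matching this classification with the chain construction so that the resulting canonical matrices $C_{IJ},C_{IK}$ agree with those forced by Theorem \ref{main_theorem 1_ existance of canonical bases with same mutual position}.
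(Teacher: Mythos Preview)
Your Clifford-algebra reformulation is genuinely different from the paper's route and is largely sound: the operators $\mathcal J_A$ are indeed skew complex structures on $U$, the anticommutation relations $\mathcal J_I\mathcal J_J+\mathcal J_J\mathcal J_I=-2\xi\,\mathrm{Id}$ (etc.) follow from the invariance of $(\xi,\chi,\eta)$, and the cyclic module $\mathcal A X_1$ is automatically $\mathcal J_I,\mathcal J_J,\mathcal J_K$-invariant, hence isoclinic with the correct angles and of dimension $\le 8$. Your uniqueness argument is also right: any $W\in\mathcal{IC}$ with $X_1\in W\subset U$ and the same angles satisfies $Pr^{IU}|_W=Pr^{IW}$ (comparing norms), so $W$ is $\mathcal A$-invariant and contains $\mathcal A X_1$. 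By contrast the paper builds $U^8$ geometrically, first forming the associated $4$-spaces $U^{IJ}(X_1)$, $U^{IK}(X_1)$ via chains, then locating the second $4$-block by a CS decomposition of the pair $\bigl((U^{IJ})^\perp,(U^{IK})^\perp\bigr)$ inside $U$; this is heavier but delivers the explicit canonical matrices $C_{IJ},C_{IK}$ along the way.

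There is, however, a concrete error in your dimension analysis. You identify ``$(X_2,Y_2,Z_2)$ linearly independent'' with ``$\Gamma^2+\Delta^2<1$'' and with ``$\dim\mathcal A X_1=8$''. These are not the same. Take $U$ a direct sum of quaternionic lines (so $\dim U=4k$, $k\ge 3$): here $\xi=\chi=\eta=0$, the Gram form of $(X_2,Y_2,Z_2)$ is the identity (rank~$3$), yet $\mathcal J_I\mathcal J_J=-\mathcal J_K$ on $U$, so the central element $\mathcal J_I\mathcal J_J\mathcal J_K=+\mathrm{Id}$ has only one eigenspace and every cyclic module is $4$-dimensional; correspondingly $(\Gamma,\Delta)=(0,-1)$ and $\Gamma^2+\Delta^2=1$. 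The correct dichotomy is
\[
\dim\mathcal A X_1=4 \iff Z_2\in U^{IJ}(X_1)=\mathrm{span}(X_1,\mathcal J_I X_1,\mathcal J_J X_1,\mathcal J_I\mathcal J_J X_1) \iff \Gamma^2+\Delta^2=1,
\]
which is Corollary~\ref{cases when gamma= cos theta K}, and this is strictly weaker than dependence of $(X_2,Y_2,Z_2)$. In Clifford terms, $\dim\mathcal A X_1=8$ holds iff the volume element (after orthonormalizing the generators) acts with \emph{both} eigenvalues on $U$, and this cannot be read off from the rank of the Gram form alone. So your plan survives, but the ``generic $\Rightarrow$ $8$'' step must be replaced by the criterion $\Gamma^2+\Delta^2<1$, established via the chain identities of Proposition~\ref{values of gamma and delta} rather than via independence of $(X_2,Y_2,Z_2)$.
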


\begin{proof}


In case $U$ is a non orthogonal 2-planes decomposable subspace  
any $\omega^I$-standard 2-plane     is isoclinic with angles $(\theta^I,\theta^J,\theta^K)$.
If $U$ is double orthogonal (resp. triple orthogonal),
any standard 2-plane associated to the non null cosine (resp. any 2-plane) is a standard 2-plane for $\omega^I,\omega^J,\omega^K$.
 Then $U^8$ is obtained by summing up  four mutually orthogonal  such standard 2-planes (the first containing $X_1$).  Clearly $U^8$ is not uniquely determined.
Observe that the orthogonal sum of a pair of such 2-plane is a 4-dimensional isoclinic subspace  with angles $(\theta^I,\theta^J,\theta^K)$ characterized then by $\Gamma=1$ and $\Delta=0$.

Consider now the case that $U$ is not a 2-planes decomposable subspace.
Let $U^{IJ}(X_1)=L(X_1,X_2,X_3,X_4)$ and $U^{IK}(X_1)=L(X_1,X_2,\tilde X_3,\tilde X_4)$ be the pair of  (unique) associated subspaces centered on $X_1$. Let $\Gamma$ and $\Delta$ as in the Proposition (\ref{values of gamma and delta}) and expression   (\ref{equivalent exprssions of Delta}). From the  Proposition (\ref{All 4 dimensional subspaces of type $U^{IJ}$  have the same value of Gamma e Delta}) the pair $(\Gamma,\Delta)$ is an invariant of all 4-dimensional subspaces of type $U^{IJ}$ of $U$. 

In the sequel, for simplicity we will not indicate the leading vector and denote by $(U^{IJ})^\perp$ (resp. $(U^{IK})^\perp$)  the subspace $(U^{IJ}(X_1))^\perp \cap U$ (resp.  the subspace $(U^{IK}(X_1))^\perp \cap U$).
From the Proposition (\ref{properties of subspaces of tupe $U^{IJ}$, $U^{IK}$, $U^{JK}$}), the subspace  $(U^{IJ})^\perp$ (resp. $(U^{IK})^\perp$) is of  type $U^{IJ}$ (resp. of  type $U^{IK}$).

 The subspaces   $(U^{IJ})^\perp $ and $(U^{IK})^\perp $ are in general different unless
 $\Gamma^2 + \Delta^2=1$.


As stated in Corollary (\ref{cases when gamma= cos theta K}), in  case  $\Gamma^2 + \Delta^2 =1$  the subspaces $U^{IJ}=U^{IK} \in \mathcal{IC}^4$ with angles $(\theta^I,\theta^J,\theta^K)$ w.r.t. $(I,J,K)$.
In this case, 
considering  any unitary vector $X_5 \in (U^{IJ})^\perp$, built the  $\omega^I$-chains  $\{X_i\},\{\tilde X_i\}$  centered on $X_5$ and being  $U^{IJ}(X_5)$ and $U^{IK}(X_5)$  the associated subspaces of type $U^{IJ}$ and $U^{IK}$,  one has that $U^{IJ}(X_5)=U^{IK}(X_5) \in \mathcal{IC}^4$ with angles $(\theta^I,\theta^J,\theta^K)$ w.r.t. $(I,J,K)$. Furthermore from point (\ref{orthogonal complement of a type $U^{IJ}$ subspace is of type $U^{IJ}$}) of the Proposition (\ref{properties of subspaces of tupe $U^{IJ}$, $U^{IK}$, $U^{JK}$}) one has that $U^{IJ}(X_1) \perp U^{IJ}(X_5)$.

The 8-dimensional subspace  $U_1^8=U^{IJ}(X_1) \stackrel{\perp}  \oplus U^{IJ}(X_5)= U^{IK}(X_1) \stackrel{\perp}  \oplus U^{IK}(X_5)$ is isoclinic with $IU^8,JU^8,KU^8$ with  angles  $(\theta^I,\theta^J,\theta^K)$. Because of the arbitrariness in  the choice of $X_5$, also in this case $U^8$ is not uniquely defined.


Let then consider the case that $\Gamma^2 + \Delta^2 \neq 1$.   From the Proposition (\ref{propertiers of CS decomposition}),  the cosines of the pair of  non null  principal angles  of the pair $((U^{IJ})^\perp, (U^{IK})^\perp) $ are both  equal to $\sqrt{\Gamma^2 + \Delta^2}$.    Let $U_1 \subset (U^{IJ})^\perp$ and $U_2 \subset (U^{IK})^\perp$ be the  pair of isoclinic 2-planes generated by the related principal vectors associated to  such non null principal angles. One has
\[
(U^{IJ})^\perp = U_1  \stackrel{\perp}{\oplus} W; \qquad \qquad  (U^{IK})^\perp = U_2  \stackrel{\perp} {\oplus} W
\]
being $W$ the $(2m-6)$-dimension intersection $(U^{IJ})^\perp   \cap (U^{IK})^\perp$.
\begin{lemma}
The subspaces $U_1$ and $U_2$ are  $\omega^I$-standard.
\end{lemma}
\begin{proof}
Both  $(U^{IJ})^\perp $ and $(U^{IK})^\perp $ are isoclinic with  $I(U^{IJ})^\perp $ and $I(U^{IK})^\perp$ with angle $\theta^I$. Then for any $X_1 \in (U^{IJ})^\perp$ the vector  $X_2= \frac{I^{-1}P^{IU}X_1}{\cos \theta^I}$ belongs to $(U^{IJ})^\perp$ and analogously  for any $\bar {X_1} \in (U^{IK})^\perp $ the vector   $\bar X_2=\frac{I^{-1}P^{IU} \bar X_1}{\cos \theta^I}$  is in   $(U^{IK})^\perp  $. Consequently any $X_1 \in (U^{IJ})^\perp \cap (U^{IK})^\perp=W$ has the corresponding in $W$ which implies that  both $U_1$ and $U_2$ are $\omega^I$-standard 2-planes.
\end{proof}
It follows that
\[
Pr^{IU} U_1= IU_1, \qquad Pr^{IU} U_2= IU_2
\]
and $Pr^{IU}W=IW$.

 Consider an unitary vector  $X_7 \in U_1$ and  the $\omega^I$-chain $(X_7,X_8,X_5,X_6)$ generating the subspace   $U^{IJ}(X_7)= L(X_7,X_8) \stackrel{\perp}{\oplus} L(X_5,X_6)$. From previous Lemma $U_1= L(X_7,X_8)$. From the Proposition (\ref{properties of subspaces of tupe $U^{IJ}$, $U^{IK}$, $U^{JK}$}) point 4), $U^{IJ}(X_7) \subset U_1^{IJ}(X_1)^\perp$.
The $\omega^I$-standard 2-plane $L(X_5,X_6) \subset W$.
In fact, from previous Lemma, $U^{IJ}(X_7)= L(X_7,X_8) \stackrel{\perp}{\oplus} L(X_5,X_6)\subset {U_1^{IJ}}^\perp= L(X_7,X_8) \stackrel{\perp}{\oplus} W$ and, being  $L(X_5,X_6) \perp L(X_7,X_8)$, the conclusion follows.

Let consider now the  vector $X_5$ as new leading vector. From the Proposition (\ref{properties of subspaces of tupe $U^{IJ}$, $U^{IK}$, $U^{JK}$}), one has that $U^{IJ}(X_5)= U^{IJ}(X_7)$. In fact the $\omega^I$-chain is now  $(X_5,X_6, -X_7,-X_8)$  and  $U^{IJ}(X_5)= L(X_5,X_6) \stackrel{\perp}{\oplus}  L(X_7,X_8)$. Let  $U^{IK}(X_5)= L(X_5,X_6) \stackrel{\perp}{\oplus}  L(-\tilde X_7,-\tilde X_8)$ the associated subspace of type $U^{IK}$ where the generators form the $\omega^I$-chain centered on $X_5$. From point 4) of the Proposition (\ref{properties of subspaces of tupe $U^{IJ}$, $U^{IK}$, $U^{JK}$}),
the chain $U^{IK}(X_5) \subset (U^{IK})^\perp=U_2 \stackrel{\perp}{\oplus} W$.
\begin{prop}
One has
\[
L(\tilde X_7,\tilde X_8)=U_2
\]
\end{prop}

\begin{proof}
By assumption, the orthogonal projection of $L(X_7,X_8)$ onto 
  $(U^{IK})^\perp=U_2 \oplus W$ 
   is onto $U_2$ 
    with  the cosine of the principal angles  equal to $\sqrt{\Gamma^2 + \Delta^2}$. From the Proposition (\ref{All 4 dimensional subspaces of type $U^{IJ}$  have the same value of Gamma e Delta}) as 2-planes of the associates chains $(X_5,X_6,-X_7,-X_8)$  and $(X_5,X_6,-\tilde X_7,-\tilde X_8)$ the subspaces   $L(X_7,X_8)$ and  $L(\tilde X_7,\tilde X_8)$ are isoclinic with the cosine of the principal angles  equal to $\sqrt{\Gamma^2 + \Delta^2}$. Then $L(\tilde X_7, \tilde X_8)=U_2$.
\end{proof}

Therefore $L(X_3,X_4, X_7,X_8)=L(\tilde X_3,\tilde X_4,\tilde X_7,\tilde X_8)$ and
the 8-dimensional subspace
 \[U_1^8=L(X_1,X_2,X_3,X_4,X_5,X_6,X_7,X_8)=L(X_1,X_2,\tilde X_3,\tilde X_4,X_5,X_6,\tilde X_7,\tilde X_8)\]
 is orthogonal sum of a  pair of uniquely defined subspaces of type $U^{IJ}$ that is $U_1^8=U^{IJ}(X_1) \oplus U^{IJ}(X_5)$ and  orthogonal sum of the pair of the (uniquely defined) relative associated subspaces of type $U^{IK}$ that is  $U_1^8=U^{IK}(X_1) \oplus U^{IK}(X_5)$.
This implies that $U_1^8$ is isoclinic with $IU_1^8,JU_1^8,KU_1^8$ with angles $(\theta^I,\theta^J,\theta^K)$. By the uniqueness of the addends it follows the uniqueness of $U^8$.
To conclude the proof we need the following
\begin{lemma} \label{the 8-dimensional addend is isoclinic}
Let $U \in \mathcal{IC}^{2m}$, $(I,J,K)$ be some admissible basis and $(\theta^I,\theta^J,\theta^K)$ be the angles of isoclinicity of the pairs $(U,IU), (U,JU),(U,KU)$ respectively. Any $k$-dimensional subspace $W \subset U$  isoclinic with $IW,JW,KW$ with  angles  equal respectively to $(\theta^I,\theta^J,\theta^K)$   belongs to $\mathcal{IC}^{k}$ and, for any $A \in S(\mathcal{Q})$,   the angle of isoclinicity of the pair   $(W,AW)$ equals the one of the pair $(U,AU)$. 
\end{lemma}
\begin{proof}
 Let $A= \alpha I + \beta J + \gamma K$ be a compatible complex structure and let $\theta^A$  be the angle of isoclinicity of the pair $(U,AU)$. Consider the subspace  $AW=\{AX=\alpha IX + \beta JX + \gamma KX, \, X \in W \}$. If $\theta^A = \pi/2$ 
 the pair $(W,AW)$ is isoclinic with angle $\pi/2$. Suppose then  that  $\theta^A \neq \pi/2$. 
 We prove that $Pr^{AU}W= AW$ which implies that the pair  $(W,AW)$ is isoclinic  with angle of isoclinicity equal to $\theta^A$. Projecting the generic vector
    $AW \ni \tilde Y= \alpha I \tilde X + \beta J \tilde X + \gamma K \tilde X$ with $\tilde X \in W$ onto $U$ one has
    \[
    Pr^U \tilde Y= Pr^U  (\alpha I \tilde X + \beta J \tilde X + \gamma K \tilde X) = \alpha Pr^U I \tilde X  + \beta Pr^U J \tilde X + \gamma Pr^U K \tilde X.
    \]

    By hypothesis the pairs $(W,IW),(W,JW),(W,KW)$ are isoclinic with angles $\theta^I,\theta^J,\theta^K$ respectively  which implies 
    that $Pr^{IU}W=IW$ and $Pr^{U}IW=W$, $Pr^{JU}W=JW$  and $Pr^{U}JW=W$, $Pr^{KU}W=KW $and  $Pr^{U}KW=W$. Therefore $Pr^U \tilde Y \in W$. This implies that the pair $(W,AW)$ is isoclinic with  the angle of isoclinicity equal to $\theta^A$.
\end{proof}
\end{proof}

\begin{lemma}
For any $X \in U_1^8$  the associated subspaces  $U^{IJ}(X)$ and $U^{IK}(X)$ are both in $U_1^8$.
\end{lemma}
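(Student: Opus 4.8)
The plan is to show that every vector produced by the chain construction of Definition (\ref{associated chains}) starting from $X$ already lies in $U_1^8$. The decisive ingredient is the fact, established in Theorem (\ref{any X determines an 8 dimensional addend of an isoclinic subspace}) together with Lemma (\ref{the 8-dimensional addend is isoclinic}), that $U_1^8 \in \mathcal{IC}^8$ with the same angles $(\theta^I,\theta^J,\theta^K)$ as $U$; hence the pairs $(U_1^8,IU_1^8)$, $(U_1^8,JU_1^8)$, $(U_1^8,KU_1^8)$ are isoclinic and, exactly as in the proof of that Lemma, one has
\[
Pr^{IU}(U_1^8)=IU_1^8,\qquad Pr^{JU}(U_1^8)=JU_1^8,\qquad Pr^{KU}(U_1^8)=KU_1^8.
\]
These three identities are the engine of the whole argument.

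First I would fix a unit vector $X=X_1\in U_1^8$ (the subspaces $U^{IJ}(X),U^{IK}(X)$ depend only on the direction of $X$, so this is no loss). Applying $Pr^{IU},Pr^{JU},Pr^{KU}$ to $X_1$ and using the identities above, the vectors
\[
X_2=\frac{I^{-1}Pr^{IU}X_1}{\cos\theta^I},\qquad Y_2=\frac{J^{-1}Pr^{JU}X_1}{\cos\theta^J},\qquad Z_2=\frac{K^{-1}Pr^{KU}X_1}{\cos\theta^K}
\]
all lie in $U_1^8$: indeed $Pr^{IU}X_1\in IU_1^8$ forces $I^{-1}Pr^{IU}X_1\in U_1^8$, and likewise for $J$ and $K$. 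Consequently the auxiliary vectors $X_4,Y_4,\tilde X_4,Z_4,\tilde Y_4,\tilde Z_4$ of Definition (\ref{associated chains}), being linear combinations of $X_2,Y_2,Z_2$, also belong to $U_1^8$.

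The remaining chain vectors $X_3=Y_3$, $\tilde X_3=Z_3$, $\tilde Y_3=\tilde Z_3$ are obtained by one further application of $I^{-1}Pr^{IU}$, $J^{-1}Pr^{JU}$, $K^{-1}Pr^{KU}$ to $X_4,\tilde X_4,\tilde Y_4$; since the latter are already in $U_1^8$, the same projection identities give $X_3,\tilde X_3,\tilde Y_3\in U_1^8$. Therefore $U^{IJ}(X)=L(X_1,X_2,X_3,X_4)\subset U_1^8$ and $U^{IK}(X)=L(X_1,X_2,\tilde X_3,\tilde X_4)\subset U_1^8$, as claimed. In the orthogonal or $2$-planes decomposable cases the same conclusion follows from the conventions of Claim (\ref{principal vectors asociated to $X_1$ in case some of the pairs (U,IU), (U,JU),(U,KU) are strictly orthogonal}) together with the fact that $U_1^8$ is then an orthogonal sum of standard $2$-planes, so that each constructed vector is again a linear combination of vectors already in $U_1^8$.

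The main obstacle is the subtle point hidden in the first step: a priori $Pr^{IU}X_1$ is the orthogonal projection onto the \emph{large} image $IU$, not onto $IU_1^8$, and one must be certain these agree on $U_1^8$. This is precisely what the isoclinicity of $U_1^8$ with its images guarantees, since it forces $Pr^{IU}(U_1^8)=IU_1^8$ (equivalently $X_1\perp I\bigl((U_1^8)^\perp\cap U\bigr)$). Once this identification is secured, the remainder is a bookkeeping verification that every vector in the chains is produced from $X_1$ by repeated application of the operators $I^{-1}Pr^{IU}$, $J^{-1}Pr^{JU}$, $K^{-1}Pr^{KU}$ and by taking linear combinations, all of which preserve $U_1^8$.
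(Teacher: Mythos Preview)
Your proof is correct and follows essentially the same approach as the paper's: both argue that $X_2,Y_2,Z_2$ lie in $U_1^8$, hence so do the linear combinations $X_4,\tilde X_4,\ldots$ and then the remaining chain vectors $X_3,\tilde X_3,\ldots$. The paper's proof is terser and simply asserts that $X_2,Y_2,Z_2\in U_1^8$, whereas you make explicit the reason, namely the projection identities $Pr^{IU}(U_1^8)=IU_1^8$ etc., which is exactly the content already used in Lemma~(\ref{the 8-dimensional addend is isoclinic}); your added remark about the distinction between $Pr^{IU}$ and $Pr^{IU_1^8}$ is a useful clarification but not a different argument.
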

\begin{proof}
For any $X_1 \in U_1^8$ one has that $X_2,Y_2,Z_2$ completing the standard 2-planes of $\omega^I,\omega^J,\omega^K$ respectively are in $U_1^8$ (by assumption in case of triple orthogonality)   and so are the 2-plane $L(X_2,Y_2),L(X_2,Z_2),L(Y_2,Z_2)$ and consequently $X_4,Y_4,Z_4$ and the whole chains $\{X_i\},\{Y_i\},\{Z_i\}$.
\end{proof}


Let now consider the 4-dimensional $\omega^I$-standard subspace $L(X_3,X_4, X_7,X_8)=L(\tilde X_3,\tilde X_4,\tilde X_7,\tilde X_8)$.
The Gram matrix w.r.t. the bases  $(X_3,X_4,-X_7,-X_8)$ and $(\tilde X_3,\tilde X_4,-\tilde X_7,-\tilde X_8)$ has necessarily the form
\begin{equation} \label{matrix of a 4 plane w.r.t. the associated chains}
A=\left(
\begin{array} {cc|cc}
 \Gamma & -\Delta &  a & b   \\
 \Delta & \Gamma & b & -a \\
\hline
-a & -b & \Gamma & -\Delta \\
 -b & a & \Delta & \Gamma
\end{array} \right).
\end{equation}

From  the Proposition (\ref{properties of subspaces of tupe $U^{IJ}$, $U^{IK}$, $U^{JK}$}) point (\ref{1:1 correspondence between standard subspaces of $U^{IJ}(X_1)$ and associated subspaces of type $U^{IK}$} ) we know that, by changing leading vector  with  $ X \in L(X_5,X_6)$, one has  that $U^{IJ}(X)=U^{IJ}(X_5)$ and $U^{IK}(X)=U^{IK}(X_5)$. We then consider a new leading vector $ \bar X_5 \in L(X_5,X_6)$ and let $(\bar X_5,\bar X_6, \bar X_7, \bar X_8)$ and $(\bar X_5,\bar  X_6,\tilde{\bar{X_7}},\tilde{\bar{X_8}})$ be the new chains centered on $\bar X_5$ determined in order that the pair $(X_3, \tilde{\bar {X_7}})$ are related principal vectors of the pair $(L(X_3,X_4), L(\tilde X_7, \tilde X_8))$.
The  change of basis in $L(X_5,X_6)$  to diagonalize the minors $A_{12,34}$  and $A_{34,12}$ is given by the orthogonal transformation $(\bar X_5,\bar X_6)= C(X_5,X_6)$  represented by the matrix
\[
C:
\frac{1}{\sqrt{a^2+ b^2}}
\left(
\begin{array} {ll}
a & -b\\
b & a
\end{array}
\right)
\]
which induce  the same transformation onto $L(X_7,X_8)$ and $L(\tilde X_7,\tilde X_8)$.

The Gram matrix w.r.t. the bases $(X_3,X_4, \bar X_7, \bar X_8)$ and $(\tilde{X_3},\tilde{X_4},\bar{\tilde  {X_7}}, \bar{\tilde{X_8}})$ is the following
\[
\left(
\begin{array} {cc|cc}
 \Gamma & -\Delta &  +\sqrt{1- \Gamma^2 - \Delta^2} & 0   \\
 \Delta & \Gamma & 0 & -\sqrt{1- \Gamma^2 - \Delta^2}  \\
\hline
 -\sqrt{1- \Gamma^2 - \Delta^2} & 0 & \Gamma & -\Delta \\
  0 &  +\sqrt{1- \Gamma^2 - \Delta^2} & \Delta & \Gamma
\end{array} \right).
\]

 By construction the  vector $ \tilde{\bar{X_7}}$  is  the principal vector  associated to $X_3$ of the pair of isoclinic subspaces $(L(X_3,X_4),L(\tilde X_7,\tilde X_8))$ and in fact $\cos \widehat{X_3,\tilde{\bar{X_7}}} \geq 0$  whereas,  being  $\cos \widehat{X_4,\tilde{\bar{X_8}}} \leq 0$  the pair $(X_4,\tilde{\bar{X_8}})$  is not formed by  related principal vectors.

Let $(\bar X_5, \bar X_6,\bar X_7,\bar X_8)$  and $(\bar X_5, \bar Y_6,\bar X_7,\bar Y_8)$ the $\omega^I$ and $\omega^J$ chains of $U^{IJ}(\bar X_5)$ and $(\bar X_5, \bar X_6, \tilde{\bar {X_7}},\tilde{\bar {X_8}})$  and $(\bar X_5, \bar Z_6, \tilde{\bar {X_7}},\bar Z_8)$ the  $\omega^I$ and $\omega^K$ chains of $U^{IK}( \bar X_5)$.

W.r.t. the bases $(X_1,X_2,X_3, X_4,\bar X_5,\bar X_6,\bar X_7,\bar X_8)$ and $(X_1, Z_2,\tilde X_3,Z_4, \bar X_5,\bar Z_6,\tilde{\bar{X_7}},\bar Z_8)$, and denoting by $\Sigma=\sqrt{1- \Gamma^2 - \Delta^2}$  the matrix $C_{IK}$ is given by
\begin{equation} \label{$C_{IK}$ block of an 8 dimensional isoclinic addend}
C_{IK}=\left(
\begin{array} {cccc|cccc}
1 & 0 & 0 & 0 & 0 & 0 & 0 & 0 \\
0 & \chi & 0 & - \sqrt{1- \chi^2} & 0 & 0 & 0 & 0 \\
0 & -\Delta\sqrt{1- \chi^2} & \Gamma & -\Delta  \chi & 0 & 0 &  \Sigma  &  0  \\
0 & \Gamma \sqrt{1- \chi^2} & \Delta  & \Gamma \chi & 0 &  \Sigma  \sqrt{1- \chi^2} & 0 & - \Sigma \chi\\
\hline
0 & 0 & 0 & 0 & 1 & 0 & 0 & 0 \\
0 & 0 &  0 & 0 & 0 & \chi & 0 & - \sqrt{1- \chi^2} \\
0 & 0 &  -\Sigma  &  0 &  0 & -\Delta \sqrt{1- \chi^2} & \Gamma & -\Delta  \chi \\
0 &  \Sigma \sqrt{1- \chi^2}  & 0 &  \Sigma  \chi & 0 & \Gamma \sqrt{1- \chi^2} & \Delta  & \Gamma \chi
\end{array} \right).
\end{equation}

W.r.t. the $\omega^I$-standard basis  $(X_1,X_2,X_3, X_4,\bar X_5,\bar X_6,\bar X_7,\bar X_8)$ and  the  $\omega^J$-standard basis  $(\bar X_1,Y_2,X_3=Y_3, Y_4,X_5,Y_6, X_7,Y_8)$ the matrix $C_{IJ}$ is given by
 \begin{equation}   \label{$C_{IJ}$ block of an 8 dimensional isoclinic addend}
C_{IJ}=\left(
\begin{array} {cccc|cccc}
1 & 0 & 0 & 0 & 0 & 0 & 0 & 0\\
0 & \xi & 0 & -\sqrt{1-\xi^2} & 0 & 0 & 0 & 0\\
0 & 0 & 1 & 0 & 0 & 0 & 0 & 0\\
0 & \sqrt{1-\xi^2} & 0 & \xi & 0 & 0 & 0 & 0\\
\hline
0 & 0 & 0 & 0 & 1 & 0 & 0 & 0\\
0 & 0 & 0 & 0 & 0 & \xi & 0 & -\sqrt{1-\xi^2}\\
0 & 0 & 0 & 0 & 0 & 0 & 1 & 0 \\
0 & 0 & 0 & 0 & 0 & \sqrt{1-\xi^2} & 0 & \xi
\end{array}
\right).
\end{equation}

The aforementioned $\omega^I$ (resp. $\omega^K$)-standard bases are union of a pair of $\omega^I$ (resp. $\omega^K$)-chains of $U_1^8$  generating  a pair of orthogonal subspaces of type $U^{IJ}$  (resp. $U^{IK}$).
 The independence of $(\xi,\chi,\eta,\Gamma,\Delta)$ from the leading vector,   accounts for the independence of both matrices from the  leading vector $X_1 \in U_1^8$. We call  \textit{canonical bases} of $U_1^8$ the set of such bases determined by the leading vector $X_1 \in U_1^8$ and \textit{canonical matrices $C_{IJ}$ and $C_{IK}$} the  matrices given in (\ref{$C_{IJ}$ block of an 8 dimensional isoclinic addend}) and  (\ref{$C_{IK}$ block of an 8 dimensional isoclinic addend}) respectively..


We can continue this construction considering an unitary vector  $\tilde X_1 \in (U_1^8)^\perp$  which will determine a second 8-dimensional subspace $U_2^8$ orthogonal to the first and so on.
Then we can state the following
\begin{prop}
 Any $U \in \mathcal{IC}^{8k}$ admits an  orthogonal decomposition
    \[ U= U_1^8  \stackrel{\perp}{\oplus} U_2^8  \stackrel{\perp}{\oplus} U_3^8 \stackrel{\perp}{\oplus}  \ldots  \stackrel{\perp}{\oplus} U_k^8.\]
    where  all 8-dimensional addends are isoclinic i.e. $U_i \in \mathcal{IC}^{8}, \, i= 1, \ldots,k$ with same angles for all pairs $(U_i,AU_i)$ as $(U,AU)$.
   \end{prop}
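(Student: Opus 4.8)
The plan is to prove the decomposition by induction on $k$, peeling off one eight-dimensional isoclinic summand at a time and checking that what remains is again an isoclinic subspace of the same type, with dimension dropping by exactly $8$ at each stage. The base case $k=1$ is immediate, since then $U\in\mathcal{IC}^8$ is itself the single summand $U_1^8$. So I would fix $k\ge 2$, assume the statement for every isoclinic subspace of dimension $8(k-1)$, and take $U\in\mathcal{IC}^{8k}$ with fixed angles of isoclinicity $(\theta^I,\theta^J,\theta^K)$ relative to the admissible basis $(I,J,K)$.

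For the inductive step I would first choose any unitary $X_1\in U$ and invoke Theorem (\ref{any X determines an 8 dimensional addend of an isoclinic subspace}), which applies because $\dim U=8k>8$. This produces an eight-dimensional subspace $U_1^8\in\mathcal{IC}^8$ containing $X_1$ with the property that $(U_1^8,AU_1^8)$ has angle $\theta^A$ for every $A\in S(\mathcal{Q})$. In particular the pairs $(U_1^8,IU_1^8)$, $(U_1^8,JU_1^8)$, $(U_1^8,KU_1^8)$ are isoclinic with angles exactly $(\theta^I,\theta^J,\theta^K)$, so $U_1^8$ is simultaneously of type $U^{IJ}$ and of type $U^{IK}$ in the sense of Definition (\ref{subspaces of type $U^{IJ}, U^{IK}, U^{IJ}$}). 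The crux is then to show that the complement $V:=(U_1^8)^{\perp}\cap U$ lies in $\mathcal{IC}^{8(k-1)}$ with the same angles. Applying point (\ref{orthogonal complement of a type $U^{IJ}$ subspace is of type $U^{IJ}$}) of Proposition (\ref{properties of subspaces of tupe $U^{IJ}$, $U^{IK}$, $U^{JK}$}) to the type $U^{IJ}$ of $U_1^8$ shows $V$ is of type $U^{IJ}$, and applying it to the type $U^{IK}$ shows $V$ is of type $U^{IK}$; together these say that $(V,IV)$, $(V,JV)$, $(V,KV)$ are all isoclinic with angles $(\theta^I,\theta^J,\theta^K)$. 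Lemma (\ref{the 8-dimensional addend is isoclinic}) then upgrades this to $V\in\mathcal{IC}^{8(k-1)}$, with angle of isoclinicity $\theta^A$ for every $A\in S(\mathcal{Q})$. Since $\dim V=8(k-1)$ is still a multiple of $8$, the inductive hypothesis yields $V=U_2^8\stackrel{\perp}{\oplus}\cdots\stackrel{\perp}{\oplus}U_k^8$, and adjoining $U_1^8$ gives the asserted orthogonal decomposition into isoclinic $8$-dimensional addends all sharing the angles of $U$.

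The heavy lifting has already been done in Theorem (\ref{any X determines an 8 dimensional addend of an isoclinic subspace}); the remaining delicate point, which I would treat as the main obstacle, is the clean inheritance of isoclinicity by the orthogonal complement with respect to all three structures \emph{simultaneously}. Conceptually this rests on the fact that, for each $A\in\{I,J,K\}$, the skew-symmetric operator $B_A=\mathrm{Pr}^U\!\circ A|_U$ on $U$ (whose matrix is $\omega^A$ and which satisfies $B_A^2=-\cos^2\theta^A\,\mathrm{Id}$ by isoclinicity) leaves the standard summand $U_1^8$ invariant; being normal, it therefore leaves $V$ invariant as well, and the relation $B_A^2=-\cos^2\theta^A\,\mathrm{Id}$ restricts to $V$, which is precisely the statement that $(V,AV)$ is isoclinic with angle $\theta^A$. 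This is exactly the content packaged in point (\ref{orthogonal complement of a type $U^{IJ}$ subspace is of type $U^{IJ}$}) of Proposition (\ref{properties of subspaces of tupe $U^{IJ}$, $U^{IK}$, $U^{JK}$}) together with Lemma (\ref{the 8-dimensional addend is isoclinic}), so no new computation is needed; one only has to verify that the dimension bookkeeping keeps every intermediate complement a multiple of $8$, which it does, and to note that non-uniqueness of the summands (arising from the free choice of $X_1$ at each step) does not affect the existence claim.
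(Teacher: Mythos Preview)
Your proof is correct and follows essentially the same route as the paper, which simply states ``We can continue this construction considering an unitary vector $\tilde X_1 \in (U_1^8)^\perp$ which will determine a second 8-dimensional subspace $U_2^8$ orthogonal to the first and so on.'' You have made explicit the point the paper leaves implicit, namely that the orthogonal complement $(U_1^8)^\perp\cap U$ is again in $\mathcal{IC}^{8(k-1)}$ with the same angles, by invoking point~(\ref{orthogonal complement of a type $U^{IJ}$ subspace is of type $U^{IJ}$}) of Proposition~(\ref{properties of subspaces of tupe $U^{IJ}$, $U^{IK}$, $U^{JK}$}) and Lemma~(\ref{the 8-dimensional addend is isoclinic}); this is precisely what is needed to close the induction.
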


\begin{prop}
To any $U \in \mathcal{IC}^{8k}$  we can associate the values $\Gamma$ and $\Delta$ of all 4-dimensional subspaces of type $U^{IJ}$ belonging to  $U$.
\end{prop}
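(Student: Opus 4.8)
The plan is to exhibit $(\Gamma,\Delta)$ as a quantity attached to $U$ itself rather than to any particular $4$-dimensional piece, by combining an existence statement with the invariance already proved.

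First I would settle existence. Since $\dim U = 8k \ge 4$ whenever $k \ge 1$, for every unitary leading vector $X_1 \in U$ the $\omega^I$ and $\omega^J$ chains centered on $X_1$ (given by Definition (\ref{associated chains}), or by Definitions (\ref{The chains if at least one among the 3 invariants is one or minus one}) and (\ref{The chains if all the 3 invariants is one or minus one}) in the degenerate cases) generate a $4$-dimensional subspace $U^{IJ}(X_1)$ of type $U^{IJ}$, together with its associated subspace $U^{IK}(X_1)$ of type $U^{IK}$. Hence $4$-dimensional subspaces of type $U^{IJ}$ always exist inside $U$, and to each one the pair $(\Gamma,\Delta)$ is attached through Proposition (\ref{values of gamma and delta}) and the equivalent expressions (\ref{equivalent exprssions of Delta}), where $\Gamma=\Gamma(\xi,\chi,\eta)$ is given by (\ref{cos phi}) when none of $\xi,\chi,\eta$ equals $\pm 1$, and $\Gamma=1$, $\Delta=0$ otherwise, by Proposition (\ref{Gamma is an invariant of an isoclinic 4 dimensional subspace 1}).

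Next I would invoke the key invariance result. By Proposition (\ref{All 4 dimensional subspaces of type $U^{IJ}$  have the same value of Gamma e Delta}), all $4$-dimensional subspaces of $U$ of type $U^{IJ}$ --- independently of the leading vector that generates them --- carry one and the same pair $(\Gamma,\Delta)$, computed with respect to their associated subspaces of type $U^{IK}$. Consequently the value does not depend on the chosen $4$-dimensional piece, and we may unambiguously set the pair $(\Gamma,\Delta)$ of $U$ equal to this common value, for the fixed admissible basis $(I,J,K)$. Together with the already established invariance of the triple $(\xi,\chi,\eta)$ in Proposition (\ref{invariance of xi,chi,eta of U in {IC}}), this produces a genuine invariant of $U$.

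The only point requiring care, and hence the mild obstacle, is the degenerate situation in which $U$ is orthogonal or $2$-planes decomposable: there the subspace $U^{IJ}(X_1)$ need not be determined uniquely by $X_1$, since different choices of the complementary standard $2$-plane may produce different $4$-dimensional pieces. However, in each such case at least one of $\xi,\chi,\eta$ equals $\pm 1$, so by Proposition (\ref{Gamma is an invariant of an isoclinic 4 dimensional subspace 1}) every one of these pieces has $(\Gamma,\Delta)=(1,0)$ (compare also Corollary (\ref{cases when gamma= cos theta K})); thus well-definedness is preserved and the assignment of $(\Gamma,\Delta)$ to $U$ remains unambiguous across the whole class $\mathcal{IC}^{8k}$.
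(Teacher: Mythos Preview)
Your proposal is correct and mirrors the paper's own reasoning: the paper states this proposition without a separate proof, treating it as an immediate consequence of Proposition~(\ref{All 4 dimensional subspaces of type $U^{IJ}$  have the same value of Gamma e Delta}) together with the existence of type-$U^{IJ}$ subspaces via the chain construction. You have simply spelled out the two steps (existence and invariance) and handled the degenerate case explicitly, which is entirely in line with the paper's approach.
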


We can then state the
\begin{coro} \label{matrices $C_{IJ}$ and $C_{IK}$ of an isoclinic subspace}
  Let $U \in \mathcal{IC}^{8k}$.
For the  canonical matrices one has $C_{IJ}=\bigoplus_{i=1}^k (C_{IJ})_i$ and $C_{IK}=\bigoplus_{i=1}^k (C_{IK})_i$  where the blocks are given in  (\ref{$C_{IJ}$ block of an 8 dimensional isoclinic addend})  and in  (\ref{$C_{IK}$ block of an 8 dimensional isoclinic addend}) respectively. Such matrices  depend neither on the particular decomposition of $U$ into isoclinic 8-dimensional subspaces nor  on the canonical bases chosen for each addend.
\end{coro}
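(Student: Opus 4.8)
The plan is to manufacture the canonical bases of $U$ by stacking the canonical bases of its eight--dimensional isoclinic addends, and then to read off $C_{IJ}$ and $C_{IK}$ one block at a time. First I would invoke the decomposition $U=U_1^8 \stackrel{\perp}{\oplus} \cdots \stackrel{\perp}{\oplus} U_k^8$ furnished by the preceding proposition, in which every $U_i^8\in\mathcal{IC}^8$ shares with $U$ the angles $(\theta^I,\theta^J,\theta^K)$ and, by Theorem (\ref{any X determines an 8 dimensional addend of an isoclinic subspace}) together with the Lemma of that proof, contains the whole triple of $\omega^I,\omega^J,\omega^K$--chains centered on any of its unit vectors. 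I would then choose a leading vector $X_1^{(1)}\in U_1^8$, build the chains there, pass to $(U_1^8)^\perp\cap U=U_2^8\oplus\cdots\oplus U_k^8$ (again isoclinic of dimension $8(k-1)$ with the same invariants, the complement of an addend of the relevant type being of the same type, cf. point (\ref{orthogonal complement of a type $U^{IJ}$ subspace is of type $U^{IJ}$}) of Proposition (\ref{properties of subspaces of tupe $U^{IJ}$, $U^{IK}$, $U^{JK}$})), pick $X_1^{(2)}\in U_2^8$, and iterate; equivalently one may run an induction on $k$, the base case $k=1$ being exactly Theorem (\ref{any X determines an 8 dimensional addend of an isoclinic subspace}).

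The block structure is then immediate: a chain attached to $U_i^8$ stays inside $U_i^8$, so any chain vector of the $i$--th addend is orthogonal to any chain vector of the $j$--th addend for $i\neq j$. Hence every cross--block entry of $C_{IJ}=(<X_i,Y_j>)$ and of $C_{IK}=(<X_i,Z_j>)$ vanishes and both matrices are block diagonal with $8\times 8$ diagonal blocks. To identify the $i$--th block I would note that $U_i^8$ is an eight--dimensional isoclinic subspace carrying the same $(\xi,\chi,\eta)$ (an invariant of $U$ by Proposition (\ref{invariance of xi,chi,eta of U in {IC}})) and the same $(\Gamma,\Delta)$ (Proposition (\ref{All 4 dimensional subspaces of type $U^{IJ}$  have the same value of Gamma e Delta})); running the construction of Theorem (\ref{any X determines an 8 dimensional addend of an isoclinic subspace}) inside $U_i^8$ reproduces verbatim the matrices (\ref{$C_{IJ}$ block of an 8 dimensional isoclinic addend}) and (\ref{$C_{IK}$ block of an 8 dimensional isoclinic addend}), which moreover specialize correctly when $(\Gamma,\Delta)=(1,0)$, so that $\Sigma=\sqrt{1-\Gamma^2-\Delta^2}=0$, in the orthogonal and $2$--planes decomposable cases. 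This yields $C_{IJ}=\bigoplus_{i=1}^k (C_{IJ})_i$ and $C_{IK}=\bigoplus_{i=1}^k (C_{IK})_i$ with all blocks equal to the displayed ones.

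It remains to prove independence, and I would split the two ambiguities. The freedom in the canonical bases inside each addend is harmless: by Proposition (\ref{orthogonal maps between chains belong to Sp(n)}) any two such bases are related by an element of the real representation of $Sp(n)$, which is orthogonal and therefore preserves every Gram entry, leaving each block unchanged. The freedom in the decomposition is the delicate point and is where I expect the main obstacle: a different eight--dimensional splitting might a priori permute or reshuffle the blocks. Here I would use that all diagonal blocks coincide with one and the same fixed $8\times 8$ matrix, so that the direct sum $\bigoplus_{i=1}^k (C_{IJ})_i$ is unchanged under any reordering or recutting of $U$, and likewise for $C_{IK}$; combining this with the invariance of the chains' Gram matrices from the leading vector (Proposition (\ref{independence of canonical matrices from the leading vector}), whose local four--dimensional statement propagates to the whole of $U$ precisely because of the orthogonality of the addends) shows that $C_{IJ}$ and $C_{IK}$ are bona fide invariants of $U$, depending neither on the decomposition nor on the canonical bases chosen for each addend.
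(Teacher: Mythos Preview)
Your proposal is correct and follows the same line the paper leaves implicit: the paper states this corollary without proof, treating it as an immediate consequence of the orthogonal decomposition into $8$--dimensional isoclinic addends and of the invariance of $(\xi,\chi,\eta,\Gamma,\Delta)$, and your argument is precisely the natural unpacking of that. One small refinement: for the independence of each $8\times 8$ block from the leading vector you invoke Proposition~(\ref{orthogonal maps between chains belong to Sp(n)}), which is stated for $\mathcal{IC}^4$; the paper's own justification at that point is rather that the block entries depend only on the invariants $(\xi,\chi,\eta,\Gamma,\Delta)$, so citing Proposition~(\ref{independence of canonical matrices from the leading vector}) alone (as you also do) suffices.
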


So we proved that if $\dim U= 8k$, $U$ admits an orthogonal  decomposition into 8-dimensional isoclinic subspaces $U_i$ whose angles of isoclinicity are   $\theta^I,\theta^J,\theta^K$.
According to the Theorem (\ref{main_theorem 1_ existance of canonical bases with same mutual position})  and Corollary (\ref{matrices $C_{IJ}$ and $C_{IK}$ of an isoclinic subspace}) the orbit is determined by the triple $(\theta^I,\theta^J,\theta^K)$, by the triple $(\xi,\chi,\eta)$ and by $\Delta$ (recall that $\Gamma$ is a function of $(\xi,\chi,\eta)$).


If $\dim U= 8k+2$ or  $\dim U= 8k+6$ necessarily $\xi= \pm 1, \chi= \pm 1, \eta= \xi  \cdot \chi$ and $U$ decomposes into orthogonal sum of 2 dimensional isoclinic addends $U_i$.   For any $A \in S(\mathcal{Q})$ the angle of isoclinicity  of the pair of standard 2-planes  $(U_i,AU_i)$ is the same as the one of the pair $(U,AU)$. From the Proposition (\ref{Gamma is an invariant of an isoclinic 4 dimensional subspace 1}) one has  $\Gamma=1$. The orbit in this case is determined by the triple $(\theta^I,\theta^J,\theta^K)$ and by the pair $(\xi,\chi)$.

 Finally if $\dim U= 8k+4$, then $U$ is direct orthogonal sum of isoclinic 4-dimensional subspaces $U_i$ in which case $\Gamma^2 + \Delta^2=1$. For any $A \in S(\mathcal{Q})$ the angle of isoclinicity  of the pair $(U_i,AU_i)$ is the same as the one of the pair $(U,AU)$.
 The orbit is determined by the triple $(\theta^I,\theta^J,\theta^K)$, by the triple $(\xi,\chi,\eta)$ and by the sign of $\Delta=\pm \sqrt{1-\Gamma^2}$ with $\Gamma=\Gamma(\xi,\chi,\eta)$.
Recalling that the canonical matrices are given in (\ref{$C_{IJ}$ block of an 8 dimensional isoclinic addend}) and (\ref{$C_{IK}$ block of an 8 dimensional isoclinic addend}), we can now state the final result.

\begin{teor}\label{Orbit of an isoclinic subspace}
Let  $U \in \mathcal{IC}^{2m}$. Let fix an admissible basis $(I,J,K)$  and denote by  $(\theta^I,\theta^J,\theta^K)$ the  angles of isoclinicity of the pairs $(U,IU),(U,JU),(U,KU)$ respectively. For $k \geq 0$:
\begin{itemize}
\item If $2m= 8k+2$ or  $2m= 8k+6$, $U$ is 2-planes decomposable i.e. is orthogonal sum of $U_i \in  \mathcal{IC}^2$ with same angle of isoclinicity of $U$.
In this case $(\Gamma,\Delta)=(1,0)$ and  the pair  $(\xi,\chi)=(\pm 1, \pm 1)$   determine the matrices $C_{IJ},C_{IK}$.
The $Sp(n)$-orbit is then determined by the angles $(\theta^I,\theta^J,\theta^K)$ and by the pair $(\xi,\chi)$.
\item If $2m= 8k+4$, then $U$  is orthogonal sum of $U_i \in  \mathcal{IC}^4$ with same angle of isoclinicity of $U$ and characterized by the same pair $(\Gamma,\Delta)$. In this case $\Gamma^2+\Delta^2=1$ and the canonical matrices are determined by $(\xi,\chi,\Gamma,\Delta)$.  In particular this case always occurs if $U$ is orthogonal in which case $(\Gamma,\Delta)=(1,0)$.
    The $Sp(n)$-orbit is then characterized by   $(\theta^I,\theta^J,\theta^K)$ and $(\xi,\chi,\eta,\Delta)$.
    In particular, if $\xi=\pm 1$ and $\chi=\pm 1$ we are  in the first case.
\item If $2m= 8k$ then   $U$  is orthogonal sum of $U_i \in  \mathcal{IC}^8$ with same angle of isoclinicity of $U$. The canonical matrices are determined by $(\xi,\chi,\Gamma,\Delta)$  where $\Gamma^2 + \Delta^2 \leq 1$ and the $Sp(n)$-orbit by $(\theta^I,\theta^J,\theta^K)$ and $(\xi,\chi,\eta,\Delta)$. If in particular $\Gamma^2 + \Delta^2=1$ we are in the previous case and if
     furthermore  $\xi=\pm 1$ and $\chi=\pm 1$ we are  in the first case.
\end{itemize}
\end{teor}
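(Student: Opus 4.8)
The plan is to deduce the classification entirely from the orthogonal decomposition into isoclinic atoms established above together with the orbit criterion of Theorem \ref{main_theorem 1_ existance of canonical bases with same mutual position}. By Theorem \ref{any X determines an 8 dimensional addend of an isoclinic subspace} every $U \in \mathcal{IC}^{2m}$ is an orthogonal sum of isoclinic building blocks sharing the angles $(\theta^I,\theta^J,\theta^K)$ and, by Proposition \ref{invariance of xi,chi,eta of U in {IC}} and Proposition \ref{Gamma is an invariant of an isoclinic 4 dimensional subspace 1}, the very invariants $(\xi,\chi,\eta)$ and $(\Gamma,\Delta)$ of $U$ itself. The first step is to observe that these invariants \emph{determine the dimension $d$ of the indecomposable atom}: if $\xi=\pm1$ and $\chi=\pm1$ (equivalently $U$ is $2$-planes decomposable) the atom is a standard $2$-plane and $d=2$; if instead $\Gamma^2+\Delta^2=1$ while $\xi,\chi$ are not $\pm1$, then Corollary \ref{cases when gamma= cos theta K} forces $U^{IJ}(X)=U^{IK}(X)\in\mathcal{IC}^4$ and $d=4$; in all remaining cases $\Gamma^2+\Delta^2<1$ and the atom is a genuine $U^8\in\mathcal{IC}^8$, so $d=8$.

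The second step is the arithmetic that produces the three cases. Since all atoms share the common dimension $d$ and, by Proposition \ref{properties of subspaces of tupe $U^{IJ}$, $U^{IK}$, $U^{JK}$}, the orthogonal complement in $U$ of an atom is again isoclinic of the same type, iterating yields a clean orthogonal decomposition $U=\bigoplus_i U_i$ with $\dim U_i=d$, whence $d\mid 2m$. Reading this divisibility backwards produces exactly the trichotomy. When $2m=8k+2$ or $2m=8k+6$ one has $2m\equiv 2\pmod 4$, so neither $4$ nor $8$ divides $2m$ and necessarily $d=2$, forcing $\xi,\chi=\pm1$, $\eta=\xi\chi$ and $(\Gamma,\Delta)=(1,0)$. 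When $2m=8k+4$ one has $2m\equiv 4\pmod 8$, so $d\in\{2,4\}$: the generic sub-case is $d=4$ with $\Gamma^2+\Delta^2=1$, while the degenerate sub-case $\xi,\chi=\pm1$ reverts to the first case. Finally when $2m=8k$ each of $2,4,8$ divides $2m$, so $d\in\{2,4,8\}$, the generic value $d=8$ being characterized by $\Gamma^2+\Delta^2\le 1$ and collapsing to the previous cases precisely when $\Gamma^2+\Delta^2=1$ or $\xi,\chi=\pm1$.

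The third step assembles the invariants into a complete system. In each case the canonical matrices of $U$ are the block-diagonal sums of the atomic blocks by Corollary \ref{matrices $C_{IJ}$ and $C_{IK}$ of an isoclinic subspace}, the blocks being those displayed in (\ref{$C_{IJ}$ block of an 8 dimensional isoclinic addend}) and (\ref{$C_{IK}$ block of an 8 dimensional isoclinic addend}) together with their degenerations (the $8$-block collapsing to two $4$-blocks when $\Gamma^2+\Delta^2=1$, and to $2$-blocks when $\xi,\chi=\pm1$). These matrices depend only on $(\xi,\chi,\eta,\Delta)$, since $\Gamma=\Gamma(\xi,\chi,\eta)$ by (\ref{cos phi}). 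Applying Theorem \ref{main_theorem 1_ existance of canonical bases with same mutual position}, according to which two isoclinic subspaces lie in the same $Sp(n)$-orbit iff they share the principal angles and admit standard bases with identical $C_{IJ},C_{IK}$ (which for isoclinic subspaces means identical canonical matrices), we conclude that $(\theta^I,\theta^J,\theta^K)$ together with $(\xi,\chi,\eta,\Delta)$ form a complete system of invariants, specializing to the reduced set $(\xi,\chi)$ in the first case and to $(\xi,\chi,\Gamma,\Delta)$ with $\Gamma^2+\Delta^2=1$ in the second.

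The main obstacle lies not in this final assembly, which is bookkeeping, but in the divisibility step: one must be sure that the atom dimension is \emph{globally} constant on $U$ and that the complement of an atom carries an identical type, so that $d$ is well defined and $d\mid 2m$ holds without exception. This rests on the invariance of $(\xi,\chi,\eta)$ and $(\Gamma,\Delta)$ under change of leading vector and on the stability of the subspace types $U^{IJ},U^{IK},U^{JK}$ under orthogonal complementation; granting those (already established above), both the trichotomy and the orbit conclusion follow.
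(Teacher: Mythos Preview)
Your proposal is correct and follows essentially the same route as the paper: the paper's argument (appearing in the paragraphs immediately preceding the theorem) is exactly the combination of the orthogonal decomposition from Theorem~\ref{any X determines an 8 dimensional addend of an isoclinic subspace}, the divisibility constraint forcing the atom dimension according to $2m \bmod 8$, and the block-diagonal form of the canonical matrices from Corollary~\ref{matrices $C_{IJ}$ and $C_{IK}$ of an isoclinic subspace} fed into Theorem~\ref{main_theorem 1_ existance of canonical bases with same mutual position}. Your framing in terms of an explicit ``atom dimension $d\in\{2,4,8\}$'' determined by the invariants and the consequence $d\mid 2m$ is a clean way to organize the trichotomy, but it is the same argument the paper gives in slightly less structured form.
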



\begin{thebibliography}{9999}




\bibitem{Af}
Afriat S. {\em Orthogonal and oblique projectors and the characteristics of pair of vector spaces}, Proceeding of the Cambridge Philosophical Society \textbf{53} (1957), 800-816.

\bibitem{AM}
 D.V. Alekseevsky, S. Marchiafava, {\em A report on quaternionic-like structures on a manifold}, Proceedings of the International Workshop on Differential Geometry and its Applications (Bucharest, 1993). Politehn. Univ. Bucharest Sci. Bull. Ser. A Appl. Math. Phys. 55 (1993), no. 3-4, 9-34.

\bibitem{BR1}
M.Bruni: {\em Su alcune propriet\a' di geometria euclidea ed Hermitiana in uno spazio vettoriale quaternionale}, Ann. di Mat. Pura Appl. \textbf{72} (1965), 59-77.

\bibitem{BR2}
M.Bruni: {\em Misure angolari in uno spazio vettoriale quaternionale}, Rend. di Mat. e Appl. \textbf{25} (1966), 394-401.

\bibitem{BR3}
M.Bruni: {\em Misure euclidee, hermitiane, simplettiche e potenze esterne di uno spazio vettoriale quaternionale}, Ann. Mat. Pura Appl. \textbf{78} (1971), 71-97.

\bibitem{Gala}
A. Galantai, Cs. J. Hegedus,  { \em Jordan's principal angles in complex vector spaces}. Numer. Linear Algebra  Appl. 13, (2006), 589-598

\bibitem{GR1}
A.Gray, { \em A note on manifold whose holonomy group is a subgroup of $Sp(n) \cdot Sp(1)$} Mith. Math. J. 16 (1969), 125-128.

\bibitem{GS}
S. Gutmann, L. Shepp, { \em Orthogonal bases for two subspaces with all mutual angles acute}. Indiana University Mathematics J. 27 no.1 (1978), 79-90.



\bibitem{T6}
H. J. Kang, H. Tasaki, {\em Integral geometry of real surfaces in complex projective spaces}. Tsukuba J. Math. 25 (2001), no. 1,  155-164.


\bibitem{Riz}
G.B. Rizza: {\em On the geometry of a pair of oriented planes}. Riv. Mat. Univ. Parma (6) \textbf{4} (2001), 217-228. LEVARE

\bibitem{Riz1}
G.B.Rizza, { \em On the geometry of a pair of oriented planes}. Riv. Mat. Univ. Parma (6)  4 (2001), 217-228.

\bibitem{Riz2}
G.B.Rizza, { \em Deviazione caratteristica e propriet\a' locali delle $2q$-faccette di una $V_{2n}$ a struttura complessa}. Rend. Acc. Naz. XL (1959),191-205.


\bibitem{Sa} S. M. Salamon,  { \em Quaternion-K\"{a}hler geometry}. Surveys in differential geometry: essays on Einstein manifolds, Surv. Differ. Geom., VI, Int. Press, Boston, MA (1999),  83-121.

\bibitem{SC}
K. Sharnhorst, { \em Angles in Complex Vector Spaces}. Acta Applicandae Matematicae 69 (2001), 95-103.

\bibitem{J}
Sheng Jiang, { \em Angles between Eulidean subspaces}. Geometriae Dedicata 63 (1996), 113-121.

\bibitem{T4}
H. Tasaki, {\em Generalization of K\"{a}hler angle and integral geometry, in: Complex Projective Spaces}, Steps in Differential
Geometry, Proceedings of Colloquium on Differential Geometry, Debrecen 2000,  published by
Inst. Math. Inform., Debrecen (2001), 349-361.


\bibitem{Vac}
M.Vaccaro, {\em Orbits in the real Grassmannian  of 2-planes under the action of the groups $Sp(n)$ and $Sp(n) \cdot Sp(1)$}, Bull.  Soc. Sci.
Lettres  {\L}\'{o}d\'{z} S\'{e}r. Rech. D\'{e}form.  65  (2015), no.3, December 2015. 129-148. 

\bibitem{Vacpreprint} M.Vaccaro, {\em  $Sp(n)$-orbits in the real Grassmannian $G^\R(k,4n)$}. Soon on arXiv 



\bibitem{articolo_2} M.Vaccaro, {\em  $Sp(n)$-orbits in the real Grassmannian $G^\R(k,4n)$ of complex and $\Sigma$-complex subspaces of an Hermitian quaternionic vector space}. Soon on arXiv 



\bibitem{MV} M.Vaccaro: {\em    Subspaces of a para-quaternionic
Hermitian vector space},  Int. J. Geom. Methods Mod. Phys. (7) \textbf{8}  (2011), 1487-1506.


\bibitem{MV1}
M.Vaccaro: {\em Basics of linear para-quaternionic geometry I:
Hermitian para-type structure on a real vector space},
Bull. Soc. Sci. Lettres  {\L}\'{o}d\'{z} S{\'{e}}r. Rech. D{\'{e}}form (1)
\textbf{61} (2011), 17-30.


\bibitem{MV2}
M.Vaccaro: {\em Basics of linear para-quaternionic geometry II:
Decomposition of a generic subspace of a para-quaternionic
Hermitian vector space}, Bull. Soc. Sci. Lettres  {\L}\'{o}d\'{z} S{\'{e}}r. Rech. D{\'{e}}form (2)
\textbf{61} (2011), 15-32.




\bibitem{YIU} P. Yiu {\em Maximal Normal Sets of n-Planes in $\R^{2n}$ }


\bibitem{ZK}
P.Zhu, A.V. Knyazev: {\em Perincipal angles between subspaces and their tangents},
Journal of Numerical Mathematics. 2013, 21(4) 325-340 (arXiv:1209.0523)

\end{thebibliography}
\end{document}